\font\tencmmib=cmmib10 \skewchar\tencmmib '60
\def\lessim{\ \lower4pt\hbox{$
		\buildrel{\displaystyle <}\over\sim$}\ }
\def\gessim{\ \lower4pt\hbox{$\buildrel{\displaystyle >}
		\over\sim$}\ }
\def\Bla{\Big{\langle}}
\def\Bra{\Big{\rangle}}
\def\bla{\langle}
\def\bra{\rangle}
\def\la{\langle}
\def\ra{\rangle}
\newcommand{\e}{\mathbb{E}}
\newcommand{\p}{\mathbb{P}}
\newcommand{\indi}{\ensuremath{\boldsymbol 1}}
\newcommand{\Crt}{\mathop{\mathrm{Crt}}\nolimits}
\newtheorem{lemma}{\bf Lemma}
\newtheorem{theorem}{\bf Theorem}
\newtheorem{remark}{\bf Remark}
\newtheorem{proposition}{\bf Proposition}
\newenvironment{Proof of lemma}{\noindent{\bf Proof of Lemma}}{\hfill$\Box$\newline}
\newenvironment{Proof of theorem}{\noindent{\bf Proof of Theorem}}{\hfill{\footnotesize${\square}$}\newline}
\newenvironment{Proof of theorems}{\noindent{\bf Proof of Theorems}}{\hfill$\Box$\newline}
\newenvironment{Proof of proposition}{\noindent{\bf Proof of Proposition}}{\hfill$\Box$\newline}
\newenvironment{Proof of propositions}{\noindent{\bf Proof of Propositions}}{\hfill$\Box$\newline}
\newenvironment{Proof of exercise}{\noindent{\it Proof of Exercise:}}{\hfill$\Box$}
\begin{document}
	
	\nocite{*} 
	
	\title{Parisi formula, disorder chaos and fluctuation for\\ 
		the ground state energy in the spherical mixed $p$-spin models}
	
	\author{Wei-Kuo Chen\thanks{School of Mathematics, University of Minnesota. Email: wkchen@umn.edu.}
		\and
		Arnab Sen\thanks{School of Mathematics, University of Minnesota. Email: arnab@umn.edu}
	}
	\maketitle
	
	\begin{abstract}
		
		We show that the limiting ground state energy of the spherical mixed $p$-spin model can be identified as the infimum of certain variational problem. This complements the well-known Parisi formula for the limiting free energy in the spherical model. As an application, we obtain explicit formulas for the limiting ground state energy in the replica symmetry, one level of replica symmetry breaking and full replica symmetry breaking phases at zero temperature. In addition, our approach leads to new results on disorder chaos in spherical  mixed  even $p$-spin models. In particular, we prove that when there is no external field, the location of the ground state energy is chaotic under small perturbations of the disorder. We also establish that in the spherical mixed even $p$-spin model, the ground state energy superconcentrates in the absence of external field, while it obeys a central limit theorem if the external field is present. 
	\end{abstract}
	
	\tableofcontents
	
	\section{Introduction}
	For $N\geq 1,$ let $S_N=\{\sigma\in\mathbb{R}^N:\sum_{i=1}^N\sigma_i^2=N\}$ be the sphere of radius $\sqrt{N}$. The Hamiltonian of the spherical mixed $p$-spin model is defined as
	\begin{align*}
	H_N(\sigma)&=X_N(\sigma)+h\sum_{i=1}^N\sigma_i,   \quad \forall\sigma \in S_N,
	\end{align*} 
	where $X_N$ is the following centered Gaussian process indexed by $S_N$,
	\begin{align}\label{def:X_N}
	X_N(\sigma)&=\sum_{p\geq 2}\frac{\gamma_p}{N^{(p-1)/2}}\sum_{1\leq i_1,\ldots,i_{p}\leq N}g_{i_1,\ldots,i_{p}}\sigma_{i_1}\cdots\sigma_{i_{p}}
	\end{align}
	for i.i.d.\ standard Gaussian random variables $g_{i_1,\ldots,i_p}$ for $1\leq i_1,\ldots,i_{p}\leq N$ and $p\geq 2$.  Here, $h$ denotes the strength of the external field and the sequence $(\gamma_p)$ stands for the mixture parameter that is assumed to decay fast enough, for instance, $\sum_{p\geq 2}2^p\gamma_p^2<\infty,$ such that the infinite sum in \eqref{def:X_N} converges a.s.  
    This allows us to compute
	$$
	\e X_N(\sigma^1)X_N(\sigma^2)=N\xi(R(\sigma^1,\sigma^2)),
	$$
	where $$\xi(s):=\sum_{p\geq 2}\gamma_p^2s^{p}$$ and $$
	R(\sigma^1,\sigma^2):=\frac{1}{N}\sum_{i=1}^N\sigma_i^1\sigma_i^2$$ 
	is the overlap between any two spin configurations $\sigma^1$ and $\sigma^2.$ An important case of $\xi$ is the spherical mixed even $p$-spin model, i.e., $\gamma_p=0$ for all odd $p\geq 3.$ The spherical Sherrington-Kirkpatrick (SK) model corresponds to $\xi(s)=s^2/2$. In contrast, the {\it Ising} mixed $p$-spin model is defined essentially in the same way only now the configuration space $S_N$ is replaced by $\Sigma_N=\{-1,+1\}^N.$
	
	The aim of this article is to investigate the behavior of the ground state energy
	$$L_N:=\max_{\sigma\in S_N}H_N(\sigma)$$
	and the ground state
	$$
	\sigma^*:=\mbox{argmax}_{\sigma\in S_N}H_N(\sigma)
	$$
	in the thermodynamic limit $N\rightarrow\infty$. Our main results consist of three major parts. First, we establish a variational formula for the limit of the scaled ground state energy. This limit should be seen as the analogue of the Parisi formula (see \eqref{CS:eq1} below) at zero temperature. As a consequence, we extend the fundamental concepts of replica symmetry and replica symmetry breaking for the limiting free energy to the ground state energy and we obtain explicit and simple expressions for this limit in the replica symmetry, one level of replica symmetry breaking and full replica symmetry breaking phases. Second, we prove disorder chaos for the ground state in any spherical mixed even $p$-spin models regardless of the presence or absence of the external field. Third, we show that the variance of $L_N$ is sublinear for any spherical mixed even $p$-spin models in the absence of external field, $h=0.$ This establishes superconcentration of the ground state energy. In the case that the external field is present, $h\neq 0,$ we further obtain a central limit theorem for the ground state energy. These results will be discussed in detail in the following three subsections.

	\subsection{The Parisi formula for the ground state energy}\label{sub1.1}
	
	For any $\beta>0$, define the free energy  by
	$$
	F_N(\beta)=\frac{1}{N}\log Z_N(\beta),
	$$
	where $Z_N(\beta)$ is the partition function defined as
	\begin{align}
	\label{more:eq5}
	Z_N(\beta)=\int_{S_N}\exp\bigl( \beta H_N(\sigma)\bigr)m_N(d\sigma)
	\end{align} for $m_N$ the uniform probability measure on $S_N$. Here, the parameter $\beta$ is called the {\it inverse} temperature. We say that the model is at {\it positive} temperature if $\beta<\infty$ and is at {\it zero} temperature if $\beta=\infty.$ Set 
	\begin{align*}
	\xi_\beta=\beta^2\xi\,\,\mbox{and}\,\,h_\beta=\beta h.
	\end{align*} 
	Let $\mathcal{M}$ be the space of all distribution functions $x$ on $[0,1]$ with $x(\hat{q})=1$ for some $\hat{q}<1.$ For any $x\in\mathcal{M}$ and $b\in\mathbb{R}$ satisfying
	\begin{align}\label{eq3}
	b>\max\Big\{1,\int_0^1\xi_\beta''(s)x(s)ds\Big\},
	\end{align}
	define the Parisi functional by
	\begin{align}\label{parisi}
	\mathcal{P}_\beta(x,b)=\frac{1}{2}\Bigl(\frac{h_\beta^2}{b-d_\beta^x(0)}+\int_0^1\frac{\xi_\beta''(q)}{b-d_\beta^x(q)}dq+b-1-\log b-\int_0^1q\xi_\beta''(q)x(q)dq\Bigr),
	\end{align}
	where $d_\beta^x(q):=\int_q^1\xi_\beta''(s)x(s)ds.$ The famous Parisi formula for the limiting free energy of the spherical  mixed-$p$ spin model says that
	\begin{align}\label{prop3:proof:eq0}
	F(\beta):=\lim_{N\rightarrow\infty}\e F_N(\beta)&=\inf \mathcal{P}_\beta(x,b),
	\end{align}
	where the infimum is over all pairs $(x,b)$ that satisfy \eqref{eq3} and is uniquely achieved by some pair $(x_{\beta},b_{\beta})$. 
	This formula was initially proved in the case of the mixed even $p$-spin models by Talagrand \cite{Tal06}. The general situation was handled by Chen \cite{Chen13}.
	Throughout this paper, we call the probability measure $\mu_\beta$ induced by $x_\beta$ the Parisi measure. In physics literature (see e.g. Mezard-Parisi-Virasoro \cite{MPV}), the system is called replica symmetric if $\mu_\beta$ is a Dirac measure, $k$ replica symmetry breaking if it is an atomic measure with exactly $k+1$ atoms and full replica symmetry breaking otherwise. We refer the readers to Talagrand \cite{Tal06} and Auffinger-Chen \cite{AChen13} for some examples of Parisi measures.
	
	 Alternatively, the Parisi formula admits a simpler expression, discovered by Crisanti-Sommers \cite{CS}. For any $x\in\mathcal{M}$, set
	$$
	\hat{x}(q)=\int_q^1x(s)ds,\,\,\forall q\in[0,1].
	$$
	Define the Crisanti-Sommers functional by
	\begin{align}\label{csf}
	\mathcal{Q}_\beta(x)&=\frac{1}{2}\Bigl(\int_0^1(\xi_\beta'(q)+h_\beta^2)x(q)dq+\int_{0}^{\hat{q}}\frac{dq}{\hat{x}(q)}+\log(1-\hat{q})\Bigr),
	\end{align}
	where $\hat{q}\in[0,1)$ satisfies $x(\hat{q})=1$. Note that this functional is well-defined as $\mathcal{Q}_\beta(x)$ is independent of the choice of $\hat{q}.$ The Parisi formula can also be written as
		\begin{align}
		\label{CS:eq1}
		F(\beta)&=\inf_{x\in\mathcal{M}}\mathcal{Q}_\beta(x).
		\end{align}
		Here the minimizer is uniquely attained by the minimizer $x_\beta$ of \eqref{prop3:proof:eq0} (see \cite{Tal06}).

		It is well-known that the limiting ground state energy exists and can be computed through  the limiting free energy (see Lemma \ref{lem:discrete} below), as
			\begin{align}\label{eq:EquationGSdefinition}
			GS:=\lim_{N\rightarrow\infty}\frac{ L_N}{N}=\lim_{\beta\rightarrow\infty}\frac{F(\beta)}{\beta},
			\end{align}
			where the first limit above exists in $L^1$ and a.s. However, it is far from clear that one can obtain a meaningful expression for the limit on the right-hand side of \eqref{eq:EquationGSdefinition} from the intricate variational problems  \eqref{prop3:proof:eq0} and \eqref{CS:eq1}.  This is the content of our first main result. We obtain an analogue of the Parisi formula for the limiting ground state energy via the Crisanti-Sommers representation \eqref{CS:eq1}. It is described as follows.
		
		 Denote by $\mathcal{N}$ the collection of all nonnegative nondecreasing and right-continuous functions on $[0,1)$.
		 Let 
		 \begin{align}
		 \label{more:eq10}
		 		 \mathcal{K}:=\Big\{(L,\alpha)\in (0,\infty)\times\mathcal{N}:L>\int_0^1\alpha(s)ds\Bigr\}.
		 \end{align}
		 For any $(L,\alpha)\in\mathcal{K}$, define 
	$$
	\mathcal{Q}(L,\alpha):=\frac{1}{2}\Bigl((\xi'(1)+h^2)L-\int_0^1\xi''(q)\Bigl(\int_0^q\alpha(s)ds\Bigr)dq+\int_0^1\frac{dq}{L-\int_0^q\alpha(s)ds}\Bigr).
	$$
	Note that $\mathcal{Q}$ defines a strictly convex functional on the convex space $\mathcal{K}$, but $\mathcal{K}$ is not compact. Below is our main result.
	
	\begin{theorem}[Parisi formula for the ground state energy]
		\label{add:thm1}
		We have that
		\begin{align}
		\label{add:thm1:eq1}
		GS&=\min_{(L,\alpha)\in\mathcal{K}}\mathcal{Q}(L,\alpha),
		\end{align}
		where the minimizer is unique and is given by the pair $(L_0,\alpha_0)\in\mathcal{K}$ for 
		\begin{align}
		\begin{split}\label{add:eq6}
		L_0&:=\lim_{\beta\rightarrow\infty}\int_0^1\beta x_\beta(s)ds,\\
		\alpha_0&:=\lim_{\beta\rightarrow\infty}\beta x_\beta\,\,\mbox{vaguely on $[0,1)$}.
		\end{split}
		\end{align} 		
	\end{theorem}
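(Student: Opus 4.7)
My plan is to derive Theorem \ref{add:thm1} by taking the zero-temperature limit $\beta\to\infty$ in the Crisanti-Sommers representation \eqref{CS:eq1}, using $GS=\lim_\beta F(\beta)/\beta$ from \eqref{eq:EquationGSdefinition}. The guiding observation is that under the substitution $\alpha(q):=\beta x(q)$ on $[0,1)$, an integration by parts in the linear term of $\mathcal{Q}_\beta$ produces $\frac{1}{2}(\xi'(1)+h^2)L-\frac{1}{2}\int_0^1\xi''(q)(\int_0^q\alpha)\,dq$ (matching the first two pieces of $\mathcal{Q}(L,\alpha)$), while the entropy pair $\frac{1}{2\beta}(\int_0^{\hat q}dq/\hat x(q)+\log(1-\hat q))$ limits to $\frac{1}{2}\int_0^1 dq/(L-\int_0^q\alpha)$ once $L$ is chosen to absorb both $\int_0^1\alpha$ and the mass $\beta(1-\hat q)$ concentrated in the flat portion $x\equiv 1$ on $[\hat q,1]$.

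For the upper bound $GS\le\inf_\mathcal{K}\mathcal{Q}$, I fix $(L,\alpha)\in\mathcal{K}$ with $\alpha$ bounded (the general case then follows by monotone approximation). Define $\hat q_\beta\in(0,1)$ implicitly by $\beta(1-\hat q_\beta)=L-\int_0^{\hat q_\beta}\alpha(s)\,ds$ and the trial cdf $x^{(\beta)}\in\mathcal{M}$ equal to $\alpha/\beta$ on $[0,\hat q_\beta)$ and to $1$ on $[\hat q_\beta,1]$. This choice is engineered so that $\beta\hat x^{(\beta)}(q)=L-\int_0^q\alpha$ for all $q\le\hat q_\beta$; a direct computation based on the identity above then gives $\mathcal{Q}_\beta(x^{(\beta)})/\beta\to\mathcal{Q}(L,\alpha)$, where the extra piece $\beta\int_{\hat q_\beta}^1(\xi'(q)+h^2)dq\to(L-\int_0^1\alpha)(\xi'(1)+h^2)$ supplies the missing contribution after integration by parts and $\beta^{-1}\log(1-\hat q_\beta)\to 0$ since $1-\hat q_\beta=O(1/\beta)$. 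Combined with $F(\beta)/\beta\le\mathcal{Q}_\beta(x^{(\beta)})/\beta$, this delivers $GS\le\mathcal{Q}(L,\alpha)$.

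For the lower bound and the identification, I set $\alpha_\beta:=\beta x_\beta$ and $L_\beta:=\beta\hat x_\beta(0)$. The same manipulation expresses $\mathcal{Q}_\beta(x_\beta)/\beta$ in the shape of $\mathcal{Q}(L_\beta,\alpha_\beta)$, except that the denominator integral is cut off at $\hat q_\beta$ and a harmless $\beta^{-1}\log(1-\hat q_\beta)$ residue appears. I then establish compactness of $\{(L_\beta,\alpha_\beta)\}$, extract a subsequential limit $(L_0,\alpha_0)$, and verify the strict inequality $\int_0^1\alpha_0<L_0$ (equivalently, $\liminf\beta(1-\hat q_\beta)>0$, so that nontrivial mass escapes to $q=1$), since otherwise $\mathcal{Q}(L_0,\alpha_0)=+\infty$ would contradict the finiteness of $GS$. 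Term-by-term passage to the limit then gives $\mathcal{Q}(L_0,\alpha_0)\le GS$, and combining with the upper bound yields equality in \eqref{add:thm1:eq1}. Strict joint convexity of $\mathcal{Q}$ on $\mathcal{K}$---whose second variation $\int_0^1 2(\Delta L-\int_0^q\Delta\alpha)^2/(L-\int_0^q\alpha)^3\,dq$ vanishes only for $(\Delta L,\Delta\alpha)=0$---forces uniqueness of the minimizer and hence pins down the full (not just subsequential) limit in \eqref{add:eq6}.

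The principal technical challenge is the compactness step and ruling out the degenerate case $\int_0^1\alpha_0=L_0$. Uniform boundedness of $L_\beta$, control of $\alpha_\beta(q)$ on compact subsets of $[0,1)$, and a strict lower bound on $\liminf\beta(1-\hat q_\beta)$ all rely on the Euler-Lagrange/stationarity properties of the Parisi minimizer $x_\beta$ (for $\mathcal{Q}_\beta$, or equivalently for the Parisi functional $\mathcal{P}_\beta$ together with its companion parameter $b_\beta$), which is where the model-specific spherical structure enters the argument.
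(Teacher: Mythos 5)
Your proposal follows the same overall route as the paper: take the zero-temperature scaling of the Crisanti--Sommers functional, build a trial function $x^{(\beta)}$ to prove the upper bound $GS\le\inf_{\mathcal{K}}\mathcal{Q}$, run Fatou on $\mathcal{Q}_\beta(x_\beta)/\beta$ along a vaguely-convergent subsequence of $\beta x_\beta$ to get the matching lower bound, and invoke strict convexity of $\mathcal{Q}$ for uniqueness and the identification of $(L_0,\alpha_0)$. The upper-bound construction and the convexity argument are essentially the paper's.

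The genuine gap is in your proposed mechanism for ruling out the degenerate case $\int_0^1\alpha_0 = L_0$. You argue that this would force $\mathcal{Q}(L_0,\alpha_0)=+\infty$ and hence contradict finiteness of $GS$. This implication is false: $L_0-\int_0^q\alpha_0\to 0$ as $q\to 1$ does \emph{not} force $\int_0^1 dq/\bigl(L_0-\int_0^q\alpha_0\bigr)$ to diverge. The a priori envelope $\alpha_0(q)\lesssim 1/(1-q)$ coming from the estimate $\beta x_\beta(q)\le C_\xi/(\xi(1)-\xi(q))$ is fully compatible with, say, $L_0-\int_0^q\alpha_0\asymp(1-q)^{1/2}$, for which the entropy integral is finite. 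In that scenario your Fatou bound still gives $GS\ge\mathcal{Q}(L_0,\alpha_0)$ and one can check (approximating $L_0$ from above) that $\inf_{\mathcal{K}}\mathcal{Q}\le\mathcal{Q}(L_0,\alpha_0)$, so the variational identity $GS=\inf_{\mathcal{K}}\mathcal{Q}$ would survive, but the infimum would no longer be attained inside $\mathcal{K}$ and the claimed uniqueness and identification of the minimizer would collapse. So the contradiction you propose cannot close the argument.

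What the paper actually does at this point is not a finiteness argument but a stationarity argument. It first records the Euler--Lagrange conditions for the \emph{Parisi functional} $\mathcal{P}_\beta(x,b)$ (not $\mathcal{Q}_\beta$), namely \eqref{add:thm1:proof:eq2}--\eqref{add:thm1:proof:eq-1}, which involve the companion scalar $b_\beta$. From \eqref{add:thm1:proof:eq-1} one gets the explicit expression $b_\beta=\xi_\beta'(1)-\xi_\beta'(q_\beta)+1/(1-q_\beta)$, so that $\delta_0:=\lim_n\beta_n(1-q_{\beta_n})=0$ would force $b_{\beta_n}/\beta_n\to\infty$; plugging this into the stationarity identity \eqref{add:thm1:proof:eq2} evaluated at $q=q_{\beta_n}$ then yields $q_{\beta_n}\to 0$, contradicting $q_{\beta_n}\to 1$. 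That is the content of Lemma~\ref{add:lem0}, and it is precisely the place where the extra variable $b$ of $\mathcal{P}_\beta$ is indispensable, even though the rest of the proof lives inside the Crisanti--Sommers picture. Your outline correctly flags that stationarity must be used somewhere, but the specific route you sketch ($\mathcal{Q}=\infty$) does not work, and you would need to reproduce the paper's $b_\beta$-based contradiction (or find an alternative argument) to get $\delta_0>0$, along with the uniform a priori bounds on $\beta x_\beta$ and $\int\beta x_\beta$ (paper's Lemmas~\ref{add:lem} and~\ref{add:lem2}, obtained from Gaussian integration by parts, differentiability of $F(\beta)$, and the Dudley entropy bound) that justify the compactness you invoke.
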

	
	Here the existence of the last two limits is part of the main result. 
	
	\begin{remark} \label{rm1}\rm  The vague convergence of $(\beta x_\beta)_{\beta>0}$ on $[0,1)$ in \eqref{add:eq6} means that $\lim_{\beta\rightarrow\infty}\beta x_\beta(s)=\alpha_0(s)$ at all points of continuity of $\alpha_0$ on $[0,1)$. It is equivalent to the statement that $$\lim_{\beta\rightarrow\infty}\int_0^1f(s)d(\beta x_\beta)=\int_0^1f(s)d\alpha_0$$ for all continuous functions $f$ on $[0,1)$ with compact support. Likewise, for any nonnegative sequence $(\beta_n)_{n\geq 1}$ with $\lim_{n\rightarrow\infty}\beta_n=\infty$, we define the vague convergence of $(\beta_nx_{\beta_n})_{n\geq 1}$ on $[0,1)$ in the same fashion.
	\end{remark} 
	
	\begin{remark}
		\rm Recall that $GS$ depends on the mixture parameter $(\gamma_p)_{p\geq 2}$ and the external field $h$.  In \cite{Chen16}, it was known that from the Parisi formula \eqref{add:thm1:eq1}, $GS$ is partially differentiable in each $\gamma_p$ and $h$,
		\begin{align*}
		\partial_{h}GS&=hL_0,\\
		\partial_{\gamma_p}GS&=p\gamma_p\Bigl(L_0-\int_0^1\alpha_0(s)ds+\int_0^1\alpha_0(s)s^{p-1}ds\Bigr).
		\end{align*} 
		As a consequence, the magnetization and the pure $p$-spin Hamiltonians of the ground state $\sigma^*$ converge, 
		\begin{align*}
		&\p\Bigl(\Bigl|\frac{\sum_{i=1}^N\sigma_i^*}{N}-\partial_hGS\Bigr|\geq \varepsilon\Bigr)\leq Ke^{-N/K},\\
		&\p\Bigl(\Bigl|\frac{H_{N,p}(\sigma^*)}{N}-\partial_{\gamma_p}GS\Bigr|\geq \varepsilon\Bigr)\leq Ke^{-N/K}
		\end{align*} 
		for all $N\geq 1$, where $K$ is a positive constant independent of $N.$ Note that the first inequality was established in \cite{Chen16}. One may adapt exactly the same argument to derive the second one.
	\end{remark}

	Theorem \ref{add:thm1} suggests one way of constructing the minimizer of \eqref{add:thm1:eq1}, but it remains very difficult to compute $(L_0,\alpha_0)$ as one needs the precise expression of the Parisi measures at any positive temperature. Using the strict convexity of $\mathcal{Q}$, we establish a direct characterization for the minimizer of \eqref{add:thm1:eq1} that avoids taking $\beta\rightarrow\infty.$

	\begin{theorem}\label{add:thm:char} 
		Let $(L,\alpha)\in\mathcal{K}$. Define 
		\begin{align}\label{add:thm:char:eq3}
		g(u)&=\int_u^1\bar{g}(s)ds,\,\,\forall u\in[0,1],
		\end{align}
		where
		\begin{align}\label{add:thm:char:eq2}
		\bar{g}(s):=\xi'(s)+h^2-\int_{0}^s\frac{dq}{\bigl(L-\int_0^q\alpha(r)dr\bigr)^2},\, \,\,\forall s\in[0,1].
		\end{align}
		Then $(L,\alpha)$ is the minimizer of \eqref{add:thm1:eq1} if and only if the following equation holds,
		\begin{align}\label{add:thm:char:eq1}
		\xi'(1)+h^2=\int_0^1\frac{dq}{\bigl(L-\int_0^q\alpha(r)dr\bigr)^2}
		\end{align}
		and the function $g$ satisfies $\min_{u\in [0,1]}g(u)\geq 0$ and $\nu(S)=\nu([0,1))$, where $\nu$ is the measure induced by $\alpha$, i.e., $\nu([0, s]) = \alpha(s)$ for all $ s\in [0,1)$ and $$S:=\{u\in[0,1):g(u)=0\}.$$
	\end{theorem}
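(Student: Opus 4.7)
The plan is to derive the stated conditions as the first-order Karush--Kuhn--Tucker conditions for the strictly convex minimization of $\mathcal{Q}$ over $\mathcal{K}$. Strict convexity of $\mathcal{Q}$ on the convex set $\mathcal{K}$ (the first two terms being linear in $(L,\alpha)$, the third a composition of the affine map $(L,\alpha)\mapsto L-\int_0^{\cdot}\alpha(s)ds$ with the strictly convex $1/x$) makes stationarity both necessary and sufficient for being a minimizer, and delivers uniqueness for free. Every integrand appearing below involves $1/(L-A(q))^k$ with $L-A(q)\ge L-\int_0^1\alpha(s)ds>0$ uniformly on $[0,1]$, so differentiation under the integral and Fubini swaps are routine; I suppress this bookkeeping.

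Write $A(q):=\int_0^q\alpha(r)dr$. Partial differentiation in $L$ gives
$$\partial_L\mathcal{Q}(L,\alpha)=\frac{1}{2}\Bigl[\xi'(1)+h^2-\int_0^1\frac{dq}{(L-A(q))^2}\Bigr],$$
so stationarity in $L$ (free on the open half-line $(\int_0^1\alpha(s)ds,\infty)$) is exactly \eqref{add:thm:char:eq1}. For the $\alpha$-variation, identify $\alpha\in\mathcal{N}$ with the nonnegative Borel measure $\nu$ on $[0,1)$ via $\alpha(s)=\nu([0,s])$, and consider admissible perturbations $\nu\mapsto\nu+\epsilon\mu$. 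A direct computation yields
$$\left.\frac{d}{d\epsilon}\right|_{\epsilon=0}\mathcal{Q}\bigl(L,\alpha+\epsilon\mu([0,\cdot])\bigr)=\frac{1}{2}\int_0^1\mu([0,s])\Bigl[\xi'(s)-\xi'(1)+\int_s^1\frac{dq}{(L-A(q))^2}\Bigr]ds.$$
Using the $L$-stationarity \eqref{add:thm:char:eq1} the bracket simplifies to $\bar g(s)$ from \eqref{add:thm:char:eq2}, and a Fubini rewrite turns the derivative into $\tfrac{1}{2}\int_{[0,1)}g(r)\,d\mu(r)$ with $g$ as in \eqref{add:thm:char:eq3}.

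The KKT conditions for the cone constraint $\nu\ge 0$ now split as follows. Varying $\mu$ over arbitrary nonnegative measures forces $g\ge 0$ on $[0,1)$; since $\bar g$ is continuous and \eqref{add:thm:char:eq1} gives $g(1)=0$, this upgrades to $\min_{[0,1]}g\ge 0$. Taking $\mu=-\nu|_A$ for any Borel $A\subset[0,1)$ then forces $\int_A g\,d\nu\le 0$, which combined with $g\ge 0$ yields $g=0$ $\nu$-a.e., i.e.\ $\nu(S)=\nu([0,1))$. Conversely, these two conditions plus \eqref{add:thm:char:eq1} make every admissible directional derivative of $\mathcal{Q}$ nonnegative at $(L,\alpha)$, so by convexity $(L,\alpha)$ is the (unique) global minimizer.

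The main obstacle is really just the bookkeeping: correctly pairing perturbations of the nondecreasing function $\alpha$ with signed measures, and arranging the Fubini rewrite so that the function $g$ of \eqref{add:thm:char:eq3} emerges naturally; the rest follows from convexity.
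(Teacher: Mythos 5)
Your proof is correct and follows essentially the same route as the paper's: compute the first-order directional derivative of $\mathcal{Q}$ at $(L,\alpha)$, use Fubini to rewrite it as $\tfrac12\int g\,d(\nu'-\nu)$, and then extract the two optimality conditions by suitable choices of the perturbing measure, with the converse following from convexity. The only cosmetic difference is that you separate the $L$-variation (yielding stationarity, i.e.\ \eqref{add:thm:char:eq1}) from the $\alpha$-variation before substituting, whereas the paper varies the pair $(L_\theta,\alpha_\theta)$ jointly and then splits the resulting inequality; the computations and the measure-theoretic perturbation argument (adding an arbitrary nonnegative $\nu''$, then restricting $\nu$ to a sublevel set of $g$) are the same.
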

	
	\begin{remark}
		\rm
		Our approach of Theorems \ref{add:thm1} and \ref{add:thm:char} adapts the Crisanti-Sommers expression \eqref{CS:eq1}. Following a similar argument presented in this paper, the results analogous to Theorems 1 and 2 might be obtained  by utilizing the Parisi formula \eqref{prop3:proof:eq0}. Nonetheless, the corresponding variational representation for the maximal energy will involve an additional variable $b$ similar to the one appearing in the functional $\mathcal{P}_\beta(x,b).$
	\end{remark}

		Theorem \ref{add:thm:char} allows us to extend the regions of replica symmetry and replica symmetric breaking for Parisi measures at {\it positive} temperature to {\it zero} temperature. Let $(L_0,\alpha_0)$ be the minimizer of \eqref{add:thm1:eq1} and $\nu_0$ be the measure induced by $\alpha_0.$ Analogous to Parisi's formulation, we say that the model {\it at zero temperature} is replica symmetric if $\nu_0=0$, $k$ levels of replica symmetry breaking if $\nu_0$ is an atomic measure with $k$ atoms and full replica symmetry breaking otherwise. The proposition below characterizes the region of replica symmetric minimizers.
		%

	
	\begin{proposition}
		\label{add:prop1}
	The model is replica symmetric at zero temperature if and only if 
	\begin{align}
	\label{add:prop1:Eq1}
	\xi'(1)+h^2\geq\xi''(1).
	\end{align} In this case, the minimizer $(L_0,\alpha_0)$ of \eqref{add:thm1:eq1} is given by $L_0=(\xi'(1)+h^2)^{-1/2}$ and $\alpha_0=0$. Furthermore,
			\begin{align}\label{GS:eq1}
			GS&=(\xi'(1)+h^2)^{1/2}.
			\end{align}
	\end{proposition}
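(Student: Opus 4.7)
The plan is to test the ansatz $\alpha\equiv 0$ in the characterization given by Theorem \ref{add:thm:char}, identify when it produces a valid minimizer of \eqref{add:thm1:eq1}, and invoke uniqueness of the minimizer to get the ``if and only if''. Since $\nu=0$ when $\alpha\equiv 0$, the last requirement $\nu(S)=\nu([0,1))$ in Theorem \ref{add:thm:char} is automatic, so only two conditions remain: the stationarity equation \eqref{add:thm:char:eq1} and the nonnegativity $g\geq 0$ on $[0,1]$.

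First I would solve \eqref{add:thm:char:eq1} under $\alpha\equiv 0$. The right-hand side collapses to $\int_0^1 L^{-2}\,dq = L^{-2}$, so the equation forces $L=L_0:=(\xi'(1)+h^2)^{-1/2}$. With this choice, a direct computation gives
\begin{align*}
\bar{g}(s) &= \xi'(s) + h^2 - \frac{s}{L_0^2} = \xi'(s) - s(\xi'(1)+h^2) + h^2.
\end{align*}
Note the boundary values $\bar{g}(0)=h^2\geq 0$ and $\bar{g}(1)=0$, and that $\bar{g}$ is convex on $[0,1]$, since $\bar{g}''(s)=\xi'''(s)=\sum_{p\geq 3}p(p-1)(p-2)\gamma_p^2 s^{p-3}\geq 0$.

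Next I would use this convexity to show that $g\geq 0$ on $[0,1]$ is equivalent to $\bar{g}'(1)\leq 0$, which, since $\bar{g}'(1)=\xi''(1)-(\xi'(1)+h^2)$, is equivalent to the inequality \eqref{add:prop1:Eq1}. The forward implication is the key step: if $\xi'(1)+h^2\geq \xi''(1)$, the convex function $\bar{g}$ lies above its tangent at $s=1$, namely $\bar{g}(s)\geq \bar{g}'(1)(s-1)\geq 0$ for $s\in[0,1]$ (product of two nonpositive quantities), and then $g(u)=\int_u^1\bar{g}(s)\,ds\geq 0$. The converse is equally short: if $\xi'(1)+h^2<\xi''(1)$ then $\bar{g}'(1)>0$ together with $\bar{g}(1)=0$ forces $\bar{g}$ to be negative on some interval $(1-\varepsilon,1)$, and consequently $g(u)<0$ for $u$ close to $1$, so no pair $(L,0)$ satisfies the characterization and Theorem \ref{add:thm1} then gives $\alpha_0\neq 0$.

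Finally, under \eqref{add:prop1:Eq1}, uniqueness in Theorem \ref{add:thm1} pins down $(L_0,\alpha_0)=((\xi'(1)+h^2)^{-1/2},0)$, and I would evaluate the Parisi functional directly:
\begin{align*}
GS = \mathcal{Q}(L_0,0) = \frac{1}{2}\Bigl((\xi'(1)+h^2)L_0 + \frac{1}{L_0}\Bigr) = (\xi'(1)+h^2)^{1/2},
\end{align*}
giving \eqref{GS:eq1}. No real obstacle is anticipated; the only conceptual point is the convexity argument that reduces the infinite-dimensional condition $g\geq 0$ on $[0,1]$ to the single scalar inequality $\bar{g}'(1)\leq 0$ at the endpoint, which in turn is the threshold \eqref{add:prop1:Eq1}.
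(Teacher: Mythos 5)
Your proof is correct and follows essentially the same route as the paper's: plug $\alpha\equiv 0$ into the characterization of Theorem \ref{add:thm:char}, solve the stationarity condition \eqref{add:thm:char:eq1} for $L$, and then verify (or refute) the nonnegativity of $g$. The only cosmetic difference is that the paper shows $\bar g\ge 0$ by observing that $\bar g'(s)=\xi''(s)-(\xi'(1)+h^2)\le 0$ (monotonicity of $\xi''$) while you show it by convexity of $\bar g$ and the tangent at $s=1$ — but these rest on the same fact, $\xi'''\ge 0$. Your observation that the support condition $\nu(S)=\nu([0,1))$ is automatic when $\nu=0$, so one need not worry about whether $S$ is empty, is actually slightly cleaner than the paper's wording and handles the degenerate SK case (where $\bar g\equiv 0$) without fuss.
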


		The assumption $\xi'(1)+h^2\geq\xi''(1)$ in Proposition \ref{add:prop1} is equivalent to  
		\begin{align*}
				h^2\geq\sum_{p\geq 3}(p^2-2p)\gamma_p^2.
		\end{align*}
		The equality here plays the role of the famous de Almeida-Thouless line \cite{AT} at zero temperature. 
		If \eqref{add:prop1:Eq1} is violated and $\xi''(s)^{-1/2}$ is concave on $(0,1]$, we further obtain:

	\begin{proposition}
		\label{add:prop1.5}
			If $\xi'(1)+h^2<\xi''(1)$ and $\xi''(s)^{-1/2}$ is concave on $(0,1]$, then the model is full replica symmetry breaking at zero temperature. In this case, the minimizer $(L_0,\alpha_0)$ of \eqref{add:thm1:eq1} is given by $L_0=\xi''(q_0)^{-1/2}$ and 
			\begin{align*}
			\alpha_0(s)&=\left\{
			\begin{array}{ll}
			0,&\mbox{if $q\in[0,q_0)$},\\
			\frac{\xi'''(s)}{2\xi''(s)^{3/2}},&\mbox{if $q\in[q_0,1)$}
			\end{array}
			\right.
			\end{align*}
			and the ground state energy is equal to
			\begin{align}\label{GS:eq2}
			GS&=q_0\xi''(q_0)^{1/2}+\int_{q_0}^1 \xi''(q)^{1/2}dq,
			\end{align}
			where the quantity $q_0\in[0,1]$ is the unique solution to 
			\begin{align}
			\label{GS:eq3}
			\xi'(q_0)+h^2=q_0\xi''(q_0).
			\end{align}
	\end{proposition}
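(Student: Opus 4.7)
The plan is to verify that the candidate pair $(L_0,\alpha_0)$ proposed in the statement satisfies the optimality conditions of Theorem \ref{add:thm:char}, and then to compute $\mathcal{Q}(L_0,\alpha_0)$ directly. Strict convexity of $\mathcal{Q}$ on $\mathcal{K}$ will then ensure the candidate is the unique minimizer. To begin, I would establish existence of $q_0\in[0,1)$ solving \eqref{GS:eq3} via the intermediate value theorem applied to $\phi(q):=q\xi''(q)-\xi'(q)-h^2$: one has $\phi(0)=-h^2\leq 0$, $\phi(1)=\xi''(1)-\xi'(1)-h^2>0$ by hypothesis, and $\phi'(q)=q\xi'''(q)\geq 0$. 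The concavity hypothesis on $\xi''(s)^{-1/2}$ amounts to $\frac{\xi'''(s)}{2\xi''(s)^{3/2}}$ being nondecreasing on $(0,1]$, so the proposed $\alpha_0$ lies in $\mathcal{N}$.

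Next, an application of the fundamental theorem of calculus to the antiderivative $-\xi''(\cdot)^{-1/2}$ gives the piecewise identity
\[
L_0-\int_0^q \alpha_0(r)\,dr=\begin{cases}\xi''(q_0)^{-1/2}, & q\in[0,q_0],\\ \xi''(q)^{-1/2}, & q\in[q_0,1],\end{cases}
\]
which confirms $(L_0,\alpha_0)\in\mathcal{K}$ since the right-hand side equals $\xi''(1)^{-1/2}>0$ at $q=1$. Squaring and integrating yields $\int_0^1 dq/(L_0-\int_0^q\alpha_0)^2=q_0\xi''(q_0)+\xi'(1)-\xi'(q_0)$, which equals $\xi'(1)+h^2$ exactly by \eqref{GS:eq3}, verifying \eqref{add:thm:char:eq1}. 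Using \eqref{add:thm:char:eq2}, one computes piecewise that $\bar g\equiv 0$ on $[q_0,1]$ (by direct cancellation), while on $[0,q_0]$ one finds $\bar g(s)=\xi'(s)+h^2-s\xi''(q_0)$ satisfying $\bar g(q_0)=0$ by \eqref{GS:eq3} and $\bar g'(s)=\xi''(s)-\xi''(q_0)\leq 0$ since $\xi''$ is nondecreasing; hence $\bar g\geq 0$ on $[0,q_0]$, and consequently $g\geq 0$ on $[0,1]$ with $g\equiv 0$ on $[q_0,1]$. Since $\nu_0$ is supported in $[q_0,1)\subseteq S$, the support condition $\nu_0(S)=\nu_0([0,1))$ follows.

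Finally, substituting these pieces into $\mathcal{Q}$ and using the identity $h^2+\xi'(q_0)=q_0\xi''(q_0)$ to cancel the cross-terms involving $\xi'(1)-\xi'(q_0)$ produces
\[
2\mathcal{Q}(L_0,\alpha_0)=2q_0\xi''(q_0)^{1/2}+2\int_{q_0}^1\xi''(q)^{1/2}\,dq,
\]
which is \eqref{GS:eq2}. The model is full replica symmetry breaking because the concavity of $\xi''(s)^{-1/2}$ furnishes a nontrivial continuous part of $\nu_0$ on $(q_0,1)$, preventing $\nu_0$ from being a finite combination of point masses; uniqueness of $q_0$ then follows from uniqueness of the minimizer of \eqref{add:thm1:eq1}. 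The main obstacle is the careful piecewise verification that $g\geq 0$ globally on $[0,1]$, which depends delicately on the monotonicity of $\xi''$ and on the smooth match of $\bar g$ at the transition point $q_0$ engineered by the defining relation \eqref{GS:eq3}.
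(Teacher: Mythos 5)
Your proposal is correct and follows essentially the same route as the paper's own proof: it verifies the characterization of the minimizer from Theorem \ref{add:thm:char} for the proposed pair $(L_0,\alpha_0)$ and then substitutes into $\mathcal{Q}$ to extract \eqref{GS:eq2}. One small remark: your formula $\bar g(s)=\xi'(s)+h^2-s\,\xi''(q_0)$ on $[0,q_0]$ is the one that actually follows from \eqref{add:thm:char:eq2} with the constant value $L_0-\int_0^q\alpha_0=\xi''(q_0)^{-1/2}$ there (the paper's displayed $\xi'(s)+h^2-s\,\xi''(s)$ appears to be a misprint, though it leads to the same sign conclusion), and your indirect uniqueness argument for $q_0$ via uniqueness of the minimizer is a little more roundabout than the paper's, which simply notes that the hypothesis forces $\gamma_p>0$ for some $p\geq 3$ and hence $q\xi''(q)-\xi'(q)$ is strictly increasing.
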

	
	Consider the spherical pure $p$-spin model for $p\geq 3$ in the absence of external field, i.e., $\xi(s)=s^p/p$ and $h=0.$ It is easy to see that \eqref{add:prop1:Eq1} does not hold and $\xi''(s)^{-1/2}$ is convex on $(0,1].$ In this case, we show that the model is $1$-replica symmetry breaking.
	
	\begin{proposition}\label{add:prop2}
		Assume that $\xi(s)=s^p/p$ for $p\geq 3$ and $h=0.$ Then the model is $1$-replica symmetry breaking at zero temperature. Here the minimizer $(L_0,\alpha_0)$ of \eqref{add:thm1:eq1} is  given by $L_0=(z\delta)^{1/2}+(z^{-1}\delta)^{1/2}$ and $\alpha_0=(z\delta)^{1/2}$ for $\delta:=z(1+z)^{-1}$ and
		\begin{align}
		\label{add:prop2:eq1}
		GS&=\frac{1}{\sqrt{z+1}}\Bigl(1+\frac{z}{p}\Bigr),
		\end{align}
		where $z>0$ is the unique solution to 
	\begin{align}\label{add:prop2:eq2}
	\frac{1}{p}&=\frac{1+z}{z^2}\log (1+z)-\frac{1}{z}.
	\end{align}
	\end{proposition}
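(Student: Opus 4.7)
The plan is to apply Theorem~\ref{add:thm:char} directly with a $1$-RSB ansatz. Positing that $\alpha_0 \equiv a$ is constant on $[0,1)$, so $\nu_0 = a\delta_0$, and writing $b := L - a$, the integral $\int_0^1 (L-aq)^{-2}\,dq = 1/(bL)$ reduces the saturation condition \eqref{add:thm:char:eq1} (with $\xi'(1)=1$ and $h=0$) to $bL = 1$. Introducing $z := a/b$ then gives $b = 1/\sqrt{1+z}$, $a = z/\sqrt{1+z}$, $L = \sqrt{1+z}$, precisely the form stated in the proposition upon setting $\delta = z/(1+z)$. Because $\nu_0$ is an atom at $0$, the remaining support condition $\nu(S) = \nu([0,1))$ of Theorem~\ref{add:thm:char} simplifies to $g(0) = 0$, i.e., $\int_0^1 \bar{g}(s)\,ds = 0$.

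A direct computation gives $\bar{g}(s) = s^{p-1} - s/[L(a(1-s)+b)]$, and integrating and using $bL = 1$ and $La = z$ shows that $g(0) = 0$ is exactly \eqref{add:prop2:eq2}. For the uniqueness of $z>0$, I would prove that $F(z) := (1+z)\log(1+z)/z^2 - 1/z$ is strictly decreasing from $F(0^+) = 1/2$ down to $F(\infty) = 0$. This follows from $F'(z) = [2z - (z+2)\log(1+z)]/z^3$ together with the observation that its numerator $G(z) := 2z - (z+2)\log(1+z)$ satisfies $G(0) = G'(0) = 0$ and $G''(z) = -z/(1+z)^2 < 0$, hence $G < 0$ on $(0,\infty)$. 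Since $1/p \in (0, 1/2)$ for every $p \geq 3$, \eqref{add:prop2:eq2} admits a unique positive solution $z$.

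The hard part will be verifying the remaining condition $\min_{u\in[0,1]} g(u) \geq 0$ in Theorem~\ref{add:thm:char}. My plan is to factor $\bar{g}(s) = s\cdot r(s)$ with $r(s) = s^{p-2} - [La(1-s)+1]^{-1}$ and study the sign of the numerator $N(s) := (La+1)s^{p-2} - La\,s^{p-1} - 1$. From $N'(s) = s^{p-3}[(La+1)(p-2) - La(p-1)s]$, $N$ is unimodal on $[0,1]$ with $N(0) = -1$ and $N(1) = 0$. Provided $La = z > p-2$ --- which I would confirm via the elementary equivalent inequality $p\log(p-1) > 2(p-2)$ (i.e.\ $F(p-2) > 1/p$) --- the unimodality forces $N$ to cross zero exactly once in $(0,1)$ from below. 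Consequently $\bar{g}$ switches from negative to positive; since $g'(u) = -\bar{g}(u)$ and $g(0) = g(1) = 0$, the function $g$ is increasing then decreasing on $[0,1]$, so $g \geq 0$ and vanishes on $[0,1)$ only at the origin. This closes the verification of Theorem~\ref{add:thm:char}. Finally, substituting $(L_0, \alpha_0)$ into $\mathcal{Q}$, the three terms evaluate to $L$, $a(p-1)/p$, and $\log(1+z)/a$, respectively; using the identity $(1+z)\log(1+z)/z^2 = 1/p + 1/z$ from \eqref{add:prop2:eq2} collapses these to $GS = (1 + z/p)/\sqrt{1+z}$, matching \eqref{add:prop2:eq1}.
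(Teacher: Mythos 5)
Your proposal is correct and verifies exactly the conditions of Theorem~\ref{add:thm:char}, but it departs from the paper's proof in three ways worth noting. First, you \emph{derive} the parametrization $L=\sqrt{1+z}$, $a=z/\sqrt{1+z}$, $b=1/\sqrt{1+z}$ from the saturation condition $bL=1$ and the normalization $z=a/b$, whereas the paper simply states $(L_0,\alpha_0)$ in the form $(z\delta)^{1/2}+(z^{-1}\delta)^{1/2}$, $(z\delta)^{1/2}$ and checks \eqref{add:thm:char:eq1} by direct substitution; the two are algebraically identical (your $a,b$ are $(z\delta)^{1/2}$ and $(z^{-1}\delta)^{1/2}$). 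Second, you supply a proof of uniqueness of the solution $z>0$ to \eqref{add:prop2:eq2} (showing $F$ decreases strictly from $1/2$ to $0$ via the sign of $G''(z)=-z/(1+z)^2$), a point the paper leaves implicit. Third, and most substantively, your verification of $\min_{u}g(u)\ge 0$ differs: you establish $z>p-2$ (reduced to the elementary inequality $p\log(p-1)>2(p-2)$, which indeed holds for all $p\ge 3$ by convexity), so that the critical point $s^*=(z+1)(p-2)/(z(p-1))$ of $N$ lies in $(0,1)$; unimodality of $N$ together with $N(0)=-1$, $N(1)=0$ then forces exactly one sign change, hence $g$ increases then decreases with $g(0)=g(1)=0$. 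The paper instead argues by contradiction with Rolle's theorem: if $g<0$ somewhere, the polynomial $a(u)=-N(u)$ would have three zeros in $(0,1]$, giving $a'$ two zeros in $(0,1)$, yet $a'(v)=0$ has a unique positive root; this avoids pinning down whether $z>p-2$. Your route is somewhat more explicit (it locates where $\bar g$ changes sign) at the cost of an auxiliary estimate, while the paper's Rolle argument is shorter and self-contained. The final evaluation of $\mathcal{Q}$ via the identity $(1+z)\log(1+z)/z^2=1/p+1/z$ matches the paper's computation.
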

	
	A word of comment is needed here. The ground state energy for the spherical pure $p$-spin model without external field was previously studied in Auffinger-Ben Arous-\v{C}ern\'{y} \cite{ACB} through the complexity of the local minima of the Hamiltonian. Equation \eqref{add:prop2:eq1} matches (with different normalization) the constant $E_0(p)$ defined in \cite[Theorem 2.12]{ACB}. When $p\geq 7$, the correct order of the fluctuation of the ground state energy was obtained in Subag-Zeitouni \cite{SZ}. One of the crucial ingredients in their work is the concentration of the number of critical points of $H_N$ that was conjectured in Auffinger-Ben Arous \cite{ABA13} and verified in Subag \cite{S}. When combined with the results of \cite{ACB}, Proposition  \ref{add:prop1.5} implies that such concentration phenomenon does not hold for all spherical mixed $p$-spin models. More precisely, for any $u\in \mathbb R$, define the
	(random) number
	$\Crt_{N,0}(u)$ of local minima of the
	function $H_{N}$ below the level $Nu$ as
	%
	\begin{equation*}
	\label{defWk} \Crt_{N,0}(u) = \sum_{\sigma: \nabla
		H_{N}(\sigma) = 0 }
	\indi\bigl\{ H_{N}(\sigma) \leq Nu \bigr\} \indi\bigl\{ i
	\bigl(\nabla^2 H_{N}(\sigma)\bigr) = 0\bigr\}.
	\end{equation*}
	Here $\nabla$, $\nabla^2$ are the gradient and the Hessian restricted to
	$S_{N}$, and $i(\nabla^2 H_{N}(\sigma))$ is the
	number of negative eigenvalues
	of the Hessian $\nabla^2 H_{N}$, called the index of the Hessian at
	$\sigma$.
	In \cite{ACB, ABA13}, the following limit was established for any spherical mixed  $p$-spin model: 
	$$  \lim_{N \to \infty}\frac{1}{N} \log \mathbb E \Crt_{N,0}(u) = \Theta_0(u),$$ 
	where $\Theta_0(u)$, called the complexity of local minima, is an increasing function defined on an interval of the form $(-\infty, -E_\infty)$ for some $E_\infty >0$. This function has a unique zero denoted by $-E_0$ and it only depends on the model through the values of $\xi(1)$, $\xi'(1)$ and $\xi''(1)$. It was observed in \cite{ABA13} that if for some $\epsilon >0$ and $u \in(-E_0, -E_0+\epsilon)$, 
	\begin{equation}\label{eq:concentrationforcomplexity}
	 \lim_{N \to \infty}\frac{\Crt_{N,0}(u)}{\mathbb E \Crt_{N,0}(u)} = 1 \quad \text{a.s.},
	\end{equation}
  then 
	$
	E_0 = GS.
	$
	Proposition \ref{add:prop1.5} now implies that \eqref{eq:concentrationforcomplexity} does not hold if  $\xi'(1)<\xi''(1)$ and $\xi''(s)^{-1/2}$ is concave on $(0,1]$,  since the ground state energy is equal to 
		\begin{align*}
		GS&=\int_{0}^1 \xi''(q)^{1/2}dq
		\end{align*}
	 and this quantity is not always determined by just $\xi(1)$, $\xi'(1)$ and $\xi''(1)$. This conclusion illustrates that the complexity of certain spherical mixed $p$-spin models does not concentrate. It contrasts to the common assumption of self-averaging in several physics papers, see e.g. Crisanti-Leuzzi-Rizzo \cite{CLR}, Crisanti-Leuzzi \cite{CL2004} and Kurchan-Parisi-Virasoro\cite{KPV}.
	%
	%
	
	\smallskip
	\smallskip

		{\noindent \bf Open Problem.} {\rm The proof of Theorem \ref{add:thm1} relies heavily on the Parisi formula at positive temperature established in \cite{Chen13,Tal06}. It would be of great interest to see whether the methodologies of \cite{Chen13,Tal06} can be used to give a direct proof of Theorem~\ref{add:thm1}.}

	\subsection{Chaos in disorder for the ground state}
	
	Chaos in disorder is concerned with the	phenomenon that in some spin glass models, a small perturbation to the disorder will result in a drastic change to the overall energy landscape.  Over the past decades, this subject has received a lot of attention in physics community, see Bray-Moore \cite{BM87}, Fisher-Huse \cite{FH86}, Kr\c{z}aka\l{}a-Bouchaud \cite{KB05} for physics literature and Rizzo \cite{R2009} for an up-to-date survey. Recently, several mathematical results on chaos in disorder for the overlap at positive temperature are also made available: Chatterjee \cite{Chatt13} obtained disorder chaos for the Ising mixed even $p$-spin models  without external field and Chen \cite{Chen12} carried out the situation when the external field is present and extended the results to some Ising mixed $p$-spin models allowing odd $p$-spin interactions, see \cite{Chen150}. More recently, chaos in disorder is also obtained in the spherical mixed even $p$-spin model by Chen-Hsieh-Hwang-Sheu \cite{Chen151}. 
	
	Our result here establishes chaos in disorder for the ground state overlap at zero temperature. Assume that the Gaussian part $X_N$ of the Hamiltonian $H_N$ is even, i.e., $\gamma_p=0$ for all odd $p\geq 3.$ Similar to the formulation of the research works mentioned above, we consider two i.i.d. copies $X_N^1$ and $X_N^2$ of $X_N.$ Set two spherical mixed even $p$-spin Hamiltonians,
	\begin{align}
	\begin{split}
	\label{hamilton}
	H_{N,t}^1(\sigma)&=\sqrt{t}X_N(\sigma)+\sqrt{1-t}X_N^1(\sigma)+h\sum_{i=1}^N\sigma_i,\\
	H_{N,t}^2(\tau)&=\sqrt{t}X_N(\tau)+\sqrt{1-t}X_N^2(\tau)+h\sum_{i=1}^N\tau_i,
	\end{split}
	\end{align}
	where $t\in[0,1]$ is called the coupling parameter. In other words, the two systems differ by the independent Gaussian Hamiltonians $X_N^1$ and $X_N^2$.
	Let $\sigma_t^*$ and $\tau_t^*$ be the ground states of $H_{N,t}^1(\sigma)$ and $H_{N,t}^2(\tau)$ over $S_N$, respectively, i.e., 
	\begin{align}\label{more:eq4}
	\sigma_t^*=\mbox{argmax}_{\sigma\in S_N}H_N^1(\sigma)\,\,\mbox{and}\,\,\tau_t^*=\mbox{argmax}_{\tau\in S_N}H_N^2(\tau).
	\end{align} 
    Note that $X_{N},X_{N,t}^1,X_{N,t}^2$ are linear combinations of independent even $p$-spin interactions. If $h=0,$ then each $H_{N,t}^j$ for $j=1,2$ has two optimizers and they are different by a minus sign, while for $h\neq 0,$ each $H_{N,t}^j$ has a unique maximizer. If $t=1$, then the two systems are identically the same and the overlap has the relation that $|R(\sigma_t^*,\tau_t^*)|=1$ if $h=0$ and $R(\sigma_t^*,\tau_t^*)=1$ if $h\neq 0.$

	If now the two systems are decoupled, $0<t<1$, we show that this behavior will change drastically in such a way that the ground state overlap $R(\sigma_t^*,\tau_t^*)$  is essentially concentrated around zero if the external field is absent. In the presence of external field, we prove that this overlap is concentrated near a constant $u_t\in (0,q_0)$ for some $q_0\in[0,1].$ More precisely, let $(L_0,\alpha_0)$ be the minimizer of \eqref{add:thm1:eq1}. Define $q_0=\min\mbox{supp}\alpha_0$ if $\mbox{supp}\alpha_0\neq \emptyset$ and $q_0=1$ if $\mbox{supp}\alpha_0=\emptyset.$ Our main result is stated in the following theorem.

	\begin{theorem}[Chaos in disorder for ground states]\label{thm4}
		Consider the spherical mixed even $p$-spin coupled Hamiltonian \eqref{hamilton}.
		For any $0<t<1,$  there exists some $u_t\in[0,1)$ such that for any $\varepsilon\in(0,1),$
		\begin{align*}
		\lim_{N\rightarrow\infty}\p(|R(\sigma_t^*,\tau_t^*)-u_t|\geq \varepsilon)=0,
		\end{align*}	
		where $u_t$ satisfies the equation 
		$$
		L_0^2\bigl(t\xi'(u_t)+h^2\bigr)=u_t
	    $$ and it has the property that $u_t=0$ if $h=0$ and $u_t\in(0,q_0)$ if $h\neq 0.$
	\end{theorem}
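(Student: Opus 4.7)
My plan is to deduce this ground-state chaos statement from the positive-temperature disorder chaos result of Chen-Hsieh-Hwang-Sheu \cite{Chen151} by sending $\beta \to \infty$, in the same spirit in which Theorem \ref{add:thm1} is obtained from the Crisanti-Sommers formula \eqref{CS:eq1}. The result of \cite{Chen151} tells us that for each fixed $\beta < \infty$ and $0 < t < 1$, the overlap $R(\sigma, \tau)$ under the two-replica Gibbs measure on $S_N \times S_N$ associated to $\beta(H^1_{N,t} + H^2_{N,t})$ concentrates around a deterministic value $u_{t,\beta} \in [0, 1)$ described through the Parisi measure $\mu_\beta$; moreover, $u_{t,\beta} = 0$ whenever $h = 0$, by the $\sigma \mapsto -\sigma$ symmetry of the even $p$-spin Hamiltonian.

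The most direct route to the theorem runs through the overlap-constrained maximum energy
\begin{align*}
M_N(u) := \frac{1}{N}\max_{\sigma, \tau \in S_N:\ R(\sigma, \tau) = u}\bigl[H^1_{N,t}(\sigma) + H^2_{N,t}(\tau)\bigr], \qquad u \in [-1, 1].
\end{align*}
The goal is to show that $\lim_N \e M_N(u) = M(u)$ exists, satisfies $M(u) \le 2 \cdot GS$, and attains equality uniquely at some $u = u_t$. An upper bound for $\e M_N(u)$ is obtained via Guerra-Talagrand interpolation for the coupled system, with an extra Lagrange multiplier $\lambda$ enforcing the overlap constraint, and then passing $\beta \to \infty$ through the Crisanti-Sommers representation as in the derivation of Theorem \ref{add:thm1}. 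A matching lower bound comes from the Parisi formula of \cite{Chen13, Tal06} applied to the restriction of the coupled Gibbs measure to a thin shell $\{R(\sigma, \tau) \approx u\}$, followed by $\beta \to \infty$.

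Stationarity of the resulting zero-temperature functional in $\lambda$ forces $u_t$ to satisfy $L_0^2(t\xi'(u_t) + h^2) = u_t$, with $(L_0, \alpha_0)$ the minimizer from Theorem \ref{add:thm1}. Strict convexity of $\mathcal{Q}$ on $\mathcal{K}$ translates into strict concavity of $M$ at $u_t$ and thereby yields $M(u) < 2 \cdot GS$ for $u \ne u_t$. Combining this with $N^{-1}(H^1_{N,t}(\sigma^*_t) + H^2_{N,t}(\tau^*_t)) \to 2 \cdot GS$ (each $H^j_{N,t}$ has the same law as $H_N$) and Gaussian concentration of $M_N(u)$ around $\e M_N(u)$ uniform in $u$ after a discretization of $[-1,1]$, one concludes that $R(\sigma^*_t, \tau^*_t)$ lies in any prescribed $\varepsilon$-neighborhood of $u_t$ with probability $1 - o(1)$. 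The stated properties of $u_t$ are then read off the fixed-point equation: when $h = 0$ the only solution of $L_0^2 t\xi'(u) = u$ on $[0,1)$ is $u_t = 0$, since $\xi'(0) = 0$ and $t < 1$; when $h \ne 0$ the right-hand side is strictly positive at $u = 0$ while the analogous identity with $t$ replaced by $1$ would give exactly $u = q_0$ (a consequence of Theorem \ref{add:thm:char}), so monotonicity in $t$ places $u_t$ strictly between $0$ and $q_0$.

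The main obstacle will be extending the zero-temperature variational principle of Theorem \ref{add:thm1} to the overlap-constrained two-replica system: rigorously identifying $M(u)$ as the value of an explicit zero-temperature functional over an enlarged parameter space (analogous to $\mathcal{K}$ but with the Lagrange multiplier $\lambda$ included), and verifying strict concavity at $u_t$. Pushing the Crisanti-Sommers $\beta \to \infty$ analysis through for the coupled system while keeping track of $\lambda$ is where most of the new technical work lies; the alternative of taking $\beta_N \to \infty$ directly inside \cite{Chen151} would face the difficulty that the concentration rate there is not quantitative in $\beta$.
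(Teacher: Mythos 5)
Your overall strategy is aligned with the paper's: bound the overlap-constrained maximal coupled energy via a two-replica Guerra--Talagrand interpolation with a Lagrange multiplier, send $\beta \to \infty$, show the bound is strictly below $2\,GS$ off $u_t$, then conclude by concentration and the optimality of $(\sigma_t^*,\tau_t^*)$. However, there are two structural deviations that are more than cosmetic and one gap.

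First, you propose to \emph{identify} $M(u)$ by proving a matching lower bound coming from "the Parisi formula applied to the restriction of the coupled Gibbs measure to a thin shell $\{R(\sigma,\tau)\approx u\}$." No such lower bound is available: the Parisi formula of \cite{Chen13, Tal06} is for the unconstrained free energy, and the lower bound for a constrained (coupled) free energy is not a corollary of it. The paper deliberately avoids this and uses \emph{only} the Guerra--Talagrand upper bound (Proposition \ref{prop4}, taken from \cite{Chen151}). The point is that no lower bound is needed: one only has to show $\sup_{|v-u_t|\ge \varepsilon/2} E(t,v,\Lambda(v)) < 2\,GS$ for a suitable choice of $\Lambda$; since the unconstrained maximum already equals $2\,GS$, this forces the overlap of the optimizers to be close to $u_t$.

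Second, the mechanism you invoke — "strict convexity of $\mathcal{Q}$ translates into strict concavity of $M$ at $u_t$" — does not match the paper's argument and is not an obvious consequence; $M$ is not identified as a convex/concave variational quantity in $u$. The paper instead fixes $\lambda=0$, computes $E(t,u,0)=2\,GS$ and $\partial_\lambda E(t,u,0)=f_t(u)$ for $|u|\le q_0$ (Proposition \ref{lem1}), Taylor-expands $E$ in $\lambda$ to second order, and chooses $\Lambda(u)=-\eta f_t(u)/C$ to obtain the strict drop $E(t,u,\Lambda(u)) \le 2\,GS - c\,f_t(u)^2$; for $|u|>q_0$ it shows a separate strict inequality at $\lambda=0$ (Proposition \ref{lem-1}) exploiting the nonvanishing of $D(q)-D_u(q)$ when $t\in(0,1)$. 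This Lagrange-multiplier trick is the heart of the argument and is missing from your sketch.

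Two further remarks. (i) You would use the Crisanti--Sommers functional $\mathcal{Q}_\beta$ for the coupled bound; the paper explicitly switches to the Parisi functional $\mathcal{P}_\beta$ in Section \ref{sec:dis} (the two-dimensional Guerra--Talagrand bound is formulated in terms of $\mathcal{P}_\beta$), and remarks that a two-dimensional Crisanti--Sommers analogue would require "extra effort" and is not pursued. (ii) The passage $\beta\to\infty$ uniformly in $u$ is delicate near $|u|=1$: the vague convergence $\beta x_\beta \to \alpha_0$ only holds on $[0,1)$, so the paper restricts to $|v|\le 1-\kappa/2$ (Lemmas \ref{lem:dis_correlated_field}, \ref{extralem}, Remark \ref{rm3}) and only removes the cutoff at the end; your proposal does not address this. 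Finally, the claim that $u_t=0$ is the unique root when $h=0$ "since $\xi'(0)=0$ and $t<1$" is too quick; the paper needs the convexity of $f_t$ on $[0,q_0]$, the concavity on $[-q_0,0]$, and the identity $L_0^2\,\xi'(q_0)=q_0$ (Lemma \ref{lem4}) to rule out other roots.
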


		The above theorem says that the ground state overlap $R(\sigma_t^*,\tau_t^*)$ is nearly a constant value $u_t$, or equivalently, the distance between the ground states $\sigma_t^*$ and $\tau_t^*$ is around $\sqrt{2(1-u_t)}$ under the normalized Euclidean distance. In the absence of external field, this distance is essentially $\sqrt{2}$, while if the external field is present, it is at least $\sqrt{2(1-q_0)}$, where this lower bound is positive for some examples of the spherical mixed $p$-spin models as illustrated by Proposition \ref{add:prop1.5}. In contrast to the situation that $\sigma_t^*=\tau_t^*$ when $t=1$, these illustrate that, as long as the two systems are decoupled, $t\in (0,1)$, we immediately witness the strict separation of the ground states by a positive distance independent of the choice of $0<t<1.$ This confirms the chaotic nature of the spherical mixed even $p$-spin model under perturbations of the disorder. We remark that in the special case of the spherical SK model with no external field, the maximizers $\sigma_t^*$ and $\tau_t^*$ actually correspond to the first eigenvectors of two correlated $N\times N$ GOE matrices in distribution, in which case an upper bound for the second moment of the ground state overlap that deduces chaos in disorder was obtained by Chatterjee \cite{Chatt13}. 
		
		\subsection{Fluctuation of the ground state energy}

		Finally we  discuss the fluctuation of the ground state energy in the spherical mixed $p$-spin models. In the case of the spherical SK model without external field, i.e., $\xi(s)=s^2/2$ and $h=0$, the ground state energy has a simple alternative description, ${L_N}/N = \lambda_{\max}/2$ in distribution, where $\lambda_{\max}$ is  the maximum eigenvalue of an $N \times N$ GOE matrix. The classical result in random matrix theory says that the normalized $L_N$ converges weakly to the GOE Tracy-Widom Law. Recently, for the spherical pure $p$-spin model  with $p\geq 7$ and $h=0,$ Subag and Zeitouni \cite{SZ} showed that the centered $L_N$ converges to a Gumbel distribution. However, beyond these cases, the result on the fluctuation of the ground state energy for the general spherical mixed $p$-spin model is relatively scarce. Our first main result here says that for any spherical mixed even $p$-spin model with no external field, the ground state energy $L_N$  superconcentrates, which means that the variance of $L_N$ is of a smaller order than the one obtained through the Poincar\'{e} inequality, $\mathrm{Var}(L_N)= O(N)$.

		\begin{theorem}[Superconcentration]\label{thm2}
			For any mixed even $p$-spin model, if there is no external field, then
			\begin{align}
			\label{thm2:eq1}
			\lim_{N\rightarrow\infty}\frac{1}{N}\text{Var}(L_N)=0.
			\end{align}	
		\end{theorem}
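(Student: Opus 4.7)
The plan is to combine a Gaussian interpolation identity with the disorder chaos result Theorem~\ref{thm4}. Specifically, I would establish the exact formula
\begin{align}\label{key:id}
\text{Var}(L_N) \;=\; N\int_0^1 \e\bigl[\xi(R(\sigma_t^*,\tau_t^*))\bigr]\,dt,
\end{align}
where $\sigma_t^*,\tau_t^*$ are the ground states of the coupled Hamiltonians \eqref{hamilton} with $h=0$. Once \eqref{key:id} is in hand, chaos will force the integrand to zero pointwise in $t$, and dominated convergence will finish the proof.

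To derive \eqref{key:id}, set $\phi(t):=\e[L^1_{N,t}L^2_{N,t}]$. At $t=0$ the two Hamiltonians have independent driving disorders, so $\phi(0)=(\e L_N)^2$; at $t=1$ they coincide as processes, so $\phi(1)=\e[L_N^2]$. Hence $\text{Var}(L_N)=\int_0^1\phi'(t)\,dt$. To compute $\phi'(t)$ I would first smooth each maximum by the corresponding free energy
$$
\tilde F^j_{N,t,\beta}\;:=\;\frac{1}{\beta}\log\int_{S_N}\exp\bigl(\beta H^j_{N,t}(\sigma)\bigr)\,dm_N(\sigma),\qquad j=1,2,
$$
which is a smooth function of the underlying Gaussians. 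The joint covariance of $(H^1_{N,t},H^2_{N,t})$ depends on $t$ only through the cross term $\e[H^1_{N,t}(\sigma)H^2_{N,t}(\tau)]=tN\xi(R(\sigma,\tau))$, while the functional derivative $\partial_{H^j_{N,t}(\sigma)}\tilde F^j_{N,t,\beta}$ is the Gibbs density at $\sigma$. A Gaussian integration by parts then yields
$$
\frac{d}{dt}\e\bigl[\tilde F^1_{N,t,\beta}\tilde F^2_{N,t,\beta}\bigr]\;=\;N\,\e\bigl[\la\xi(R(\sigma,\tau))\ra_\beta\bigr],
$$
where $\la\cdot\ra_\beta$ denotes the expectation against the product Gibbs measure. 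Integrating in $t$ from $0$ to $1$ and then sending $\beta\to\infty$ at fixed $N$, the Gibbs measures concentrate on their respective ground states and \eqref{key:id} emerges.

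Granting \eqref{key:id}, the theorem follows quickly. By Theorem~\ref{thm4}, for each $t\in(0,1)$ the chaos constant $u_t$ equals $0$ when $h=0$, so $R(\sigma_t^*,\tau_t^*)\to 0$ in probability as $N\to\infty$. Because $\xi$ is continuous on $[-1,1]$ with $\xi(0)=0$ and $|\xi(s)|\leq\xi(1)$ there, it follows that $\e[\xi(R(\sigma_t^*,\tau_t^*))]\to 0$ for every fixed $t\in(0,1)$. Since the integrand in \eqref{key:id} is pointwise bounded by $\xi(1)$ uniformly in $N$ and $t$, bounded convergence yields $\frac{1}{N}\text{Var}(L_N)\to 0$, which is exactly \eqref{thm2:eq1}.

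The main technical obstacle will be justifying the $\beta\to\infty$ passage in the interpolation identity: one must verify, at fixed $N$, both that $\e[\tilde F^1_{N,t,\beta}\tilde F^2_{N,t,\beta}]\to\e[L^1_{N,t}L^2_{N,t}]$ uniformly enough in $t$ to commute with $\int_0^1$, and that $\la\xi(R(\sigma,\tau))\ra_\beta\to \xi(R(\sigma_t^*,\tau_t^*))$ in $L^1$. The former should follow from the standard bound $0\leq L^j_{N,t}-\tilde F^j_{N,t,\beta}=o_\beta(1)$ uniformly in $t$ (via a spherical-cap estimate for the partition function) combined with Lipschitz Gaussian concentration of each factor. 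The latter is the routine statement that the Gibbs measure concentrates on the argmax set as $\beta\to\infty$; the $\pm$ ambiguity in the $h=0$ case is harmless because $\xi$ is an even function, so $\xi(R(\pm\sigma_t^*,\pm\tau_t^*))=\xi(R(\sigma_t^*,\tau_t^*))$.
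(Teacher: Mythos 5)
Your proposal is correct and follows essentially the same route as the paper: the paper's Lemma~\ref{id}(i) is precisely the identity $\mathrm{Var}(L_N)=N\int_0^1\e\,\xi(R(\sigma_t^*,\tau_t^*))\,dt$, derived by applying a Gaussian interpolation/integration-by-parts covariance formula to $\log Z_N(\beta)$, dividing by $\beta^2$, and sending $\beta\to\infty$ (using evenness of $\xi$ to handle the $\pm$ degeneracy of ground states when $h=0$), and the paper then concludes exactly as you do via Theorem~\ref{thm4} with $u_t=0$ and bounded convergence. The only cosmetic difference is that you phrase the interpolation through $\phi(t)=\e[L^1_{N,t}L^2_{N,t}]$ and its derivative, whereas the paper invokes the Chatterjee-style covariance identity directly at finite $\beta$.
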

		
		The limit \eqref{thm2:eq1} means that the variance of $L_N$ is of a sublinear order. It would be particularly interesting to derive a tight upper bound for $\mbox{Var}(L_N).$ The result along this direction was previously studied in the case of the Ising mixed $p$-spin model by Chatterjee \cite{Chatt13}, where he showed that $\mbox{Var}(L_N)=O(N^{3/4}(\log N)^{1/4})$ for certain choice of the mixture parameter $(\gamma_p)$ in the absence of external field. Probably the same approach would work in the spherical model too. We do not pursue this direction here. 
		
		In recent years, there have been some progress in understanding the fluctuation properties of the free energy. Chatterjee \cite{Chatt13} showed that the free energy of the Ising SK model without external field superconcentrates at any positive temperature and his techniques can be applied to show superconcentration in a more general class of Ising mixed $p$-spin models without external field. For the spherical SK model without external field, Baik and Lee \cite{Baik15} computed the correct order of the fluctuation of the free energy. In particular, they showed that the variance of the free energy at low temperature is of order $N^{2/3}$. This behavior changes dramatically in the presence of external field. In fact, it was showed in a very recent paper \cite{cpp} of Chen-Dey-Panchenko  that the true order of the fluctuation for the free energy at any positive temperature in both Ising and spherical mixed $p$-spin models with external field matches with that suggested by the Poincar\'{e} inequality. Furthermore, in the case of the mixed even $p$-spin models, they proved that the normalized fluctuation obeys a central limit theorem. Our second result below establishes an analogue of such limit theorem for the ground state energy.

		\begin{theorem}[Central limit theorem]\label{thm3}
			Consider the spherical mixed even $p$-spin model in the presence of external field. Recalling the quantity $u_t$ from Proposition \ref{lem6}, define
			$$
			\chi=\int_0^1\xi(u_t)dt>0.
			$$
            Then we have
			\begin{align*}
			\lim_{N\rightarrow\infty}d_{\text{TV}}\Bigl(\frac{L_N-\e L_N}{\sqrt{\chi N}},g\Bigr)=0,
			\end{align*}
			where for any two random variables $X$ and $Y,$
			 $$
			d_{\text{TV}}(X,Y):=\sup_{A}\bigl|\p(X\in A)-\p(Y\in A)\bigr|
			$$ 
			is the total variation distance between $X,Y$ and $g$ is a standard Gaussian random variable. 
		\end{theorem}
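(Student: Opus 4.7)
The plan is to follow the Chen--Dey--Panchenko \cite{cpp} CLT strategy for the free energy, substituting Theorem \ref{thm4} (ground-state disorder chaos) for its positive-temperature counterpart. The key identity, valid $\p$-almost surely when $h\neq 0$ (which forces the ground state $\sigma^*$ to be unique), is the envelope formula
$$\partial_{g_{i_1,\ldots,i_p}}L_N=\frac{\gamma_p}{N^{(p-1)/2}}\sigma^*_{i_1}\cdots\sigma^*_{i_p}.$$
Squaring and summing yields $\|\nabla L_N\|^2=N\xi(1)$ deterministically, whereas for two disorder copies coupled exactly as in \eqref{hamilton} one gets
$$\bigl\langle\nabla L_N(g^{(1)}),\nabla L_N(g^{(2)})\bigr\rangle=N\,\xi\bigl(R(\sigma^*_t,\tau^*_t)\bigr).$$
Combined with Theorem \ref{thm4} and the Gaussian covariance formula, this already produces the correct variance normalization
$$\lim_{N\to\infty}\frac{\text{Var}(L_N)}{N}=\int_0^1\xi(u_t)\,dt=\chi.$$

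For the total-variation CLT I would invoke the Nourdin--Peccati Stein bound: for a centered functional $W$ with unit variance and sufficient Malliavin regularity,
$$d_{\text{TV}}(W,g)\leq 2\sqrt{\text{Var}\bigl(\langle DW,-DL^{-1}W\rangle\bigr)},$$
where $D$ is the Malliavin derivative and $L$ the Ornstein--Uhlenbeck generator. Applied with $W=(L_N-\e L_N)/\sqrt{\chi N}$ and the Mehler representation $-DL^{-1}F(g)=\int_0^\infty e^{-s}\e[\nabla F(e^{-s}g+\sqrt{1-e^{-2s}}g')\mid g]\,ds$, the inner product unfolds into
$$\frac{1}{\chi}\int_0^\infty e^{-s}\,\xi(R_s)\,ds,\qquad R_s:=R\bigl(\sigma^*(g),\sigma^*(e^{-s}g+\sqrt{1-e^{-2s}}g')\bigr).$$
The change of variables $t=e^{-s}$ identifies the expected value of this integral with $\chi/\chi=1$, so the task reduces to showing that the integral concentrates at its mean. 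Theorem \ref{thm4} supplies the pointwise convergence $R_s\to u_{e^{-s}}$ in probability for each $s>0$ (the correlation parameter $t$ in \eqref{hamilton} corresponds to $e^{-s}$ here), after which dominated convergence---using $|R_s|\leq 1$ and the exponential weight---yields $L^2$ convergence of the integral to its deterministic limit, and hence vanishing of the variance.

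The main obstacle is that $L_N$ is only Lipschitz in the disorder, not $C^2$, so the Nourdin--Peccati bound is not literally applicable to $W$. To bypass this I would approximate $L_N$ by the smoothed surrogate $\widetilde L_{N,\beta}:=\beta^{-1}\log Z_N(\beta)$ with $\beta=\beta_N\to\infty$ at a controlled polynomial rate. Standard concentration of the Gibbs measure near the ground state on the sphere gives a deterministic sandwich of the form $0\leq L_N-\widetilde L_{N,\beta}\leq C\beta^{-1}(N\log N+1)$, so choosing $\beta_N$ sufficiently large (say $\beta_N=N^{1/2}\log^2 N$) makes the replacement error $o(\sqrt{N})$. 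One then runs the Stein/Malliavin argument on $\widetilde L_{N,\beta}$, whose gradient is the genuinely smooth Gibbs average $\gamma_p N^{-(p-1)/2}\langle\sigma_{i_1}\cdots\sigma_{i_p}\rangle_\beta$, and passes to $\beta\to\infty$ at the end. This reduces the problem to a uniform-in-$\beta$ version of disorder chaos: namely, that the coupled Gibbs overlap at inverse temperature $\beta_N$ still concentrates at $u_t$. I expect this uniformity to be the delicate technical step, to be handled by interpolating between the positive-temperature chaos of Chen--Hsieh--Hwang--Sheu \cite{Chen151} and the zero-temperature statement of Theorem \ref{thm4}; all remaining ingredients are then direct consequences of the CDP machinery combined with Theorem \ref{thm4}.
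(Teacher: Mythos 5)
Your proposal is a genuine alternative route to the paper's proof, and it correctly identifies all the structural ingredients: the envelope differentiation $\partial_{g_{i_1,\ldots,i_p}}L_N=\gamma_pN^{-(p-1)/2}\sigma^*_{i_1}\cdots\sigma^*_{i_p}$, the computation $\langle\nabla L_N(g^{(1)}),\nabla L_N(g^{(2)})\rangle=N\xi(R(\sigma^*,\tau^*))$, the reduction to concentration of $\int_0^1\xi(\text{overlap})\,dt$, and the use of Theorem~\ref{thm4}. The paper itself does not invoke Malliavin calculus: in Lemma~\ref{id}$(ii)$ it applies the Gaussian covariance identity $\e A(w)B(w)-\e A\e B=\int_0^1\sum C_{j,j'}\e[\partial_jA(w^1(t))\partial_{j'}B(w^2(t))]\,dt$ to $A=W_N(\beta)$, $B=\psi(W_N(\beta))$, with $W_N(\beta)=(\log Z_N(\beta)-\e\log Z_N(\beta))/(\beta\sqrt{\chi N})$ smooth in the disorder for each finite $\beta$, then sends $\beta\to\infty$ \emph{with $N$ held fixed}. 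Since the Gibbs measure degenerates to the point mass at the a.s.~unique maximizer as $\beta\to\infty$ (here $h\neq 0$), this limit is elementary and needs no quantitative link between $\beta$ and $N$. The result is the exact Stein covariance identity \eqref{id:eq2}, and the CLT then follows from Stein's lemma combined with Theorem~\ref{thm4}, exactly mirroring the closing step of your argument.

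The genuine gap in your proposal is the step you flag yourself: you believe the Nourdin--Peccati total-variation bound requires $C^2$ regularity, so you smooth $L_N$ by $\widetilde L_{N,\beta_N}=\beta_N^{-1}\log Z_N(\beta_N)$ with $\beta_N=N^{1/2}\log^2 N$, and this forces you to assume a uniform-in-$\beta$ (indeed $\beta$ growing polynomially in $N$) version of disorder chaos. That strengthened chaos statement is established neither in this paper nor in \cite{Chen151}, and ``interpolating'' between the fixed-$\beta$ chaos of \cite{Chen151} and the zero-temperature Theorem~\ref{thm4} is not a routine matter: the two proofs use different functionals and furnish no rate in $\beta$, so this step would need a substantial new argument. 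However, the complication is self-inflicted: the Nourdin--Peccati bound $d_{\text{TV}}(F,g)\leq 2\,\e|1-\langle DF,-DL^{-1}F\rangle|$ holds for $F\in D^{1,2}$ with absolutely continuous law, and $L_N$ is Lipschitz in the Gaussians with $\|\nabla L_N\|^2=N\xi(1)>0$, hence qualifies; no smoothing is required. (One small imprecision: Mehler's formula produces $\langle DW,-DL^{-1}W\rangle=\frac{1}{\chi}\int_0^\infty e^{-s}\,\e_{g'}[\xi(R_s)]\,ds$, with the inner conditional expectation $\e_{g'}$ that your formula omits; this is harmless, since Jensen applied to $\e_{g'}$ reduces the $L^2(g)$ concentration to the $L^2(g,g')$ concentration that Theorem~\ref{thm4} supplies.) With the smoothing removed, your route and the paper's are parallel and both correct, the paper's being somewhat more elementary in that it never introduces the operator $L^{-1}$.
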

		
			The assumption that the spherical model is even in Theorems \ref{thm4}, \ref{thm2} and \ref{thm3} is for technical purposes. It would be of great interest to extend these results to more general spherical mixed $p$-spin models.

	\subsection{Structure of the paper}

	Section~\ref{sec:parisi} will establish the Parisi formula for the ground state energy utilizing the inverse temperature limit of the Crisanti-Sommers functional $\mathcal{Q}_\beta(x_\beta).$ From this, we derive the characterization of the minimizer of $\mathcal{Q}$ followed by the verifications of Propositions~\ref{add:prop1}, \ref{add:prop1.5} and \ref{add:prop2}. In Section~\ref{sec:dis}, we will introduce a two-dimensional Guerra-Talagrand replica symmetry breaking bound for the free energy of the coupled Hamiltonian $H_{N,t}^1(\sigma)+H_{N,t}^2(\tau)$ with overlap constraint. This inequality is a two-dimensional generalization of $\mathcal{P}_\beta$ and was previously derived in Chen-Hsieh-Hwang-Sheu \cite{Chen151} to investigate chaos in disorder at any positive temperature. From this bound, we adapt the Parisi functional $\mathcal{P}_\beta$ to control the ground state energy of $H_{N,t}^1(\sigma)+H_{N,t}^2(\tau)$ over all possible values of the overlaps and deduce disorder chaos in Theorem~\ref{thm4}. In Section $4$, we prove the superconcentration in Theorem~\ref{thm2} by means of Chatterjee's integral representation for the variance of the ground state energy, see \cite{Chatt13}. In addition, we prove the central limit theorem in Theorem \ref{thm3} via Stein's method. At the end of this paper, we gather a few technical lemmas in the appendix. They will be devoted to studying some regularity properties of the Hamiltonians $H_N,H_{N,t}^1,H_{N,t}^2$ in order to justify the validity of our control of the ground state energy through the free energy at any positive temperature.
    
    \smallskip
	\smallskip
	
	{\noindent \bf Acknowledgements.} The authors thank Dmitry Panchenko for some discussions at the early stage of this research work. Special thanks are due to Antonio Auffinger for illustrating the absence of concentration of $\Crt_{N,0}$ from Proposition \ref{add:prop1.5} to us and several helpful comments. Both authors are indebted to an anonymous referee for several helpful suggestions regarding the presentation of the paper. The research of A. S. is partly supported by NSF grant DMS-1406247. The research of W.-K. C. is partly supported by NSF grant DMS-1513605 and Hong Kong Research Grants Council GRF-14302515.

 	\section{Proof of the Parisi formula at zero temperature}\label{sec:parisi}
 	
 		This section is devoted to establishing the main results in Subsection \ref{sub1.1}. The approach of Theorem~\ref{add:thm1} is based on a subtle control of the scaled Crisanti-Sommers functional, $\beta^{-1}\mathcal{Q}_\beta(x_\beta)$, as the inverse temperature tends to infinity. The difficulties we will encounter are mainly due to the fact that we generally do not know the analytic behavior of $x_\beta(s)$ when $s$ is very close to $1$ and $\beta$ is sufficiently large. This makes it very hard to handle the limits of the integrals in $\beta^{-1}\mathcal{Q}_\beta(x_\beta)$ directly. In order to obtain a meaningful limit, we shall construct a subsequence of $(\beta x_\beta)$ that converges vaguely on $[0,1)$ in the sense of Remark \ref{rm1}.  The novelty of our approach is to rewrite the integrals therein in an elementary way (see the expression \eqref{add:eq0}). In the limit, this allows us to avoid dealing with the singularity at $1$ by introducing a new variable $L$ in $\mathcal{Q}$ and leads to the desired representation. The same idea will be employed repeatedly throughout the rest of the paper.
 		
 		In Subsection~\ref{subs2.1}, we establish some key lemmas that help to control the behavior of $(\beta x_\beta)$. They will play an essential role later in the proof of disorder chaos. In Subsection \ref{subs2.2}, we establish the proof of Theorems \ref{add:thm1} and \ref{add:thm:char}. The establishment of Propositions \ref{add:prop1}, \ref{add:prop1.5}, and \ref{add:prop2} will be presented in Subsection \ref{subs2.3}. 
 		
 	\subsection{Some auxiliary lemmas}\label{subs2.1}

 	 Recall the optimizer $(x_\beta,b_\beta)$ from  \eqref{prop3:proof:eq0}. Let $q_\beta$ be the smallest value of $q$ such that $x_\beta(q)=1.$ The first step of our approach is to construct a subsequence of $(\beta x_\beta)$ that has a weak limit. We begin with two technical lemmas that control the function $\beta x_\beta$ and some of its integrals.
 	 
 	 \begin{lemma}\label{add:lem}
 	 There exists a constant $C_{\xi}$ depending only on $\xi$ such that for any $\beta>0$, 
 	 \begin{align}\label{eq-1}
 	 	\beta x_{\beta}(q)\leq \frac{C_\xi}{\xi(1)-\xi(q)},\,\,\forall q\in[0,1).
 	 	\end{align}
 	 \end{lemma}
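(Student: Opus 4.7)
The plan is to control $\beta x_\beta(q)$ by combining three pieces: an a priori linear-in-$\beta$ bound on $F(\beta)$, an exploitation of signs in the Crisanti-Sommers identity $\mathcal{Q}_\beta(x_\beta)=F(\beta)$ to isolate the weighted integral $\int_0^1\xi_\beta'(q)x_\beta(q)\,dq$, and finally the monotonicity of $x_\beta$ to convert that integral bound into the required pointwise bound.

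First, I would establish $F(\beta)\leq \beta\, GS$ for every $\beta>0$. This follows from the standard soft-max monotonicity: $(N\beta)^{-1}\log\int_{S_N}e^{\beta H_N(\sigma)}\,dm_N(\sigma)$ is nondecreasing in $\beta$ with limit $L_N/N$, so $\beta^{-1}F(\beta)$ is nondecreasing in $\beta$ with limit $GS$ by \eqref{eq:EquationGSdefinition}.

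Second, I would write the Crisanti-Sommers identity in the form
$$2F(\beta)=\int_0^1\xi_\beta'(q)x_\beta(q)\,dq+h_\beta^2\int_0^1 x_\beta(q)\,dq+\Bigl(\int_0^{q_\beta}\frac{dq}{\hat x_\beta(q)}+\log(1-q_\beta)\Bigr),$$
and discard the two nonnegative terms on the right. The $h_\beta^2$-piece is trivially nonnegative. The bracketed piece is also nonnegative: since $x_\beta\leq 1$, we have $\hat x_\beta(q)=\int_q^1 x_\beta(s)\,ds\leq 1-q$, whence $\int_0^{q_\beta}dq/\hat x_\beta(q)\geq -\log(1-q_\beta)$. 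Together with Step 1, this gives
$$\int_0^1\xi_\beta'(q)x_\beta(q)\,dq\leq 2F(\beta)\leq 2\beta\,GS.$$

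Third, the monotonicity of $x_\beta$ yields the pointwise lower bound
$$\int_0^1\xi_\beta'(s)x_\beta(s)\,ds\geq x_\beta(q)\int_q^1\xi_\beta'(s)\,ds=\beta^2 x_\beta(q)\bigl(\xi(1)-\xi(q)\bigr)$$
for any $q\in[0,1)$. Combining this with the previous display produces $\beta x_\beta(q)\leq 2\,GS/(\xi(1)-\xi(q))$, so one may take $C_\xi:=2\,GS$.

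I do not anticipate any serious obstacle. The only subtle observation is that the single non-sign-definite term $\log(1-q_\beta)$ in $\mathcal{Q}_\beta$ is automatically dominated by the inverse-integral $\int_0^{q_\beta}dq/\hat x_\beta(q)$ via the crude bound $\hat x_\beta\leq 1-q$; after that, the argument is pure bookkeeping and monotonicity.
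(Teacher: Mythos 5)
Your proof is correct, and it takes a genuinely different route from the paper. The paper establishes the key estimate $\int_0^1 \xi'(s)\beta x_\beta(s)\,ds \leq C_\xi$ via three ingredients: the Dudley entropy bound $\e\max_\sigma X_N(\sigma)/N\leq C_\xi$, the Gaussian integration-by-parts identity $\beta\bigl(\xi(1)-\e\la\xi(R)\ra_\beta\bigr)=\e\la X_N(\sigma)/N\ra_\beta$, and the differentiability of the Parisi formula in $\beta$ (cited to \cite{Tal06}) to convert the Gibbs-average of $\xi(R)$ into $\int_0^1\xi(s)\,x_\beta(ds)$. You instead derive the same integral bound directly from the Crisanti--Sommers identity $2F(\beta)=2\mathcal{Q}_\beta(x_\beta)$ by observing that the external-field term is trivially nonnegative and that the term $\int_0^{q_\beta}dq/\hat{x}_\beta(q)+\log(1-q_\beta)$ is nonnegative (via $\hat x_\beta(q)\leq 1-q$), combined with the elementary soft-max monotonicity $F(\beta)\leq\beta\,GS$. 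Both proofs then close identically with the monotonicity of $x_\beta$. Your route is more self-contained in that it bypasses the differentiability-of-Parisi-formula input and the Gaussian IBP computation, replacing them with a sign analysis of $\mathcal{Q}_\beta$; the trade-off is that it relies on Lemma~\ref{lem:discrete} up front, which the paper only invokes later. One small discrepancy worth noting: your constant is $2\,GS$, which depends on $h$ as well as $\xi$, whereas the paper's $C_\xi$ depends only on $\xi$ because it isolates the random part $X_N$ via Gaussian IBP. This is harmless for every downstream use of the lemma (only $\beta$-independence matters there), but it does mean your argument, as written, proves a mildly weaker statement than the one claimed.
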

 	 
 	 \begin{proof}
 	 	Note that for any $N\geq 1,$
 	 	\begin{align}\label{addition:eq1}
 	 	\e\max_{\sigma\in S_N}\frac{X_N(\sigma)}{N}\leq C_\xi.
 	 	\end{align}
 	 	Here the constant $C_\xi>0$ depends only on $\xi$ and this inequality is obtained by using the Dudley entropy integral (see e.g. \cite[Equation $(1.5)$]{Tal14}). For the detailed derivation, see Remark \ref{rmk2} in the appendix. From Gaussian integration by parts, one has the following identity,
 	 	\begin{align}\label{sec3.1:eq1}
 	 	\beta\Bigl(\xi(1)-\e\bla \xi(R(\sigma^1,\sigma^2)\bra_\beta\Bigr)=\e \Bla\frac{X_N(\sigma)}{N}\Bra_\beta,
 	 	\end{align}
 	 	where $\bla\cdot\bra_\beta$ is the Gibbs average with respect to the Gibbs measure $G_{N,\beta}(\sigma)$ defined by 
 	 	\begin{align*}
 	 	G_{N,\beta}(\sigma)=\frac{\exp \beta H_N(\sigma)}{Z_N(\beta)}.
 	 	\end{align*}
 	 		It is well-known (see \cite{Tal06}) that the Parisi formula is differentiable in $\beta$, which yields
 	 		\begin{align*}
 	 		\lim_{N\rightarrow\infty}\e\bla \xi(R(\sigma^1,\sigma^2))\bra_\beta&=\int_0^1\xi(s)x_\beta(ds).
 	 		\end{align*}
 	 	Using this equation together with \eqref{addition:eq1} and \eqref{sec3.1:eq1} leads to
 	 	$$
 	 	\beta\Bigl(\xi(1)-\int_0^1 \xi(s)x_\beta(ds)\Bigr)\leq C_\xi,
 	 	$$
 	 	where this inequality used the trivial bound $\la X_N(\sigma)\ra_\beta\leq \max_{\sigma\in S_N}X_N(\sigma).$
 	  	Finally, applying integration by part to this equation gives 
 	 	\begin{align}\label{add:eq-1}
 	 	\int_0^1\xi'(s)\beta x_\beta(s)ds=\beta\Bigl(\xi(1)-\int_0^1\xi(s)x_\beta(ds)\Bigr)\leq C_\xi.
 	 	\end{align}
 	 	Now, since clearly $$
 	 	\int_q^1\xi'(s)\beta x_{\beta}(s)ds\geq \beta x_\beta(q)(\xi(1)-\xi(q)),\,\,\forall q\in [0,1],$$
 	 	the inequality \eqref{eq-1} follows by \eqref{add:eq-1}.
 	 \end{proof}
 	 
 	 \begin{lemma}
 	 	\label{add:lem2}
 	 	     There exists a constant $C_\xi'>0$ depending only on $\xi$ such that
 	 	     \begin{align}\label{add:lem:eq0}
 	 	     \limsup_{\beta\rightarrow\infty}\beta(1-q_\beta)\leq C_\xi'
 	 	     \end{align}
 	 	     and for any $\beta>0$,
 	 		 	\begin{align}
 	 		 \begin{split}\label{add:lem:eq1}
 	 		 \int_0^1\beta x_\beta(s)ds&\leq C_\xi',\\
 	 		  \int_0^1\xi''(s)\beta x_\beta(s)ds&\leq C_\xi',\\
 	 		 \int_0^1s\xi''(s)\beta x_\beta(s)ds&\leq C_\xi',
 	 		 \end{split}
 	 		 \end{align}	
 	 		 
 	 \end{lemma}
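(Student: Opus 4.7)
The plan is to combine two ingredients from Lemma \ref{add:lem} and its proof: the pointwise bound $\beta x_\beta(q) \leq C_\xi/(\xi(1)-\xi(q))$ valid for all $q\in[0,1)$, and the integral bound $\int_0^1 \xi'(s)\, \beta x_\beta(s)\, ds \leq C_\xi$ from \eqref{add:eq-1}. Both of these hold uniformly in $\beta>0$, so the resulting estimates will be uniform as well.

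For the $\limsup$ statement \eqref{add:lem:eq0}, I would apply the pointwise bound at $q=q_\beta$. Since $x_\beta(q_\beta)=1$ by definition of $q_\beta$, this yields $\beta(\xi(1)-\xi(q_\beta)) \leq C_\xi$, which forces $q_\beta \to 1$ as $\beta \to \infty$. Once $\beta$ is large enough that $q_\beta \geq 1/2$, monotonicity of $\xi'$ gives $\xi(1)-\xi(q_\beta)=\int_{q_\beta}^1 \xi'(s)\, ds \geq \xi'(1/2)(1-q_\beta)$, and therefore $\beta(1-q_\beta) \leq C_\xi/\xi'(1/2)$, proving \eqref{add:lem:eq0}.

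For the three bounds in \eqref{add:lem:eq1}, my strategy is to split each integral at some fixed interior point $q_0 \in (0,1)$ (for concreteness, $q_0=1/2$). On $[q_0,1]$, using $\xi'(s) \geq \xi'(q_0) > 0$, the integral bound from the proof of Lemma \ref{add:lem} immediately gives $\int_{q_0}^1 \beta x_\beta(s)\, ds \leq C_\xi/\xi'(q_0)$. On $[0,q_0]$, monotonicity of $x_\beta$ combined with the pointwise bound applied at $q_0$ yields $\beta x_\beta(s) \leq \beta x_\beta(q_0) \leq C_\xi/(\xi(1)-\xi(q_0))$, so $\int_0^{q_0} \beta x_\beta(s)\, ds \leq q_0 C_\xi/(\xi(1)-\xi(q_0))$. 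Adding the two pieces handles the first integral in \eqref{add:lem:eq1}. The second follows from the same split, upper-bounding $\xi''(s)$ by $\xi''(q_0)$ on $[0,q_0]$ and by $\xi''(1)<\infty$ on $[q_0,1]$ (finite by the summability assumption $\sum 2^p \gamma_p^2 < \infty$). The third bound follows from the second by the trivial estimate $s \xi''(s) \leq \xi''(s)$ on $[0,1]$.

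The main subtle point, rather than a deep obstacle, is that neither of the two inputs alone is sufficient: the pointwise bound diverges like $(1-s)^{-1}$ near $s=1$ and is therefore not uniformly integrable on $[0,1]$, while the weighted integral bound $\int_0^1 \xi'(s)\beta x_\beta(s)\, ds \leq C_\xi$ cannot be directly converted into a bound on $\int_0^1 \beta x_\beta(s)\, ds$ because $\xi'(0)=0$ (no linear term in $\xi$). Splitting at a fixed interior $q_0$ precisely separates these two regimes, letting the pointwise bound control the region away from $1$ and the integral bound control the region away from $0$.
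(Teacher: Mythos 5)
Your proposal is correct and follows essentially the same route as the paper's proof: both establish the $\limsup$ bound by evaluating the pointwise bound from Lemma~\ref{add:lem} at $q=q_\beta$ and using monotonicity of $\xi'$, and both obtain the integral bounds by splitting at an interior point (the paper uses $1/2$, you use a generic $q_0$), applying the pointwise bound near $0$ and the weighted integral bound $\int_0^1 \xi'(s)\beta x_\beta(s)\,ds \leq C_\xi$ near $1$, then propagating to the $\xi''$-weighted integrals via $\xi''(s)\leq\xi''(1)$. The minor differences (a constant $C_\xi/\xi'(1/2)$ versus the paper's $C_\xi/\xi'(1)$, and using monotonicity of $x_\beta$ on $[0,q_0]$ in place of directly integrating the pointwise bound) are cosmetic and do not change the argument.
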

 
 	 \begin{proof} 
 	 	Applying $q=q_\beta$ to \eqref{eq-1} gives
 	 	\begin{align}\label{add:lem:proof:eq1}
 	 	\beta=\beta x_\beta(q_\beta)\leq \frac{C_\xi}{\xi(1)-\xi(q_\beta)}.
 	 	\end{align}
 	 	Since the left-hand side tends to infinity as $\beta\rightarrow\infty$, this inequality forces $\lim_{\beta\rightarrow\infty}q_\beta=1.$ On the other hand, from \eqref{add:lem:proof:eq1}, the mean value theorem and noting that $\xi$ is nondecreasing,
 	 	\begin{align*}
 	 	\beta\xi'(q_\beta)(1-q_\beta) \leq \beta (\xi(1)-\xi(q_\beta))\leq C_\xi.
 	 	\end{align*}
 	 	Consequently,
 	 	\begin{align*}
 	 	\limsup_{\beta\rightarrow \infty}\beta(1-q_\beta)\leq \frac{C_\xi}{\xi'(1)}.
 	 	\end{align*}
 	    Next, using  \eqref{eq-1} and \eqref{add:eq-1} gives
 	 \begin{align*}
 	 \int_0^1\beta x_\beta(s)ds&=\int_0^{1/2}\beta x_\beta(s)ds+\int_{1/2}^1\beta x_\beta(s)ds\\
 	 &\leq \int_0^{1/2}\frac{C_\xi}{\xi(1)-\xi(q)}dq+\int_{1/2}^1\frac{\xi'(s)}{\xi'(1/2)}\beta x_\beta(s)ds\\
 	 &\leq \frac{C_\xi}{\xi(1)-\xi(1/2)}+\frac{C_\xi}{\xi'(1/2)}=:C.
 	 \end{align*}
 	 Note that since $\xi$ is nondecreasing,
 	 \begin{align*}
 	 \int_0^1\xi''(s)\beta x_\beta(s)ds&\leq \xi''(1)C,\\
 	 \int_0^1s\xi''(s)\beta x_\beta(s)ds&\leq \xi''(1) C.
 	 \end{align*}
 	 Letting $C_\xi':=\max(C_\xi/\xi'(1),C,\xi''(1)C)$, the above four inequalities give \eqref{add:lem:eq0} and the three inequalities in \eqref{add:lem:eq1}.
 	 \end{proof}
 	  
 	 From \eqref{eq-1}, we may use the Helly selection theorem combined with a diagonalization process to conclude that there exists a nonnegative and nondecreasing sequence $(\beta_n)_{n\geq 1}$ with $\lim_{n\rightarrow\infty}\beta_n=\infty$ such that $(\beta_nx_{\beta_n})_{n\geq 1}$ converges vaguely  on $[0,1)$ (in the sense of Remark \ref{rm1}). Furthermore, from \eqref{add:lem:eq0} and \eqref{add:lem:eq1}, we can pass to a subsequence $(\beta_{n_k})_{k\geq 1}$ of $(\beta_n)_{n\geq 1}$ (two times if necessary) such that along this common subsequence, the following limits exist,
 	 \begin{align*}
 	 &\lim_{k\rightarrow\infty}\beta_{n_k}x_{n_k}\in\mathcal{N}\,\,\mbox{vaguely on $[0,1)$},\\
 	 &\lim_{k\rightarrow\infty}\beta_{n_k}(1-q_{\beta_{n_k}}),\\
 	 &\lim_{k\rightarrow\infty}\int_0^1\beta_{n_k} x_{\beta_{n_k}}(s)ds,
 	 \end{align*}
 	 where the space $\mathcal{N}$ is defined right before \eqref{more:eq10}.
 	 To lighten the notation, we shall assume, without loss of generality, that all these convergences hold for the sequence $(\beta_n)_{n\geq 1}.$ Denote
 	 \begin{align}
 	 	 \begin{split}
 	 	 \label{eq-5}
 	 	 \alpha_0&:=\lim_{n\rightarrow\infty}\beta_nx_{\beta_n}\in\mathcal{N}\,\,\mbox{vaguely on $[0,1)$},\\
 	 \delta_0&:=\lim_{n\rightarrow\infty}\beta_n(1-q_{\beta_n}),\\
 	 	L_0&:=\lim_{n\rightarrow\infty}\int_0^1\beta_n x_{\beta_n}(s)ds.
 	 \end{split}
 	 \end{align} 
 	 Note that $\delta_0,L_0\leq C_\xi'$ by Lemma \ref{add:lem2}.
 	
 	The following lemma gathers some crucial properties of the quantities $q_\beta$ and $b_\beta$. They are deduced using the minimality of $(x_\beta,b_\beta)$ of the Parisi functional $\mathcal{P}_\beta$ in \eqref{prop3:proof:eq0}. Recall that $\mu_\beta$ denotes the probability measure induced by $x_\beta.$
 	
 	\begin{lemma}
 		\label{lem-2}
 		For any $q$ in the support of $\mu_\beta,$ we have that
 		\begin{align}
 		\begin{split}\label{add:thm1:proof:eq2}
 		q&=\frac{h_\beta^2}{(b_\beta-d_\beta^{x_\beta}(0))^2}+\int_{0}^{q}\frac{\xi_\beta''(s)}{(b_\beta-d_\beta^{x_\beta}(s))^2}ds
 		\end{split}
 		\end{align}
 		and
 		\begin{align}\label{add:thm1:proof:eq-1}
 		b_\beta-d_\beta^{x_\beta}(q)&=\frac{1}{\int_q^1x_\beta(s)ds}.
 		\end{align}		
 	\end{lemma}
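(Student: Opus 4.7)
The plan is to derive both identities as first-order optimality conditions for the minimizer $(x_\beta, b_\beta)$ of the Parisi functional $\mathcal{P}_\beta$: equation \eqref{add:thm1:proof:eq2} from the first variation in $x$, and equation \eqref{add:thm1:proof:eq-1} from the first variation in $b$ combined with \eqref{add:thm1:proof:eq2} evaluated at the right endpoint $\hat{q}_\beta$ of the support.

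For \eqref{add:thm1:proof:eq2}, I would compute the directional derivative of $\mathcal{P}_\beta(\cdot, b_\beta)$ at $x_\beta$ along an admissible perturbation $y$ supported on $[0, \hat{q}_\beta]$. Since $\frac{d}{d\epsilon}\big|_{\epsilon=0} d_\beta^{x_\beta + \epsilon y}(q) = \int_q^1 \xi_\beta''(s) y(s)\, ds$, the variation of the two terms in $\mathcal{P}_\beta$ involving $d_\beta^{x_\beta}$ produces a double integral that simplifies via Fubini; combining with the variation of $-\int_0^1 q \xi_\beta''(q) x(q)\, dq$, the stationarity condition reduces to
\begin{equation*}
\int_0^{\hat{q}_\beta} \xi_\beta''(s) y(s) \left[\frac{h_\beta^2}{(b_\beta - d_\beta^{x_\beta}(0))^2} + \int_0^s \frac{\xi_\beta''(q)}{(b_\beta - d_\beta^{x_\beta}(q))^2}\, dq - s\right] ds = 0.
\end{equation*}
Since this must hold for every admissible $y$, the bracketed expression (which is continuous in $s$) must vanish at $\mu_\beta$-almost every $s$, hence on all of $\mbox{supp}(\mu_\beta)$, yielding \eqref{add:thm1:proof:eq2}.

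For \eqref{add:thm1:proof:eq-1}, I would first establish it at $q = \hat{q}_\beta$. The stationarity $\partial_b \mathcal{P}_\beta(x_\beta, b_\beta) = 0$ reads
\begin{equation*}
\frac{h_\beta^2}{(b_\beta - d_\beta^{x_\beta}(0))^2} + \int_0^1 \frac{\xi_\beta''(q)}{(b_\beta - d_\beta^{x_\beta}(q))^2}\, dq = 1 - \frac{1}{b_\beta}.
\end{equation*}
I would split the integral at $\hat{q}_\beta$. On $[\hat{q}_\beta, 1]$, where $x_\beta \equiv 1$ forces $d_\beta^{x_\beta}(q) = \xi_\beta'(1) - \xi_\beta'(q)$, the substitution $u = b_\beta - d_\beta^{x_\beta}(q)$ evaluates the corresponding integral in closed form to $(b_\beta - d_\beta^{x_\beta}(\hat{q}_\beta))^{-1} - b_\beta^{-1}$. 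On $[0, \hat{q}_\beta]$, the integral together with the $h_\beta^2$ term equals $\hat{q}_\beta$ by \eqref{add:thm1:proof:eq2} applied at the support point $\hat{q}_\beta$. Rearranging yields $b_\beta - d_\beta^{x_\beta}(\hat{q}_\beta) = 1/(1-\hat{q}_\beta) = 1/\hat{x}_\beta(\hat{q}_\beta)$, which is \eqref{add:thm1:proof:eq-1} at $\hat{q}_\beta$.

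To propagate \eqref{add:thm1:proof:eq-1} to every $q \in \mbox{supp}(\mu_\beta)$, I would distinguish by the local structure of $\mu_\beta$. On the absolutely continuous part of the support, differentiating \eqref{add:thm1:proof:eq2} in $q$ yields $(b_\beta - d_\beta^{x_\beta}(q))^2 = \xi_\beta''(q)$, which identifies the density of $\mu_\beta$; integrating $\hat{x}_\beta'(q) = -x_\beta(q)$ with this density then gives $\hat{x}_\beta(q) = (b_\beta - d_\beta^{x_\beta}(q))^{-1}$, the boundary constant being fixed by the value already established at $\hat{q}_\beta$. For atomic parts, differencing \eqref{add:thm1:proof:eq2} at two consecutive atoms $q_i < q_{i+1}$ and using the fact that $x_\beta$ is constant on $(q_i, q_{i+1})$ gives, via the substitution $u = b_\beta - d_\beta^{x_\beta}(s)$, the telescoping identity $\hat{x}_\beta(q_i) - \hat{x}_\beta(q_{i+1}) = (b_\beta - d_\beta^{x_\beta}(q_i))^{-1} - (b_\beta - d_\beta^{x_\beta}(q_{i+1}))^{-1}$, after which a backward induction from $\hat{q}_\beta$ completes the proof. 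The main technical obstacle is specifying the class of admissible perturbations $y$ carefully enough to justify the pointwise vanishing on $\mbox{supp}(\mu_\beta)$ in the second paragraph while respecting the constraints $x \in \mathcal{M}$ and \eqref{eq3}, together with the case analysis needed to accommodate the potentially mixed atomic/continuous structure of $\mu_\beta$ in the propagation step.
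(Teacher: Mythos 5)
Your argument for \eqref{add:thm1:proof:eq2} follows the same strategy as the paper: a first variation of $\mathcal{P}_\beta(\cdot,b_\beta)$ at $x_\beta$, a Fubini rearrangement, and extraction of the pointwise identity on $\mbox{supp}(\mu_\beta)$. One caution: since $\mathcal{M}$ constrains the admissible directions (perturbations $y$ must keep $x_\beta+\epsilon y$ nondecreasing and $[0,1]$-valued), what you actually get is a variational \emph{inequality} $\geq 0$, not stationarity; the vanishing of the bracketed expression then has to be deduced at support points (interior case via $\tfrac{d}{ds}\bar\Gamma(s)=-\Gamma(s)\xi_\beta''(s)=0$, and a separate one-sided argument if $0\in\mbox{supp}(\mu_\beta)$, where $\Gamma(0)\leq 0$ is only consistent with $h_\beta=0$). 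The paper handles exactly this distinction; your write-up glosses over it.

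For \eqref{add:thm1:proof:eq-1} you take a genuinely different route. The paper does not derive this identity from $\partial_b\mathcal{P}_\beta=0$ at all; it passes to the family of minimizers $(x_k,b_k)$ of $\mathcal{P}_\beta$ restricted to $k$-step functions, invokes Talagrand's established identity $b_k-d_\beta^{x_k}(q)=1/\int_q^1 x_k$ on $\mbox{supp}(\mu_k)$ (\cite[Eq.\ (4.11)]{Tal06}), and then uses uniqueness of $(x_\beta,b_\beta)$ to pass to the limit. Your approach is more self-contained: the $\partial_b$-stationarity calculation at $\hat q_\beta$ is correct (I checked the telescoping $\int_{\hat q_\beta}^1 \xi_\beta''(q)(b_\beta-d_\beta^{x_\beta}(q))^{-2}dq = (b_\beta-d_\beta^{x_\beta}(\hat q_\beta))^{-1}-b_\beta^{-1}$ and its combination with \eqref{add:thm1:proof:eq2} at $\hat q_\beta$), and the telescoping/differentiation propagation works cleanly when $\mu_\beta$ is a finite sum of atoms plus an absolutely continuous piece on an interval. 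The genuine gap, which you yourself flag, is the case analysis for arbitrary $\mu_\beta$: the ``consecutive atoms'' step presupposes a discrete structure, and a Parisi measure in principle admits more general supports (accumulating atoms, Cantor-like support), so a backward induction from $\hat q_\beta$ is not obviously well-founded without first establishing structural regularity of $\mu_\beta$. That is precisely what the paper avoids by citing the finite-dimensional result and taking a limit. If you want to keep your derivation, a cleaner closing move would be to prove the identity globally by showing $f(q):=\hat x_\beta(q)\bigl(b_\beta-d_\beta^{x_\beta}(q)\bigr)-1$ is constant across each component of $[0,\hat q_\beta]\setminus\mbox{supp}(\mu_\beta)$ and using the constraint \eqref{add:thm1:proof:eq2} on the support to control the jumps, but this still needs the structure of $\mu_\beta$ to be pinned down; as written the gap is real.
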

 	
 	\begin{proof}
 	Let $x\in\mathcal{M}$. Denote $x_\theta=(1-\theta)x+\theta x_{\beta}$ for $\theta\in[0,1].$ Plugging $x_\theta$ into $\mathcal{P}_\beta(\cdot,b_\beta)$ and computing the derivative with respect to $\theta$, the minimality of $x_\beta$ and Fubini's theorem together yields
 	\begin{align}\label{lem-2:proof:eq1}
 	\partial_\theta \mathcal{P}_\beta(x_\theta,b_\beta)\big|_{\theta=0}&=\frac{1}{2}\int_0^1\Gamma(q)\xi_\beta''(q)(x(q)-x_\beta(q))dq\geq 0,
 	\end{align}
 	where $$
 	\Gamma(q):=\frac{h_\beta^2}{\bigl(b_\beta-d_\beta^{x_\beta}(0)\bigr)^2}+\int_0^q\frac{\xi_\beta''(s)ds}{\bigl(b_\beta-d_\beta^{x_\beta}(s)\bigr)^2}-q
 	$$
 	Let $\mu$ be the probability measure induced by $x$. Writing $x(q)-x_\beta(q)=\int_0^qd(\mu-\mu_\beta)(r)$ and using Fubini's theorem again, \eqref{lem-2:proof:eq1} can be translated into
 	\begin{align*}
 	\int_0^1\int_r^1\Gamma(q)\xi_\beta''(q)dqd(\mu-\mu_\beta)(r)\geq 0
 	\end{align*}
 	and thus,
 	\begin{align*}
 	\int_0^1\bar{\Gamma}(r)d\mu(r)\geq \int_0^1\bar{\Gamma}(r)d\mu_\beta(r)
 	\end{align*}
 	for $\bar{\Gamma}(r):=\int_r^1\Gamma(q)\xi_\beta''(q)dq.$ Since the last inequality holds for all $x,$ this is equivalent to say that 
 	$$
 	\bar{\Gamma}(s)\geq \int_0^1\bar{\Gamma}(r)d\mu_\beta(r)
 	$$
 	for all $s\in [0,1]$ and the equality holds for every point in $\mbox{supp}(\mu_\beta)$. Note that $\mbox{supp}(\mu_\beta)\subset[0,1).$ If $s\in\mbox{supp}(\mu_\beta)\cap(0,1)$, we have
 	$$
 	\frac{d}{ds}\bar{\Gamma}(s)=-\Gamma(s)\xi_\beta''(s)=0
 	$$ 
    and then $\Gamma(s)=0$ since $\xi_\beta''>0$ on $(0,1)$.  If $s=0\in \mbox{supp}(\mu_\beta),$ the mean value theorem implies that for any $\eta\in(0,1)$, $$
 	-\Gamma(\eta')\xi_\beta''(\eta')=\bar{\Gamma}(\eta)-\bar{\Gamma}(0)\geq 0
 	$$
 	for some $\eta'\in(0,\eta),$ which leads to $\Gamma(0)\leq 0.$ This could be true only if $h_\beta=0$, in which case evidently $\Gamma(0)=0.$ These lead to \eqref{add:thm1:proof:eq2}. 
 	
 	As for \eqref{add:thm1:proof:eq-1}, for $k\geq 1,$ denote by $\mathcal{M}_k$ the space of all step functions $x\in\mathcal{M}$ with at most $k$ jumps and by $\mathcal{M}_k'$ the space of all $(x,b)$ with $x\in\mathcal{M}_k$ and $b\in\mathbb{R}$ satisfying $$b>\int_0^1\xi_\beta''(s)\beta x(s)ds.$$
 	Let $(x_k,b_k)$ be the minimizer of $\mathcal{P}_\beta$ restricted to $\mathcal{M}_k'.$ From \cite[Section 4]{Tal06}, it is understood that $x_k$ is also the minimizer of $\mathcal{Q}_\beta$ restricted to $\mathcal{M}_k.$ Furthermore, according to \cite[Equation $(4.11)$]{Tal06}, $(x_k,b_k)$ satisfies the following equation,
 	\begin{align}
 	\label{eq-11}
 	b_k-d_\beta^{x_k}(q)=\frac{1}{\int_q^1x_k(s)ds}
  	\end{align}
 	for all $q$ in the support of the probability measure $\mu_k$ induced by $x_k.$ By the uniqueness of the minimizer $(x_\beta,b_\beta)$ of $\mathcal{P}_\beta$, we may pass to a subsequence of $(x_k,b_k)$ such that its limit equals $(x_\beta,b_\beta).$ Thus, \eqref{add:thm1:proof:eq-1} follows from \eqref{eq-11}.	
 	\end{proof}
 	     
 	     The above proof is the only place where we need the Parisi formula \eqref{prop3:proof:eq0} in this section. It will be heavily used again when we establish disorder chaos in Section \ref{sec:dis}.
 	     
 	Letting $q=q_\beta$, the equation \eqref{add:thm1:proof:eq-1} reads
 	\begin{align}
 	\begin{split}
 	\label{add:thm1:proof:eq0}
 	b_\beta&=\xi_\beta'(1)-\xi_\beta'(q_\beta)+\frac{1}{1-q_\beta}.
 	\end{split}
 	\end{align} 
 	Lemma \ref{lem-2} allows us to prove some key properties of $\delta_0,L_0,\alpha_0.$
 	
 	\begin{lemma}
 		\label{add:lem0}
 	$0<\delta_0<\infty$ and $(L_0,\alpha_0)\in\mathcal{K}$.
 	\end{lemma}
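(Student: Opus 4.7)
The plan is to verify three facts: (a) $\delta_0<\infty$, (b) $\delta_0>0$, and (c) $(L_0,\alpha_0)\in\mathcal{K}$, i.e., $L_0>\int_0^1\alpha_0(s)\,ds$ (which, combined with (b), also gives $L_0>0$). The upper bound (a) is immediate from \eqref{add:lem:eq0}.

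The main work lies in (b), for which I would exploit the finiteness of $GS=\lim_{\beta\to\infty}\mathcal{P}_\beta(x_\beta,b_\beta)/\beta$. Setting $y_\beta:=\beta x_\beta$ and rescaling \eqref{parisi} yields
\begin{align*}
\frac{\mathcal{P}_\beta(x_\beta,b_\beta)}{\beta}&=\frac{1}{2}\Bigl[\frac{\beta h^2}{b_\beta-d_\beta^{x_\beta}(0)}+\int_0^1\frac{\beta\xi''(q)}{b_\beta-d_\beta^{x_\beta}(q)}\,dq\\
&\quad+\frac{b_\beta-1-\log b_\beta}{\beta}-\int_0^1 q\xi''(q)y_\beta(q)\,dq\Bigr].
\end{align*}
By Lemma \ref{add:lem2}, $d_\beta^{x_\beta}(q)/\beta=\int_q^1\xi''(s)y_\beta(s)\,ds\leq C_\xi'$ uniformly in $\beta$ and $q\in[0,1]$, and the last integral is likewise at most $C_\xi'$. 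Suppose for contradiction that $b_\beta/\beta\to\infty$ along a subsequence. Then $(b_\beta-d_\beta^{x_\beta}(q))/\beta\to\infty$ uniformly in $q$, so the first two bracketed terms vanish in the limit. Since $b_\beta-\log b_\beta\geq b_\beta/2$ once $b_\beta$ is large, the third term tends to $+\infty$, contradicting the finite limit $GS$. Hence $b_\beta/\beta$ remains bounded. Combined with \eqref{add:thm1:proof:eq0}, which yields $b_\beta/\beta\geq 1/(\beta(1-q_\beta))$, this forces $\beta(1-q_\beta)$ to stay bounded away from zero; taking the limit along the chosen subsequence delivers $\delta_0>0$.

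For (c), I would decompose
\[\int_0^1\beta_n x_{\beta_n}(s)\,ds=\int_0^{q_{\beta_n}}\beta_n x_{\beta_n}(s)\,ds+\beta_n(1-q_{\beta_n}),\]
using that $x_{\beta_n}\equiv 1$ on $[q_{\beta_n},1]$. The left-hand side converges to $L_0$ and the second term to $\delta_0$. The vague convergence $\beta_n x_{\beta_n}\to\alpha_0$ on $[0,1)$, together with the pointwise bound \eqref{eq-1} and Fatou's lemma applied on $[0,1-\epsilon]$ (then letting $\epsilon\to 0$), gives $\liminf_{n\to\infty}\int_0^{q_{\beta_n}}\beta_n x_{\beta_n}(s)\,ds\geq\int_0^1\alpha_0(s)\,ds$. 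Therefore $L_0\geq\int_0^1\alpha_0(s)\,ds+\delta_0>\int_0^1\alpha_0(s)\,ds$, and $L_0\geq\delta_0>0$, confirming $(L_0,\alpha_0)\in\mathcal{K}$.

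The hardest part will be (b): all four pieces of $\mathcal{P}_\beta(x_\beta,b_\beta)/\beta$ must be controlled simultaneously, and the a priori bounds in Lemmas \ref{add:lem} and \ref{add:lem2} are essential to rule out the divergent regime for $b_\beta/\beta$.
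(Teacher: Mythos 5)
Your proof is correct. Parts (a) and (c) track the paper's own argument: (a) is the identical citation of \eqref{add:lem:eq0}, and (c) is essentially the same truncation-plus-Fatou argument, with the minor cosmetic difference that you split at $q_{\beta_n}$ rather than at a fixed $q<1$ sent to $1$. The genuine divergence is in (b). The paper also rules out $b_{\beta_n}/\beta_n\to\infty$, but does so by feeding that divergence into the stationarity equation \eqref{add:thm1:proof:eq2} at $q=q_{\beta_n}$, obtaining the absurdity $0=\lim q_{\beta_n}=1$. You instead exploit the finiteness of $GS=\lim_\beta\mathcal{P}_\beta(x_\beta,b_\beta)/\beta$: if $b_\beta/\beta$ diverged, the term $(b_\beta-1-\log b_\beta)/\beta$ would force the rescaled Parisi functional to blow up while the remaining pieces stay bounded (the first two vanish, the last is $\le C_\xi'$), contradicting $GS<\infty$. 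Both routes rely on \eqref{add:thm1:proof:eq0} (hence on \eqref{add:thm1:proof:eq-1} from Lemma \ref{lem-2}) to translate boundedness of $b_\beta/\beta$ into $\delta_0>0$; what changes is the input used to obtain that boundedness. Your variant is slightly more self-contained in that it does not invoke the second stationarity relation \eqref{add:thm1:proof:eq2}, only the constraint $b_\beta>d_\beta^{x_\beta}(0)$ and the value of the Parisi functional at its minimizer, at the small cost of appealing explicitly to $GS<\infty$ (which is clear from \eqref{addition:eq1}, but worth saying). One minor remark: in (c) the dominated convergence theorem, using the bound \eqref{eq-1} on $[0,1-\epsilon]$, would give equality rather than just the Fatou inequality; the inequality suffices here, but the stronger statement comes for free.
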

 	
 	\begin{proof}
 		The finiteness of $\delta_0$ has been verified by \eqref{add:lem:eq0}.
 		To show $\delta_0>0$, we argue by contradiction. Assume on the contrary $\delta_0=0.$ From \eqref{add:thm1:proof:eq0}, 
 		\begin{align*}
 		\lim_{n\rightarrow\infty}\frac{b_{\beta_n}}{\beta_n}=\xi''(1)\delta_0+\frac{1}{\delta_0}=\infty.
 		\end{align*}
 		On the other hand, from the second inequality of \eqref{add:lem:eq1}, $\beta_n^{-1}d_{\beta_n}^{x_{\beta_n}}(q)\leq C_\xi'$ for any $q\in[0,1]$. This and the above display together imply that 
 		\begin{align*}
        \frac{1}{\beta_n^{-1}\bigl(b_{\beta_n} -d_{\beta_n}^{x_{\beta_n}}(q)\bigr)}
 		\end{align*}
 		is uniformly bounded on $[0,1]$ for $n$ sufficiently large and it converges to zero uniformly on $[0,1].$ As a result, from \eqref{add:thm1:proof:eq2} with $q=q_{\beta_n}$, we reach a contradiction,
 		\begin{align*}
 		0&=\lim_{n\rightarrow\infty}\Bigl(\frac{h^{2}}{\beta_n^{-2}(b_{\beta_n}-d_{\beta_n}^{x_{\beta_n}}(0))^2}+\int_{0}^{q_{\beta_n}}\frac{\xi''(s)}{\beta_n^{-2}(b_{\beta_n}-d_{\beta_n}^{x_{\beta_n}}(s))^2}ds\Bigr)=\lim_{n\rightarrow\infty}q_{\beta_n}=1.
 		\end{align*}
 		This completes the proof for $\delta_0>0$.
 		
 		 To check $(L_0,\alpha_0)\in\mathcal{K},$ note that from the proof of Lemma \ref{add:lem2}, $\lim_{n\rightarrow\infty}q_{\beta_n}=1$. Therefore, for any fixed $q\in(0,1),$ 
 		 \begin{align*}
 		 \int_0^1\beta_nx_{\beta_n}(s)ds&=\int_0^q\beta_nx_{\beta_n}(s)ds+\int_q^1\beta_nx_{\beta_n}(s)ds\\
 		 &\geq \int_0^q\beta_nx_{\beta_n}(s)ds+\beta_n(1-q_{\beta_n})
 		 \end{align*}
 		 for sufficiently large $n.$ Using \eqref{eq-1} and the dominated convergence theorem gives
 		 \begin{align*}
 		 L_0\geq \int_0^q\alpha_0(s)ds+\delta_0.
 		 \end{align*}
 		 Since this holds for all $q\in(0,1),$ sending $q\rightarrow 1$ leads to
 		 \begin{align*}
 		 L_0\geq \int_0^1\alpha_0(s)ds+\delta_0
 		 \end{align*}
 		 Thus, $(L_0,\alpha_0)\in\mathcal{K}$ because $\delta_0>0.$
 	\end{proof}

 	Recall the Crisanti-Sommers functional $\mathcal{Q}_\beta$ from \eqref{csf}, 
 	\begin{align*}
 	\mathcal{Q}_\beta(x)&=\frac{1}{2}\Bigl(\int_0^1(\xi_\beta'(q)+h_\beta^2)x (q)dq+\int_0^{\hat{q}}\frac{dq}{\hat{x} (q)}+\log(1-\hat{q})\Bigr)
 	\end{align*}
 	for $x\in\mathcal{M}$, where $\hat{x}(q)=\int_q^1x(s)ds$ and $\hat{q}<1$ satisfies $x(\hat{q})=1.$  Define
 	$$
 	\check{x}(q)=\int_0^qx(s)ds.
 	$$ 
 	Using integration by part for the first integral of $\mathcal{Q}_\beta$ and adapting the notation $\check{x}$, we can express
 	\begin{align}
 	\label{add:eq0}
 	\mathcal{Q}_\beta(x)&=\frac{1}{2}\Bigl((\xi_\beta'(1)+h_\beta^2)\check{x} (1)-\int_0^1\xi_\beta''(s)\check{x} (q)dq+\int_0^{\hat{q}}\frac{dq}{\check{x} (1)-\check{x} (q)}+\log(1-\hat{q})\Bigr).
 	\end{align}

 	\begin{lemma}
 		\label{add:lem1}
 		Let $(y_\beta)\subset {\mathcal{M}}$ such that $(\beta y_\beta)$ converges vaguely to some $\alpha$ on $[0,1)$ and
 		\begin{align}
 		\label{add:lem1:eq1}
 		L:=\lim_{\beta\rightarrow\infty}\int_0^1\beta y_\beta(s)ds<\infty.
 		\end{align}
 		Assume that there exists $v_\beta\in[0,1)$ with $y_\beta(v_\beta)=1$ for all $\beta>0$ such that
 		\begin{align}
 		\label{add:lem1:eq3}
 		\delta:=\lim_{\beta\rightarrow\infty}\beta(1-v_\beta)\in (0,\infty).
 		\end{align} 
 		Then
 		\begin{align*}
 		\lim_{\beta\rightarrow\infty}\frac{\mathcal{Q}_\beta(y_\beta)}{\beta}&=\mathcal{Q}(L,\alpha)=\frac{1}{2}\Bigl((\xi'(1)+h^2)L-\int_0^1\xi''(q)\Bigl(\int_0^q\alpha(s)ds\Bigr)dq+\int_0^1\frac{dq}{L-\int_0^q\alpha(s)ds}\Bigr).
 		\end{align*}
 	\end{lemma}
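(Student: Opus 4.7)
The plan is to start from the elementary rewriting \eqref{add:eq0} of the Crisanti-Sommers functional, take $\hat{q}=v_\beta$, divide by $\beta$, and then pass to the limit in each of the four resulting terms separately. Using $\xi_\beta'=\beta^{2}\xi'$, $\xi_\beta''=\beta^{2}\xi''$, and $h_\beta^{2}=\beta^{2}h^{2}$, dividing \eqref{add:eq0} by $\beta$ produces
\[
\frac{\mathcal{Q}_\beta(y_\beta)}{\beta}=\frac{1}{2}\Bigl((\xi'(1)+h^{2})\,\beta\check{y}_\beta(1)-\int_{0}^{1}\xi''(q)\,\beta\check{y}_\beta(q)\,dq+\int_{0}^{v_\beta}\frac{dq}{\beta\check{y}_\beta(1)-\beta\check{y}_\beta(q)}+\frac{\log(1-v_\beta)}{\beta}\Bigr).
\]
The point of this rewriting is that every occurrence of $\check{y}_\beta$ now appears as $\beta\check{y}_\beta(q)=\int_{0}^{q}\beta y_\beta(s)\,ds$, which is the correct object for the hypothesized vague convergence $\beta y_\beta\to\alpha$.

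For the first two terms I would apply dominated convergence. Since $y_\beta$ is nondecreasing, vague convergence gives $\beta y_\beta(s)\to\alpha(s)$ at every continuity point of $\alpha$, hence almost everywhere on $[0,1)$. Moreover the uniform bound $\beta\check{y}_\beta(q)\le\beta\check{y}_\beta(1)\to L$ holds on all of $[0,1]$, so $\beta\check{y}_\beta(q)\to\int_{0}^{q}\alpha(s)\,ds$ for every $q\in[0,1]$ (with value $L$ at $q=1$). This immediately delivers the first two limits $(\xi'(1)+h^{2})L$ and $\int_{0}^{1}\xi''(q)\int_{0}^{q}\alpha(s)\,ds\,dq$. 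The fourth term is a triviality: since $\beta(1-v_\beta)\to\delta\in(0,\infty)$, one has $\log(1-v_\beta)/\beta\sim(\log\delta-\log\beta)/\beta\to 0$.

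The main obstacle is the third term, whose integrand could in principle blow up as $q\uparrow v_\beta$; this is exactly where the hypothesis $\beta(1-v_\beta)\to\delta>0$ is crucial. Since $y_\beta(s)=1$ on $[v_\beta,1]$, for every $q\in[0,v_\beta]$ one has the uniform lower bound
\[
\beta\check{y}_\beta(1)-\beta\check{y}_\beta(q)=\int_{q}^{1}\beta y_\beta(s)\,ds\ \ge\ \int_{v_\beta}^{1}\beta\,ds\ =\ \beta(1-v_\beta),
\]
which stays bounded away from zero as $\beta\to\infty$. Thus the integrand is dominated by $1/(\beta(1-v_\beta))$, which is uniformly bounded. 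Combined with the pointwise limit $\beta\check{y}_\beta(1)-\beta\check{y}_\beta(q)\to L-\int_{0}^{q}\alpha(s)\,ds$ (whose limiting denominator is itself $\ge\delta>0$) and the fact that $v_\beta\to 1$, bounded convergence gives the third-term limit $\int_{0}^{1}dq/(L-\int_{0}^{q}\alpha(s)\,ds)$. Adding the four limits yields exactly $\mathcal{Q}(L,\alpha)$, completing the proof plan.
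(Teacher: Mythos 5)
Your proposal is correct and follows essentially the same route as the paper's proof: take $\hat q=v_\beta$ in the rewritten Crisanti--Sommers functional \eqref{add:eq0}, handle the first two terms and the $\log$ term by bounded convergence and the hypotheses, and control the singular integral via the uniform lower bound $\beta\check y_\beta(1)-\beta\check y_\beta(q)\ge\beta(1-v_\beta)\to\delta>0$. The only cosmetic difference is that the paper bounds the difference between the $\beta$-integral and the limit integral by an explicit quantity that vanishes, whereas you invoke bounded convergence directly after rewriting the integral over $[0,v_\beta]$ as an integral over $[0,1]$ with the indicator $1_{[0,v_\beta]}$; the substance is the same.
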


 	\begin{proof}
 		Since 
 		\begin{align}\label{add:eq1}
 		\beta\check{y}_\beta(q)\leq \int_0^1\beta y_\beta(q)dq,
 		\end{align}
 		the assumption \eqref{add:lem1:eq1} and the bounded convergence theorem imply that
 		\begin{align}
 		\label{add:eq3}
 		\int_0^q \alpha(s)ds\leq L,\,\,\forall q\in[0,1)
 		\end{align}
 		and
 		\begin{align}\label{add:eq4}
 		\lim_{\beta\rightarrow\infty}\frac{1}{\beta}\int_0^1\xi_\beta''(q)\check{y}_\beta (q)dq=\int_0^1\xi''(q)\Bigl(\int_0^q\alpha(s)ds\Bigr)dq.	
 		\end{align}
 		On the other hand, since 
 		\begin{align}\label{add:eq2}
 		\inf_{q\in[0,v_\beta]}\beta(\check{y}_\beta(1)-\check{y}_\beta(q))\geq \beta(\check{y}_\beta(1)-\check{y}_\beta(v_\beta))=\beta(1-v_\beta),
 		\end{align}
 		the condition \eqref{add:lem1:eq3} leads to $L-\int_0^q\alpha(s)ds\geq \delta$ for all $q\in[0,1)$. Using this inequality, \eqref{add:lem1:eq3} and \eqref{add:eq2}, it follows that
 		\begin{align*}
 		&\Bigl|\int_0^{v_\beta}\Bigl(\frac{1}{\beta(\check{y}_\beta (1)-\check{y}_\beta (q))}-\frac{1}{L-\int_0^q\alpha(s)ds}\Bigr)dq\Bigr|\\
 		&\leq \frac{1}{\delta\beta(1-v_\beta)}\int_0^{1}\Bigl(|\beta\check{y}_\beta(1)-L|+1_{[0,v_\beta]}(q)\Bigl|\int_0^q\beta y_\beta(s)ds-\int_0^q\alpha(s)ds\Bigr|\Bigr) dq\rightarrow 0,
 		\end{align*}
 		where the last limit used \eqref{add:eq1}, \eqref{add:eq3} and the bounded convergence theorem. Thus, 
 		\begin{align}\label{add:lem1:proof:eq1}
 		\lim_{\beta\rightarrow\infty}\int_0^{v_\beta} \frac{1}{\check{y}_\beta (1)-\check{y}_\beta (q)}dq=\int_0^1\frac{1}{L-\int_0^q\alpha(s)ds}dq.
 		\end{align}
 		Finally, note that from \eqref{add:lem1:eq3},
 		\begin{align*}
 		\frac{\log(1-v_\beta)}{\beta}=\frac{\log\beta(1-v_\beta)}{\beta}-\frac{\log \beta}{\beta}\rightarrow 0.
 		\end{align*}
 		From \eqref{add:eq0}, this combined with \eqref{add:eq4} and \eqref{add:lem1:proof:eq1} leads to the announced result.
 	\end{proof}

 	\subsection{Proof of Theorems \ref{add:thm1} and \ref{add:thm:char}}\label{subs2.2}
 	
 	We start with a lemma, which states that one can compute the ground state energy through the free energy by sending $\beta$ to infinity.
 	
 		\begin{lemma}\label{lem:discrete}
 			We have that 
 			\begin{align*}
 			GS=\lim_{\beta\rightarrow\infty}\frac{F(\beta)}{\beta}.
 			\end{align*}
 		\end{lemma}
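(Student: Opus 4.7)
The plan is to sandwich $F_N(\beta)/\beta$ between $L_N/N$ from above and $L_N/N$ minus a controlled oscillation error from below, and then to interchange the limits $\beta\to\infty$ and $N\to\infty$ via a uniform-in-$N$ regularity estimate supplied by the appendix.

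For the upper bound, since $m_N$ is a probability measure on $S_N$ and $H_N(\sigma)\leq L_N$ pointwise, one has $Z_N(\beta)\leq e^{\beta L_N}$, so $F_N(\beta)/\beta\leq L_N/N$. Taking expectations and letting $N\to\infty$ (using $\e L_N/N\to GS$) gives $F(\beta)/\beta\leq GS$ for every $\beta>0$, hence $\limsup_{\beta\to\infty}F(\beta)/\beta\leq GS$.

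For the lower bound, fix $r\in(0,1]$, let $\sigma^*\in S_N$ achieve $L_N$, and define the spherical cap $B_r:=\{\sigma\in S_N:\|\sigma-\sigma^*\|\leq r\sqrt{N}\}$. A direct surface-area computation yields $m_N(B_r)\geq c_0\,r^{N-1}/\sqrt{N}$ for a universal constant $c_0>0$. Restricting the partition-function integral to $B_r$, we get
\[
\frac{F_N(\beta)}{\beta}\;\geq\;\frac{1}{N}\inf_{\sigma\in B_r}H_N(\sigma)+\frac{\log m_N(B_r)}{\beta N}\;\geq\;\frac{L_N}{N}-\Omega_N(r)+\frac{\log m_N(B_r)}{\beta N},
\]
where $\Omega_N(r):=\sup\{N^{-1}|H_N(\sigma)-H_N(\tau)|:\sigma,\tau\in S_N,\ \|\sigma-\tau\|\leq r\sqrt{N}\}$ is the modulus of continuity of $H_N/N$ at scale $r$. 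The crucial input, to be extracted from the regularity lemmas of the appendix via a Dudley-type chaining argument, is the bound
\[
\e\,\Omega_N(r)\leq C\,r\qquad\text{uniformly for }N\geq 1,\ r\in(0,1],
\]
with $C$ depending only on $\xi$ and $h$.

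Taking expectations in the displayed inequality, sending $N\to\infty$ with $r,\beta$ fixed, and using $\e L_N/N\to GS$, $\e F_N(\beta)\to F(\beta)$, together with $\log m_N(B_r)/(\beta N)\to (\log r)/\beta$, yields
\[
\frac{F(\beta)}{\beta}\;\geq\;GS-Cr+\frac{\log r}{\beta}.
\]
Letting $\beta\to\infty$ gives $\liminf_{\beta\to\infty}F(\beta)/\beta\geq GS-Cr$, and finally $r\to 0^+$ produces $\liminf_{\beta\to\infty}F(\beta)/\beta\geq GS$, which together with the upper bound proves the lemma. The main obstacle is the uniform oscillation estimate $\e\Omega_N(r)\leq Cr$: it hinges on the intrinsic variogram $\e(X_N(\sigma)-X_N(\tau))^2=2N(\xi(1)-\xi(R(\sigma,\tau)))$ behaving like $\|\sigma-\tau\|^2$ with constants independent of $N$, so that Dudley's entropy integral against the $\epsilon^{-(N-1)}$ covering growth of spherical caps produces a bound independent of $N$; this delicate matching is precisely what the appendix regularity lemmas are designed to capture.
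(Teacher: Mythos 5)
Your proposal is correct and reconstructs precisely the argument the paper points to: the paper omits the proof of Lemma~\ref{lem:discrete}, saying only that it "requires a covering procedure for the sphere via the Dudley entropy integral" and referring the reader to the proof of Lemma~\ref{lem:dis_correlated_field} and the appendix. Your upper bound $Z_N(\beta)\leq e^{\beta L_N}$ is immediate, and your lower bound via a spherical cap around $\arg\max H_N$ together with the uniform modulus-of-continuity estimate $\e\Omega_N(r)\leq Cr$ is exactly what Lemma~\ref{lem0} delivers (the external field part is trivially $|h|$-Lipschitz in $\|\cdot\|_2/\sqrt{N}$, so it folds into the same bound). The only cosmetic difference from the paper's template is that Lemma~\ref{lem:dis_correlated_field} works with a full $\delta$-net of $S_N$ rather than a single cap around the maximizer; both rest on the same cap-volume lower bound (of the form $m_N(B_r)\geq (r/4)^N$, which is all you need since you divide by $\beta N$ and send $\beta\to\infty$ after $N\to\infty$) and the same Dudley estimate, so your single-cap variant is a clean and equivalent implementation.
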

 		
 		It is easy to see that the same result is also valid in the Ising mixed $p$-spin models, see \cite[Section 1.1]{Pan}. In the spherical models, the corresponding proof requires a covering procedure for the sphere via the Dudley entropy integral \cite[Equation $(1.5)$]{Tal14}. As this part of the argument is quite standard (see the proof of Lemma \ref{lem:dis_correlated_field}), we will omit the proof.	
 
 	\begin{proof}[\bf Proof of Theorem \ref{add:thm1}]
 		Consider $(L,\alpha)\in\mathcal{K}.$ Denote $\delta=L-\int_0^1\alpha(s)ds>0$ and take $v_\beta=1-\delta/\beta.$ Consider $y_\beta\in {\mathcal{M}}$ defined by $y_\beta(s)=1$ on $[v_\beta,1]$ and $y_\beta(s)=\min(\alpha(s)/\beta,1)$ on $[0,v_\beta).$ It is easy to see that $\lim_{\beta\rightarrow\infty}\beta(1-v_\beta)=\delta$ and that $(\beta y_\beta)$ converges vaguely to $\alpha$ on $[0,1)$ with
 		\begin{align*}
 		\int_0^1\beta y_\beta(s)ds&=\beta(1-v_\beta)+\int_0^{v_\beta}\min(\alpha(s),\beta)ds\rightarrow \delta+\int_0^1\alpha(s)ds=L.
 		\end{align*}
 		Therefore, from Lemmas \ref{add:lem1}, \ref{lem:discrete} and the Crisanti-Sommers formula \eqref{CS:eq1},
 		\begin{align*}
 		GS\leq \lim_{\beta\rightarrow\infty}\frac{{\mathcal{Q}}_\beta(y_\beta)}{\beta}=\mathcal{Q}(L,\alpha).
 		\end{align*}
 		Since this is true for any $\alpha$ and $L>\int_0^1\alpha(s)ds,$ we get that
 		\begin{align}\label{add:thm1:proof:eq4}
 		GS\leq \inf_{(L,\alpha)\in\mathcal{K}}\mathcal{Q}(L,\alpha).
 		\end{align}
 		
 		Next, we establish the lower inequality for $GS$. Recall $\alpha_0 $ and $L_0$ from \eqref{eq-5}. Note that $(L_0,\alpha_0)\in\mathcal{K}$ by Lemma \ref{add:lem0}. From \eqref{add:lem:eq1}, $$\beta_n\check{x}_{\beta_n}(q)\leq C_\xi',\,\,\forall n\geq 1,q\in[0,1]$$
 		and from \eqref{eq-1}, the weak convergence of $(\beta_nx_{\beta_n})_{n\geq 1}$ and the definition of $L_0$ imply
 		\begin{align*}
 		\lim_{n\rightarrow\infty}\beta_n\check{x}_{\beta_n}(q)&=\int_0^q\alpha_0(s)ds,\,\,q\in[0,1),\\
 		\lim_{n\rightarrow\infty}\beta_n\check{x}_{\beta_n}(1)&=L_0.
 		\end{align*}
 		In addition, from Lemma \ref{add:lem0}, $$
 		\lim_{n\rightarrow\infty}\frac{\log(1-q_{\beta_n})}{\beta_n}=\lim_{n\rightarrow\infty}\Bigl(\frac{\log \bigl(\beta_n(1-q_{\beta_n})\bigr)}{\beta_n}-\frac{\log \beta_n}{\beta_n}\Bigr)=0.
 		$$
 		From these and the expression of $\mathcal{Q}_\beta$ in \eqref{add:eq0}, applying the Fatou lemma and the bounded convergence theorem implies 
 		\begin{align*}
 		GS&=\lim_{n \rightarrow\infty}\frac{{\mathcal{Q}}_\beta(x_{\beta_n})}{\beta_n}\\
 	    &\geq \frac{1}{2}\Bigl((\xi'(1)+h^2)\lim_{n\rightarrow\infty}\beta_n\check{x}_{\beta_n}(1)-\int_0^1\xi''(s)\lim_{n\rightarrow\infty}\beta_n\check{x}_{\beta_n} (q)dq\\
 	    &\quad+\int_0^{1}\lim_{n\rightarrow\infty}\frac{1_{[0,q_\beta]}(q)dq}{\beta_n\bigl(\check{x}_{\beta_n} (1)-\check{x}_{\beta_n} (q)\bigr)}+\lim_{n\rightarrow\infty}\frac{\log(1-{q}_{\beta_n})}{\beta_n}\Bigr)\\
 		&=\mathcal{Q}(L_0,\alpha_0),
 		\end{align*}
 		which implies $$
 		GS\geq \inf_{(L,\alpha)\in\mathcal{K}}\mathcal{Q}(L,\alpha).
 		$$
 		This together with \eqref{add:thm1:proof:eq4} gives \eqref{add:thm1:eq1} and implies that $(L_0,\alpha_0)$ is a minimizer of $\mathcal{Q}$. Noting that $\mathcal{Q}$ is strictly convex on the convex space $\mathcal{K}$, the uniqueness of this minimizer follows. 
 		
 		Finally, we prove \eqref{add:eq6}. To see this, observe that if $(\beta_n')_{n\geq 1}$ is any nonnegative sequence with $\lim_{n\rightarrow\infty}\beta_n'=\infty$ such that the following limits exist,
 		\begin{align}
 		\begin{split}\label{add:eq7}
 		 		\alpha_0'&:=\lim_{n\rightarrow\infty}\beta_n'x_{\beta_n'}\in\mathcal{N}\,\,\mbox{vaguely},\\
 		\delta_0'&:=\lim_{n\rightarrow\infty}\beta_n'(1-q_{\beta_n'}),\\
 		L_0'&:=\lim_{n\rightarrow\infty}\int_0^1\beta_n'x_{\beta_n'}(s)ds,
 		\end{split}
 		\end{align}
 		then the same procedure as the proof for the lower bound of $GS$ leads to $$
 		(L_0',\alpha_0')\in\mathcal{K}\,\,\mbox{and}\,\,GS\geq \mathcal{Q}(L_0',\alpha_0').$$
 		In other words, $(L_0',\alpha_0')$ is a minimizer of $\mathcal{Q}$. Consequently, by the uniqueness of the minimizer, 
 		\begin{align}
 		\label{add:eq8}
 		(L_0',\alpha_0')=(L_0,\alpha_0).
 		\end{align}
 		If now $\int_0^1\beta x_\beta(s)ds$ does not converge to $L_0$ as $\beta\rightarrow\infty,$ then one can find  a sequence $(\beta_n')_{n\geq 1}$ (by using Lemmas \ref{add:lem} and \ref{add:lem2} as well as the argument after Lemma \ref{add:lem2}) such that the three limits \eqref{add:eq7} exist, but $L_0'\neq L_0.$ However, this contradicts \eqref{add:eq8}. Similarly, if $(\beta x_\beta)_{\beta>0}$ does not converge to $\alpha_0$ vaguely on $[0,1)$, then from the equivalence of the weak convergence stated in Remark \ref{rm1}, we can again find a sequence $(\beta_n')_{n\geq 1}$ such that the three limits in \eqref{add:eq7} exist, but 
 		$$
 		\lim_{n\rightarrow\infty}\int_0^1f(s)d\bigl(\beta_n'x_{\beta_n'}\bigr)=\int_0^1f(s)d\alpha_0'\neq \int_0^1f(s)d\alpha_0
 		$$
 		for some continuous function $f$ on $[0,1)$ with compact support. This leads to a contradiction of \eqref{add:eq8} again.
 		As a summary, we show that \eqref{add:eq6} must hold and this ends our proof.
 	\end{proof}
 	
 	\begin{remark}\rm
 		Recall that originally $\alpha_0,\delta_0,L_0$ are defined along a common sequence $(\beta_n)_{n\geq 1}.$ In the above proof, we use the strict convexity of $\mathcal{Q}$ to show that $\alpha_0=\lim_{\beta\rightarrow\infty}\beta x_\beta$ vaguely and $L_0=\lim_{\beta\rightarrow\infty}\int_0^1\beta x_\beta(s)ds$. Nonetheless, these convergences do not guarantee that $\lim_{\beta\rightarrow\infty}\beta(1-q_\beta)$ exists or it is equal to $\delta_0$. 
 	\end{remark}

 	\begin{proof}[\bf Proof of Theorem \ref{add:thm:char}]
 		Let $(L,\alpha)$ and $(L',\alpha')$ be any two pairs in $\mathcal{K}.$ Denote by $\nu$ and $\nu'$ the measures on $[0,1)$ induced by $\alpha$ and $\alpha'$ respectively. Define $L_\theta=(1-\theta)L+\theta L'$ and $\alpha_\theta=(1-\theta)\alpha+\theta\alpha'$ for any $\theta\in[0,1].$ By the minimality of $(L,\alpha)$, we compute the right derivative of $\mathcal{Q}(L_\theta,\alpha_\theta)$ with respect to $\theta$ at $0$,
 		\begin{align*}
 		\partial_\theta\mathcal{Q}(L_\theta,\alpha_\theta)\Big|_{\theta=0}=&(L'-L)\left((\xi'(1)+h^2)-\int_0^1\frac{dq}{\bigl(L-\int_0^q\alpha(r)dr\bigr)^2}\right)\\
 		&+\int_0^1\Bigl[\Bigl(\frac{1}{\bigl(L-\int_0^q\alpha(r)dr\bigr)^2}-\xi''(q)\Bigr)\int_0^q(\alpha'(s)-\alpha(s))ds \Bigr]dq.
 		\end{align*}
 		Suppose now $(L,\alpha)$ is the minimizer. Then
 		\begin{align*}
 		\partial_\theta\mathcal{Q}(L_\theta,\alpha_\theta)\Big|_{\theta=0}\geq 0.
 		\end{align*}
 		Since this true for any $(L',\alpha'),$ the equation \eqref{add:thm:char:eq1} follows
 		and
 		\begin{align*}
 		\int_0^1\Bigl[\Bigl(\frac{1}{\bigl(L-\int_0^q\alpha(r)dr\bigr)^2}-\xi''(q)\Bigr)\int_0^q(\alpha'(s)-\alpha(s))ds \Bigr]dq\geq 0.
 		\end{align*}
 		Interchanging the integrals $ds$ and $dq$, we obtain
 		\begin{align*}
 		\int_0^1\Bigl[\int_s^1\Bigl(\frac{1}{\bigl(L-\int_0^q\alpha(r)dr\bigr)^2}-\xi''(q)\Bigr)dq\Bigr](\alpha'(s)-\alpha(s)) ds\geq 0.
 		\end{align*}
 		Here, using \eqref{add:thm:char:eq1}, 
 		\begin{align*}
 		\int_s^1\Bigl(\frac{1}{\bigl(L-\int_0^q\alpha(r)dr\bigr)^2}-\xi''(q)\Bigr)dq&=\xi'(s)+h^2-\int_0^s\frac{1}{\bigl(L-\int_0^q\alpha(r)dr\bigr)^2}dq=\bar{g}(s)
 		\end{align*} 
 		and thus, recalling $\bar{g}$ from \eqref{add:thm:char:eq2},
 		\begin{align*}
 		\int_0^1\bar{g}(s)(\alpha'(s)-\alpha(s)) ds\geq 0.
 		\end{align*}
 		Next writing $\alpha(s)=\int_0^s\nu(du)$ and $\alpha'(s)=\int_0^s\nu'(du)$ and interchanging the integrals $du$ and $ds$, the last inequality becomes
 		\begin{align}
 		\begin{split}
 		0&\leq \int_0^1\bar{g}(s)(\alpha'(s)-\alpha(s)) ds\\\notag
 		&=\int_0^1\bar{g}(s)\Bigl(\int_0^s(\nu'(du)-\nu(du))\Bigr)ds\\\notag
 		&=\int_0^1\Bigl(\int_u^1\bar{g}(s)ds\Bigr)(\nu'(du)-\nu(du))\notag
 		\end{split}\\
 		\begin{split}\label{add:thm:char:proof:eq1}
 		&=\int_0^1g(u)(\nu'(du)-\nu(du)),
 		\end{split}
 		\end{align}
 		where $g$ is defined through \eqref{add:thm:char:eq3}. If we take $\nu'=\nu+\nu''$ for any arbitrary finite measure $\nu''$ on $[0,1),$ then we obtain that
 		$0\leq \int_0^1g(u)\nu''(du),$
 		which implies 
 		\begin{align}
 		\label{add:thm:char:proof:eq2}
 		\min_{u\in[0,1]}g(u)\geq 0.
 		\end{align}
 		On the other hand, if we let $\nu'$ be the measure defined by $\nu'(A)=\nu(A\cap \{u\in[0,1):g(u)\leq 0\})$, then \eqref{add:thm:char:proof:eq1} gives
 		\begin{align*}
 		0&\geq \int_{\{u\in[0,1):g(u)>0\}} g(u)\nu(du),
 		\end{align*}
 		which together with \eqref{add:thm:char:proof:eq2} means that $g(u)=0$ for all $u$ in the support of $\nu$ and thus $\nu(S)=\nu([0,1)).$ Conversely, it is clear that if \eqref{add:thm:char:eq1}, \eqref{add:thm:char:proof:eq2} and $\nu(S)=\nu([0,1))$ hold, then \eqref{add:thm:char:proof:eq1} is valid and thus, $(L,\alpha)$ is the minimizer by uniqueness.
 	\end{proof}
 	
 	\subsection{Proof of Propositions \ref{add:prop1}, \ref{add:prop1.5}, and \ref{add:prop2}}\label{subs2.3}
 	We now use Theorems \ref{add:thm1} and \ref{add:thm:char} to compute the ground state energy of the three cases of the mixed $p$-spin models mentioned in Propositions \ref{add:prop1}, \ref{add:prop1.5}, and \ref{add:prop2}.

 	\begin{proof}[\bf Proof of Proposition \ref{add:prop1}]
 		Assume $\xi'(1)+h^2\geq \xi''(1).$ Let $L=(\xi'(1)+h^2)^{-1/2}$ and $\alpha=0$. It can be immediately seen that \eqref{add:thm:char:eq1} is fulfilled by this pair $(L,\alpha)$. In addition,
 		\begin{align*}
 		\bar{g}(s)&=\xi'(s)+h^2-s(\xi'(1)+h^2),\,\,\bar{g}(1)=0,\\
 		\bar{g}'(s)&=\xi''(s)-(\xi'(1)+h^2)\leq 0.
 		\end{align*}
 		This implies that $g(u)> 0$ for $u\in(0,1)$. Let $\nu$ be the measure induced by $\alpha.$ Since $S=\emptyset$ and $\nu(S)=0=\nu([0,1))$, we conclude that $(L,\alpha)$ is the minimizer of \eqref{add:thm1:eq1}, which means that the model is replica symmetric at zero temperature and \eqref{GS:eq1} holds. 
 		
 		Conversely, let  $(L_0,\alpha_0)$ be the minimizer of \eqref{add:thm1:eq1}. Suppose that the model is replica symmetric at zero temperature, i.e., $\alpha_0=0.$ From \eqref{add:thm:char:eq1}, $L_0=(\xi'(1)+h^2)^{-1/2}$ and $\bar{g}(1)=0$. If $\xi'(1)+h^2<\xi''(1)$, then there exists some $s_0\in(0,1)$ such that
 		$\bar{g}'(s)>0$ for all $s\in[s_0,1]$, from which $\bar{g}(s)<0$ for $s\in[s_0,1)$ and thus, $g(u)<0$ for $u\in[s_0,1).$  However, this contradicts the assumption of $(L_0,\alpha_0)$ being the minimizer of \eqref{add:thm1:eq1} since $\min_{u\in[0,1]}g(u)\geq 0$ by Theorem \ref{add:thm:char}. 
 	\end{proof}
 		
 	\begin{proof}[\bf Proof of Proposition \ref{add:prop1.5}]
        Note the condition $\xi'(1)+h^2<\xi''(1)$ implies that $\gamma_p>0$ for at least one $p\geq 3,$ which implies that $q\xi''(q)-\xi'(q)$ is strictly increasing on $[0,1]$ and thus, gives the existence and uniqueness of $q_0$ in \eqref{GS:eq3}. Choose $L=\xi''(q_0)^{-1/2}$ and define $\alpha$ by 
 		\begin{align*}
 		\alpha(s)&=
 		\left\{
 		\begin{array}{ll}
 		0,&\mbox{if $q\in[0,q_0)$},\\
 		\frac{\xi'''(s)}{2\xi''(s)^{3/2}},&\mbox{if $q\in[q_0,1)$}.
 		\end{array}
 		\right.
 		\end{align*}
 		Here, $\alpha$ is nondecreasing due to the assumption of $\xi''(s)^{-1/2}$ being concave on $(0,1].$ With this choice of $(L,\alpha)$, a direct computation gives \eqref{add:thm:char:eq1} by applying \eqref{GS:eq3}. On the other hand, since 
 		$$
 		\bar{g}(q)=\left\{
 		\begin{array}{ll}
 		\xi'(s)+h^2-s\xi''(s),&\mbox{if $q\in[0,q_0)$},\\
 		0,&\mbox{if $q\in[q_0,1]$}.
 		\end{array}
 		\right.
 		$$
 		one sees that $g(u)>0$ if $u\in[0,q_0)$ and $g(u)=0$ if $u\in[q_0,1]$, from which $S=[q_0,1)$ and clearly $\nu(S)=\nu([0,1))$ for $\nu$ be the measure induced by $\alpha.$ Hence, $(L,\alpha)$ is the minimizer of  \eqref{add:thm1:eq1} and \eqref{GS:eq2} is obtained by applying \eqref{GS:eq3} to the following computation,
 		\begin{align*}
 		GS&=\frac{1}{2}\Bigl((\xi'(1)+h^2)\xi''(q_0)^{-1/2}-\int_{q_0}^1\xi''(q)\bigl(\xi''(q_0)^{1/2}-\xi''(q)^{1/2}\bigr)dq+q_0\xi''(q_0)^{1/2}+\int_{q_0}^{1}\xi''(q)^{1/2}\Bigr)\\
 		&=\frac{1}{2}\Bigl((\xi'(q_0)+h^2)\xi''(q_0)^{-1/2}+q_0\xi''(q_0)^{1/2}+2\int_{q_0}^{1}\xi''(q)^{1/2}\Bigr)\\
 		&=q_0\xi''(q_0)^{1/2}+\int_{q_0}^1 \xi''(q)^{1/2}dq.
 		\end{align*} 	
 	\end{proof}

 	\begin{proof}[\bf Proof of Proposition \ref{add:prop2}]
 		Recall the constant $z$ from \eqref{add:prop2:eq2}. Set $L=(z\delta)^{1/2}+(z^{-1}\delta )^{1/2}$ and $\alpha(s)=(z\delta)^{1/2}$ for all $s\in[0,1)$
 		for $\delta:=z(1+z)^{-1}.$ Equation \eqref{add:thm:char:eq1} follows by a straightforward computation,
 		\begin{align*}
 		\int_0^1\frac{dq}{\bigl(L-\int_0^q\alpha(s)ds\bigr)^2}&=\frac{1}{z\delta}\int_0^1\frac{dq}{\bigl(z^{-1}+1-q\bigr)^2}=\frac{z}{\delta(1+z)}=1=\xi'(1).
 		\end{align*}
 		Since
 		\begin{align*}
 		\bar{g}(s)&=s^{p-1}-\frac{1}{z\delta}\int_0^s\frac{dq}{(z^{-1}+1-q)^2}\\
 		&=s^{p-1}+\frac{1}{z}-\frac{1}{\delta\bigl(z(1-s)+1\bigr)},\,\,s\in[0,1],
 		\end{align*}
 		we have that
 		\begin{align*}
 		g(u)&=\int_u^1\bar{g}(s)ds=\frac{1}{p}(1-u^{p})+\frac{1-u}{z}-\frac{1+z}{z^2}\log\bigl(z(1-u)+1\bigr).
 		\end{align*}
 		Clearly $g(1)=0$ and by \eqref{add:prop2:eq2}, $g(0)=0$. 
 		
 		We now proceed to check that $g(u)> 0$ for all $u\in(0,1).$ To see this, note that 
 		\begin{align*}
 		g'(u)&=\frac{u}{1+z(1-u)}-u^{p-1}=\frac{u}{1+z(1-u)}a(u),
 		\end{align*}
 		where 
 		$$
 		a(u)=1-u^{p-2}(1+z)+zu^{p-1}.
 		$$
 		It is clear that $g'(u)>0$ for $u$ being sufficiently close to $0.$ Let $u_0\in(0,1)$ be a local maximum of $g$. If $g(u)<0$ for some $u\in(0,1),$ then $g$ has a local minimum at some $u_1\in(0,1).$ Consequently, $a(u_0)=a(u_1)=a(1)=0$ and by Rolle's theorem, there exist distinct $v_0,v_1\in(0,1)$ such that 
 		\begin{align*}
 		a'(v_0)&=v_0^{p-3}\bigl(z(p-1)v_0-(p-2)(1+z)\bigr)=0,\\
 		a'(v_1)&=v_1^{p-3}\bigl(z(p-1)v_1-(p-2)(1+z)\bigr)=0,
 		\end{align*}
 		but these equations also imply that $v_0=v_1$, a contradiction. Therefore, $g(u)>0$ for all $u\in(0,1).$ Consequently, $S=\{0\}$ and $\nu(S)=\nu([0,1))$, from which $(L,\alpha)$ is the minimizer to \eqref{add:thm1:eq1} by Theorem~\ref{add:thm1} and thus, $GS$ equals
 		\begin{align*}
 		&\frac{1}{2}\Bigl((z\delta)^{1/2}+(z^{-1}\delta)^{1/2}-(p-1)(z\delta)^{1/2}\int_0^1 q^{p-1}dq+\frac{1}{(z\delta)^{1/2}}\int_0^1\frac{dq}{z^{-1}+1-q}\Bigr)\\
 		&=\frac{1}{2}\Bigl((z^{-1}\delta)^{1/2}+\frac{(z\delta)^{1/2}}{p}+\frac{\log(z+1)}{(z\delta)^{1/2}}\Bigr),
 		\end{align*}
 		which gives \eqref{add:prop2:eq1} by substituting $\delta=z(1+z)^{-1}$ and using the equation \eqref{add:prop2:eq2}.
 	\end{proof}

	\section{Proof of chaos in disorder}\label{sec:dis}
	
    Throughout this section, we assume that the mixed $p$-spin model is even, i.e., $\gamma_p=0$ for all odd $p\geq 3.$ We will establish disorder chaos for the ground state. Recall the Hamiltonians $H_{N,t}^1$ and $H_{N,t}^2$ from \eqref{hamilton} and their maximizers $\sigma_t^*$ and $\tau_t^*$ from \eqref{more:eq4}. 	Define the coupled Hamiltonian and the product measure on $S_N\times S_N$ by
    \begin{align}
    \begin{split}\label{hamilton2}
    H_{N,t}(\sigma,\tau)=H_{N,t}^1(\sigma)+H_{N,t}^2(\tau),\\
    d m_N(\sigma,\tau)=dm_N(\sigma)\times dm_N(\tau).
    \end{split}
    \end{align} 
    
    To facilitate our argument, we give an outline of the approach.  While the Crisanti-Sommers functional $\mathcal{Q}_\beta$ was heavily used in Section $2$, we shall adapt the Parisi function $\mathcal{P}_\beta$ in \eqref{prop3:proof:eq0} throughout this section. Our main tool is based on Guerra's replica symmetry breaking (RSB) bound for the free energy of $H_{N,t}(\sigma,\tau)$ with overlap $R(\sigma,\rho)$ staying around a given level $u\in[-1,1].$ This bound should be understood as a two-dimensional generalization of the Parisi functional $\mathcal{P}_\beta$. It was originally derived in \cite{Chen151} to investigate disorder chaos for the spherical mixed even $p$-spin model at positive temperature. In the inverse temperature limit, we show that for any $u\in(-1,1)$, this bound leads to a RSB bound (see Theorem \ref{lem5}) for 
    \begin{align}\label{more:eq8}
   \limsup_{N\rightarrow\infty}\e \max_{|R(\sigma,\tau)-u|\geq \varepsilon}\frac{H_{N,t}(\sigma,\tau)}{N}
    \end{align}
    for $\varepsilon>0$ sufficiently small. For any $t\in(0,1)$, by a careful control of the RSB bound, we prove that there exists a unique constant $u_t$ such that \eqref{more:eq8} is strictly less than $2GS$ when $u=u_t.$ Consequently, this combined with an application of the Gaussian concentration inequality implies that for large enough $N$ the following inequality holds with overwhelming probability,
    $$
    \max_{|R(\sigma,\tau)-u_t|\geq \varepsilon}\frac{H_{N,t}(\sigma,\tau)}{N}<\max_\sigma\frac{H_{N,t}^1(\sigma)}{N}+\max_{\tau}\frac{H_{N,t}^2(\tau)}{N}.
    $$
    As a result, from the optimality of $\sigma_t^*,\tau_t^*$, the event $|R(\sigma_t^*,\tau_t^*)- u_t|\leq \varepsilon$ holds with overwhelming probability, which gives the assertion of Theorem~\ref{thm4}.

    In Subsection $3.1$, we state the RSB bound for the coupled free energy from \cite{Chen151}. Subsection $3.2$ will establish the RSB bound for the maximal coupled Hamiltonian \eqref{more:eq8}.   As one will see, similar technicalities appeared in Section $2$ will occur here when we handle the inverse temperature limit of the RSB bound for the coupled free energy. We shall perform the same trick in Section $2$ to bypass these difficulties. In Subsection $3.3$, we will analyze the RSB bound obtained in Subsection~$3.2$ and use it to show that there exists a constant $u_t$ that fulfills the claim properties of Theorem \ref{thm4}. Finally, with the help of Subsection~$3.3$, we deduce that the quantity \eqref{more:eq8} is strictly less than $2GS$ when $u=u_t$ and provide the proof of Theorem \ref{thm4} in Subsection $3.4$.

    	\subsection{RSB bound for the coupled free energy}

    	For any $u\in\mathbb{R}$, set $\mbox{sign}(u)=1$ if $u\geq 0$ and $\mbox{sign}(u)=-1$ if $u<0.$ The Guerra replica symmetry breaking bound for the coupled free energy, associated to the coupled Hamiltonian $H_{N,t}$, is stated as follows.

    	\begin{proposition}\label{prop4}
    		For  $x\in\mathcal{M}$, $\lambda \in \mathbb{R}$ and $b>\int_0^1\xi_{\beta}''(s)x(s)ds+|\lambda |$, we have that for any $u\in [-1,1],$
    		\begin{align*}
    		F_{\beta}(t,u)&:=\lim_{\eta\rightarrow 0} \limsup_{N \rightarrow \infty} \frac{1}{N}\e \log\int_{|R(\sigma,\tau)-u|<\eta}\exp\bigl(\beta H_{N,t}(\sigma,\tau)\bigr)dm_N(\sigma,\tau) \\
    		&\leq  \mathcal{P}_{\beta}(t,u,x,b,\lambda),
    		\end{align*}
    		where the functional $\mathcal{P}_\beta(t,u,x,b,\lambda)$ is defined as follows. Set 
    		\begin{align*}
    		d_{\beta,u}^x(q)=d_\beta^x(|u|)+\frac{1-t}{1+t}(d_\beta^x(q)-d_\beta^x(|u|)),\,\,\forall q\in[0,|u|].
    		    		\end{align*}
    		Define
    		\begin{align}\label{prop4:eq1}
    		\mathcal{P}_\beta(t,u,x,b,\lambda) &=T_\beta(t,u,x,b,\lambda)+\left\{
    		\begin{array}{ll}
    		\frac{h_\beta^2}{b-\lambda- d_\beta^x(0)},&\mbox{if $u\in[0,1]$},\\
    		\frac{h_\beta^2}{b-\lambda-d_{\beta,u}^x(0)},&\mbox{if $u\in[-1,0]$},
    		\end{array}\right.
    		\end{align}
    		where for $\iota:=\mbox{sign}(u)$,
    		\begin{align}
    		\begin{split}\label{prop4:eq2}
    		T_\beta(t,u,x,b,\lambda)&:=\log \sqrt{\frac{b^2}{b^2-\lambda^2}}+\frac{1+t}{2}  \int_0^{|u|} \frac{\xi_{\beta}''(q)}{b-\iota \lambda-d_\beta^x(q)} dq+ \frac{1-t}{2}  \int_0^{|u|} \frac{\xi_{\beta}''(q)}{b+\iota \lambda-d_{\beta,u}^x(q)} dq\\
    		&\quad +\frac{1}{2} \int_{|u|}^1 \frac{\xi_{\beta}''(q)}{b-\lambda-d_\beta^x(q)} dq + \frac{1}{2} \int_{|u|}^1 \frac{\xi_{\beta}''(q)}{b+\lambda-d_\beta^x(q)} dq \\
    		&\quad-\lambda u +b-1- \log b - \int_0^1 q\xi_{\beta}''(q) x(q)dq.
    		\end{split}
    		\end{align}
    	\end{proposition}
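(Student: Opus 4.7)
The plan is to prove Proposition~\ref{prop4} by a coupled Guerra--Talagrand interpolation along the lines of Chen--Hsieh--Hwang--Sheu \cite{Chen151}, combining (i) a Lagrange multiplier device to enforce the soft overlap constraint $|R(\sigma,\tau)-u|<\eta$, and (ii) a spherical cascaded interpolation adapted to the fact that $H_{N,t}^1$ and $H_{N,t}^2$ share the disorder $\sqrt{t}X_N$.

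First, I would handle the overlap constraint. Using the elementary bound $\mathbf{1}_{\{|R(\sigma,\tau)-u|<\eta\}}\le \exp\bigl(N\lambda(R(\sigma,\tau)-u)+N|\lambda|\eta\bigr)$ for any $\lambda\in\Reals$, the constrained partition function is dominated by
\[
e^{-N\lambda u+N|\lambda|\eta}\int \exp\bigl(\beta H_{N,t}(\sigma,\tau)+N\lambda R(\sigma,\tau)\bigr)\,dm_N(\sigma,\tau).
\]
After taking logarithms, dividing by $N$ and letting $\eta\to 0$, the contribution $-\lambda u$ matches the corresponding term in \eqref{prop4:eq2}, and the remaining quantity is a free energy of a Gaussian plus a quadratic perturbation in the overlap, to which Guerra's machinery can be applied.

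Second, I would build the interpolating Hamiltonian. The ingredients are a Ruelle probability cascade with weights encoded by $x\in\mathcal{M}$ and a sequence of i.i.d.\ Gaussian fields on the cascade. For $s\in[0,1]$ set
\[
\mathcal{H}_s(\sigma,\tau)=\sqrt{s}\bigl(\beta H_{N,t}(\sigma,\tau)+N\lambda R(\sigma,\tau)\bigr)+\sqrt{1-s}\,Y(\sigma,\tau),
\]
where $Y$ is a centered cascaded Gaussian process whose covariance, when $u\ge 0$, equals $\xi_\beta'(q)$ shared between the two copies for $q\in[0,u]$ (with the split weights $(1+t)/2$ and $(1-t)/2$ reflecting the couplings of $X_N$ and of the independent parts $X_N^j$) and equals independent $\xi_\beta'$ cascades for $q\in[u,1]$. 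When $u<0$, the covariance between $\sigma$ and $\tau$ is flipped to run against $|u|$, which is precisely why the auxiliary function $d_{\beta,u}^{x}$ appears in \eqref{prop4:eq1}--\eqref{prop4:eq2}; this is the step where the evenness of $\xi$ is essential, as it guarantees $\xi(-R)=\xi(R)$ and allows the cross-covariance to be matched.

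Third, I would differentiate $\e F_N(s)$ using Gaussian integration by parts combined with the spherical Laplace/saddle-point representation
\[
m_N(f)=\frac{1}{Z_N(b)}\int_{\Reals^N}f(\sigma)\exp\Bigl(-\frac{b}{2}\sum_i\sigma_i^2+\tfrac{bN}{2}\Bigr)d\sigma
\]
used by Talagrand in \cite{Tal06}; the parameter $b$ is the saddle-point variable in \eqref{prop4:eq1}. This produces two kinds of contributions: quadratic-form terms of the shape $\xi(R)-R\xi'(\cdot)$ paired against replicas, and the explicit boundary pieces. By convexity of $\xi$ on $[-1,1]$ (available because $\xi$ is even), the quadratic-form terms have the correct sign, so $\tfrac{d}{ds}\e F_N(s)\le 0$. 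Evaluating at $s=0$ produces the one-body cascaded computation whose free energy is known explicitly; this yields the denominators $b-\lambda-d_\beta^{x}(q)$, $b+\lambda-d_\beta^{x}(q)$ (and their $d_{\beta,u}^{x}$ counterparts), the external-field term $h_\beta^2/(b-\lambda-d_\beta^x(0))$, and the spherical residual $\log\sqrt{b^2/(b^2-\lambda^2)}+b-1-\log b-\int_0^1 q\xi_\beta''(q)x(q)dq$. Letting $s\to 1$ then $N\to\infty$ and $\eta\to 0$ and optimizing the Lagrange bound recovers $\mathcal{P}_\beta(t,u,x,b,\lambda)$.

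The main obstacle is the sign analysis for negative $u$. The cross-overlap $R(\sigma,\tau)$ can take values in $[-1,1]$, and the linear perturbation $\lambda R(\sigma,\tau)$ breaks the $\sigma\leftrightarrow -\sigma$ symmetry even though the Gaussian part is even. Designing the cascade so that the derivative stays $\le 0$ for all $u\in[-1,1]$ forces the asymmetric treatment reflected in the two cases of \eqref{prop4:eq1} and in the appearance of $d_{\beta,u}^{x}$; verifying this sign, which reduces to a pointwise convexity inequality after integration by parts, is the technically delicate step and is precisely where the assumption $b>\int_0^1\xi_\beta''(s)x(s)ds+|\lambda|$ is used to keep all denominators positive.
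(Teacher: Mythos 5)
The paper offers no proof of this proposition: it is cited verbatim from Proposition 2.1 of Chen--Hsieh--Hwang--Sheu \cite{Chen151}, with the reader referred there for the derivation. Your sketch correctly reconstructs that argument --- the Lagrange-multiplier bound $\indi\{|R-u|<\eta\}\le e^{N\lambda(R-u)+N|\lambda|\eta}$ to soften the overlap constraint, Guerra's interpolation against a Ruelle cascade whose covariance is split with weights $(1\pm t)/2$ along $[0,|u|]$ to mirror the shared disorder $\sqrt{t}X_N$, Talagrand's Gaussian saddle-point replacement of the sphere which introduces $b$ and produces the $\log\sqrt{b^2/(b^2-\lambda^2)}$ determinant factor, and the asymmetric handling of $u<0$ via $d^x_{\beta,u}$ forced by the external field breaking spin-flip symmetry --- so it is essentially the same approach. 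One small point: the bound holds for every admissible $\lambda$ by construction, so no optimization over $\lambda$ is performed at this stage (the choice of $\lambda$ is deferred to the later analysis of $E(t,u,\lambda)$), contrary to the closing phrase of your sketch.
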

    	
    		This proposition is taken from Proposition 2.1 in \cite{Chen151}. To see its derivation and how it was used to prove disorder chaos at positive temperature, we invite the readers to check \cite{Chen151}.
     

    \subsection{RSB bound for the maximal coupled Hamiltonian}
    
    In this subsection, we derive a RSB bound for the maximal coupled Hamiltonian based on  Proposition \ref{prop4}.
    Recall that $(x_\beta,b_\beta)$ stands for the minimizer to the Parisi formula \eqref{prop3:proof:eq0}. Also, recall the sequence $(\beta_n)_{n\geq 1}$ and the quantities $\alpha_0,\delta_0,L_0$ from \eqref{eq-5}. From the second and third inequality of \eqref{add:lem:eq1}, we may assume without loss of generality that the following two sequences converge along the same sequence $(\beta_n)_{n\geq 1},$ \begin{align}
    \begin{split}\label{more:eq9}
    V_0&:=\lim_{n\rightarrow\infty}\int_0^1\xi''(s)\beta_n x_{\beta_n}(s)ds,\\
    V_1&:=\lim_{n\rightarrow\infty}\int_0^1s\xi''(s)\beta_n x_{\beta_n}(s)ds.
    \end{split}
    \end{align}
    Note that $V_0,V_1\leq C_\xi'$ by Lemma \ref{add:lem2}.
    Set
    \begin{align}\label{more:eq1}
    B&=\xi''(1)\delta_0+\delta_0^{-1}
    \end{align}
    and define
    \begin{align}
    \begin{split}\label{more:eq2}
    D(q)&=\left\{
    \begin{array}{ll}
    V_0-\int_0^q\xi''(s)\alpha_0(s)ds,&\mbox{if $q\in[0,1)$},\\
    0,&\mbox{if $q=1$}.
    \end{array}\right.
    \end{split}
    \end{align}
    A crucial fact that will be shown in Lemma \ref{prop3} below is the inequality,
    \begin{align*}
    0\leq D(q)\leq D(0)<B,\,\,\forall q\in [0,1].
    \end{align*}
      For any $r,r'\in(0,1)$ and $u\in[-1,1]$, consider two sets
      \begin{align}
      \begin{split}\label{thm1:eq2}
      A_r(u)&:=\{v\in[-1,1]:|u-v|\geq r\},\\
      \mathcal{A}_{r}(u)&:=\bigl\{(\sigma,\tau)\in S_N\times S_N:R(\sigma,\tau)\in A_r(u)\bigr\}.
      \end{split}
      \end{align}
      Our RSB bound is stated as follows.

      \begin{theorem}\label{lem5}
      	Let $\Lambda$ be a measurable function on $[-1,1]$ with $\|\Lambda\|_\infty<(B-D(0))/2.$ For any $t\in(0,1)$, $u\in(-1,1)$ and $0<\varepsilon<\min(1+u,1-u)$, we have
      	\begin{align}\label{lem5:eq1}
      	\limsup_{N\rightarrow\infty}\e\max_{\mathcal{A}_{\varepsilon}(u)}\frac{H_{N,t}(\sigma,\tau)}{N}&\leq \sup_{v\in A_{\varepsilon/2}(u)}E(t,v,\Lambda(v)),
      	\end{align}
      	where the function $E(t,v,\lambda)$ is defined in the following proposition.
      \end{theorem}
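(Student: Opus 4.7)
The plan is to obtain this RSB bound as the zero-temperature limit of Guerra's positive-temperature bound in Proposition~\ref{prop4}, in direct analogy with the derivation of Theorem~\ref{add:thm1} from the Crisanti-Sommers formula in Section~\ref{sec:parisi}. First I would replace the single maximum on $\mathcal{A}_\varepsilon(u)$ by a finite supremum of maxima over narrow overlap strips: pick a small $\eta>0$ and finitely many centers $v_1,\dots,v_K\in A_{\varepsilon/2}(u)$ whose $\eta$-neighborhoods cover $A_\varepsilon(u)$, so that
$$\max_{\mathcal{A}_\varepsilon(u)}\frac{H_{N,t}(\sigma,\tau)}{N}\leq\max_{1\leq k\leq K}\max_{|R(\sigma,\tau)-v_k|<\eta}\frac{H_{N,t}(\sigma,\tau)}{N}.$$
For each strip, the elementary Laplace upper bound
$$\max_{|R-v|<\eta}\frac{H_{N,t}(\sigma,\tau)}{N}\leq \frac{1}{\beta N}\log\!\int_{|R-v|<\eta} e^{\beta H_{N,t}}\,dm_N\;-\;\frac{1}{\beta N}\log m_N\bigl(\{|R-v|<\eta\}\bigr),$$
together with the standard estimate that $\frac{1}{N}\log m_N(\{|R-v|<\eta\})$ stays bounded uniformly in $v$ on compact subsets of $(-1,1)$, implies that after taking expectation, dividing by $N$, sending $N\to\infty$, and applying Proposition~\ref{prop4} with $(x_\beta,b_\beta)$ the Parisi minimizer, one gets
$$\limsup_{N\to\infty}\mathbb{E}\max_{|R-v|<\eta}\frac{H_{N,t}(\sigma,\tau)}{N}\leq \frac{1}{\beta}\mathcal{P}_\beta\bigl(t,v,x_\beta,b_\beta,\beta\lambda\bigr)+o_\beta(1)$$
for every $\lambda$ with $|\lambda|<(B-D(0))/2$, where I have rescaled the Guerra multiplier $\lambda\mapsto\beta\lambda$ to the zero-temperature scale.

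Next I would pass $\beta=\beta_n\to\infty$ along the subsequence fixed in \eqref{eq-5} and \eqref{more:eq9}. Term by term, $\beta_n^{-1}b_{\beta_n}\to B$ by \eqref{add:thm1:proof:eq0}, and $\beta_n^{-1}d_{\beta_n}^{x_{\beta_n}}(q)\to D(q)$ from the vague convergence $\beta_n x_{\beta_n}\to\alpha_0$ together with the definition \eqref{more:eq2}. Hence each denominator $\beta_n^{-1}\bigl(b_{\beta_n}\pm\iota\beta_n\lambda-d_{\beta_n}^{x_{\beta_n}}(q)\bigr)$ tends to $B\pm\iota\lambda-D(q)$, which remains bounded below by a positive constant uniformly in $q\in[0,1]$ because $0\leq D(q)\leq D(0)<B$ and $|\lambda|<(B-D(0))/2$. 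Using dominated convergence on the non-singular parts and handling the singular endpoint near $q=1$ by the rewriting trick already employed in Section~\ref{sec:parisi} (split each integral at $q_{\beta_n}$, use the identity \eqref{add:thm1:proof:eq-1} to recast the reciprocal denominator as a tail mass of $x_{\beta_n}$, and invoke $\beta_n(1-q_{\beta_n})\to\delta_0\in(0,\infty)$ from Lemma~\ref{add:lem0}), each piece of $\beta_n^{-1}\mathcal{P}_{\beta_n}(t,v,x_{\beta_n},b_{\beta_n},\beta_n\lambda)$ converges to an explicit function of $B,D(\cdot),V_1,\lambda,v,h$. Collecting these limits defines $E(t,v,\lambda)$ and delivers the per-strip estimate
$$\limsup_{N\to\infty}\mathbb{E}\max_{|R-v|<\eta}\frac{H_{N,t}(\sigma,\tau)}{N}\leq E(t,v,\lambda).$$

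Finally, taking the supremum over the finite cover $\{v_k\}$ with the choice $\lambda=\Lambda(v_k)$ and then letting $\eta\to 0$ using the continuity of $v\mapsto E(t,v,\Lambda(v))$ on $(-1,1)$ (which follows from the explicit formula under the bound $\|\Lambda\|_\infty<(B-D(0))/2$), one arrives at \eqref{lem5:eq1}. The main obstacle is the singular endpoint at $q=1$ in the integrals defining $\mathcal{P}_\beta$: the function $x_{\beta_n}$ jumps to $1$ on $[q_{\beta_n},1]$ and the denominators $b_{\beta_n}-d_{\beta_n}^{x_{\beta_n}}(q)$ shrink to order $\beta_n(1-q)$ as $q\to 1$, so naive dominated convergence fails. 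This is precisely the obstruction resolved in the proof of Theorem~\ref{add:thm1} by splitting at $q_{\beta_n}$, invoking \eqref{add:thm1:proof:eq-1}, and using $\delta_0\in(0,\infty)$ from Lemma~\ref{add:lem0}, and the same trick transfers here. A secondary technical point is preserving the admissibility constraint $b_\beta>\int_0^1\xi_\beta''(s)x_\beta(s)\,ds+|\beta\lambda|$ of Proposition~\ref{prop4} in the limit; after division by $\beta_n$ this becomes $B>D(0)+|\lambda|$, so the hypothesis $\|\Lambda\|_\infty<(B-D(0))/2$ supplies exactly the uniform slack required.
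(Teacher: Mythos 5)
The proof has a genuine gap at the very first analytic step: the ``elementary Laplace upper bound''
\begin{align*}
\max_{|R-v|<\eta}\frac{H_{N,t}(\sigma,\tau)}{N}\leq \frac{1}{\beta N}\log\int_{|R-v|<\eta} e^{\beta H_{N,t}}\,dm_N-\frac{1}{\beta N}\log m_N\bigl(\{|R-v|<\eta\}\bigr)
\end{align*}
is false --- it is the reverse of the elementary inequality. From $\int_B e^{\beta H}\,dm\leq m(B)\,e^{\beta\max_B H}$ one gets $\max_B H\geq \beta^{-1}\log\int_B e^{\beta H}dm-\beta^{-1}\log m(B)$, which is a \emph{lower} bound on the maximum, not an upper bound. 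Equivalently, your claimed inequality asserts $m(B)\,e^{\beta\max_B H}\leq\int_B e^{\beta H}dm$, i.e.\ that the average of $e^{\beta H}$ over $B$ exceeds its supremum, which cannot hold. To upper bound $\max_B H$ by the restricted free energy one must argue quantitatively that the integral puts substantial mass near the argmax: identify a ball of explicitly controlled measure centered at the maximizer on which $H$ does not drop by much. That requires both a lower bound on the measure of a small ball in $S_N\times S_N$ (not of the overlap strip, which is a very different and much larger object) and a modulus-of-continuity estimate for $H_{N,t}$ over small balls. This is exactly what the paper does: Lemma~\ref{lem0} controls $\e\max_{\|\sigma-\tau\|\leq\delta\sqrt N}|X_N(\sigma)-X_N(\tau)|$ via Dudley's entropy integral, and Lemma~\ref{lem:dis_correlated_field} uses a $\delta$-net of $S_N\times S_N$ with the resulting two error terms $\e M_{N,t,\delta}/N=O(\delta)$ and $-\log B_{N,\delta}/(N\beta)=O(\beta^{-1}\log(1/\delta))$, sent to zero in the order $N\to\infty$, $\beta\to\infty$, $\delta\to0$.

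A secondary but real omission is the truncation near $|v|=1$. The covering argument needs the small ball around the maximizer to stay inside the overlap constraint set, which fails when the overlap of the maximizer is within $O(\delta)$ of $\pm1$. The paper therefore works with the truncated region $\mathcal{A}_\varepsilon^\kappa(u)$ in Lemma~\ref{lem:dis_correlated_field}, controls the cost of removing the boundary layer in Lemma~\ref{extralem} (an $O(\kappa^{1/2})$ error again via Lemma~\ref{lem0}), and sends $\kappa\to0$ only at the very end. Your proposal's appeal to ``the standard estimate that $N^{-1}\log m_N(\{|R-v|<\eta\})$ stays bounded uniformly in $v$ on compact subsets of $(-1,1)$'' precisely flags the failure near $\pm1$ but then does not do anything about it. Note also (Remark~\ref{rm3}) that the paper's Step II needs the maximizing sequence $v_{\beta_n}$ to stay in $[-1+\kappa/2,1-\kappa/2]$ in order to pass to the limit in $d^{x_{\beta_n}}_{\beta_n,v_{\beta_n}}$; your per-strip variant with a fixed finite cover $\{v_k\}$ would bypass that particular subtlety, but it does not remove the need for the $\kappa$-truncation at the covering step. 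With the wrong Laplace step replaced by the correct chaining argument and the $\kappa$-truncation added, your outline --- covering of the overlap interval, Guerra's two-dimensional bound with the rescaled multiplier $\lambda\mapsto\beta\lambda$, Proposition~\ref{lem2} for the per-point $\beta_n\to\infty$ limit, and the choice $\lambda=\Lambda(v)$ --- becomes essentially the paper's proof.
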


     \begin{proposition}\label{lem2}
     	For any $u\in[-1,1]$, $t\in[0,1]$, and $|\lambda|<B-D(0),$ 
     	\begin{align}\label{lem2:eq1}
     	E(t,u,\lambda):=\lim_{n\rightarrow\infty}\frac{\mathcal{P}_{\beta_n}(t,u,x_{\beta_n},b_{\beta_n},\lambda_{\beta_n})}{\beta_n}
     	\end{align}
     	exists, where $\lambda_{\beta_n}:=\beta_n \lambda.$ The function $E(t,u,\lambda)$ can be computed as		
     	\begin{align}\label{lem2:eq2}
     	E(t,u,\lambda) &=T(t,u,\lambda)+\left\{
     	\begin{array}{ll}
     	\frac{h^2}{B-\lambda-D (0)},&\mbox{if $u\in[0,1]$},\\
     	\frac{h^2}{B-\lambda-D_u (0)},&\mbox{if $u\in[-1,0]$},
     	\end{array}\right.
     	\end{align}
     	where 
     		\begin{align*}
     		T(t,u,\lambda)&:=\frac{1+t}{2}  \int_0^{|u|} \frac{\xi''(q)}{B-\iota\lambda-D (q)} dq+ \frac{1-t}{2}  \int_0^{|u|} \frac{\xi''(q)}{B+\iota\lambda-D_u (q)} dq\\
     		&\quad +\frac{1}{2} \int_{|u|}^1 \frac{\xi''(q)}{B-\lambda-D (q)} dq + \frac{1}{2} \int_{|u|}^1 \frac{\xi''(q)}{B+\lambda-D (q)} dq -\lambda u +\xi''(1)\delta_0+\delta_0^{-1}-V_1
     		\end{align*}
     		for $\iota:=\mbox{sign}(u)$. Here,
     	\begin{align}
     	\label{more:eq3}
     	D_u (q):=\left\{
     	\begin{array}{ll}
     	V_0-\int_0^{|u|}\xi''(s)\alpha_0(s)ds+\frac{1-t}{1+t}\int_q^{|u|}\xi''(s)\alpha_0(s)ds,&\mbox{if $0\leq q\leq |u|<1$},\\
     	\frac{1-t}{1+t}\Bigl(V_0-\int_0^{q}\xi''(s)\alpha_0(s)ds\Bigr),&\mbox{if $0\leq q<|u|=1$},\\
     	0,&\mbox{if $q=|u|=1$},
     	\end{array}
     	\right.
     	\end{align}
     	satisfies $D_u(q)\leq D(0)$ for all $q\in[0,|u|]$.
     \end{proposition}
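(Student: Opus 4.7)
The plan is to compute $\lim_{n\to\infty}\mathcal{P}_{\beta_n}(t,u,x_{\beta_n},b_{\beta_n},\beta_n\lambda)/\beta_n$ term by term, exploiting the auxiliary convergences from Subsection~\ref{subs2.1}. Following the trick used in the proof of Theorem~\ref{add:thm1}, I would first rewrite every quantity in terms of the scaled objects $\beta_n x_{\beta_n}$, $b_{\beta_n}/\beta_n$, and $d_{\beta_n}^{x_{\beta_n}}(q)/\beta_n$, each of which possesses a finite limit. The three preliminary inputs to establish are: (i) from \eqref{add:thm1:proof:eq0}, the mean value theorem, and the definition of $\delta_0$, $b_{\beta_n}/\beta_n \to \xi''(1)\delta_0 + \delta_0^{-1} = B$; (ii) from Lemma~\ref{add:lem} and the vague convergence $\beta_n x_{\beta_n} \to \alpha_0$ on $[0,1)$, dominated convergence on $[0,q]$ for fixed $q<1$ gives $\int_0^q \xi''(s)\beta_n x_{\beta_n}(s)\,ds \to \int_0^q \xi''(s)\alpha_0(s)\,ds$; (iii) subtracting from $\int_0^1 \xi''(s)\beta_n x_{\beta_n}(s)\,ds \to V_0$ yields $d_{\beta_n}^{x_{\beta_n}}(q)/\beta_n \to D(q)$ on $[0,1)$, and the explicit definition of $d_{\beta,u}^{x}$ then gives $d_{\beta_n,u}^{x_{\beta_n}}(q)/\beta_n \to D_u(q)$.

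Next, with $\lambda_{\beta_n} = \beta_n \lambda$, I would verify each piece of \eqref{prop4:eq1}--\eqref{prop4:eq2} scaled by $1/\beta_n$. The logarithmic term vanishes because $b_{\beta_n}^2/(b_{\beta_n}^2 - \beta_n^2\lambda^2) \to B^2/(B^2-\lambda^2)$, a positive constant. The non-integral remainder converges to $-\lambda u + B - V_1$, using $(\log b_{\beta_n})/\beta_n \to 0$ together with the dominated convergence $\int_0^1 q\xi''(q)\beta_n x_{\beta_n}(q)\,dq \to V_1$. The external field term contributes $h^2/(B-\lambda-D(0))$ for $u\in[0,1]$ and $h^2/(B-\lambda-D_u(0))$ for $u\in[-1,0]$, directly from pointwise convergence at $q=0$. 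Each of the four remaining integrals scales to the form $\int \xi''(q)/(b_{\beta_n}/\beta_n - c - d_{\beta_n}^{x_{\beta_n}}(q)/\beta_n)\,dq$ (with appropriate sign $c\in\{\pm\lambda,\pm\iota\lambda\}$, and with $d_{\beta_n}^{x_{\beta_n}}$ possibly replaced by $d_{\beta_n,u}^{x_{\beta_n}}$), and the integrands converge pointwise on $[0,1)$ to their claimed limits.

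The hard part will be justifying dominated convergence for these four integrals, as the pointwise bound from Lemma~\ref{add:lem} on $\beta_n x_{\beta_n}(s)$ is not integrable against $\xi''(s)$ near $s=1$. I would resolve this by invoking the monotonicity of $q \mapsto d_\beta^x(q)$, which yields the uniform lower bound
\begin{align*}
b_{\beta_n}/\beta_n - |\lambda| - d_{\beta_n}^{x_{\beta_n}}(q)/\beta_n \;\geq\; b_{\beta_n}/\beta_n - |\lambda| - d_{\beta_n}^{x_{\beta_n}}(0)/\beta_n \;\longrightarrow\; 1/L_0 - |\lambda|,
\end{align*}
where the identity $B - D(0) = 1/L_0$ comes from dividing \eqref{add:thm1:proof:eq-1} at $q=0$ by $\beta_n$ and passing to the limit. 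Under the hypothesis $|\lambda| < B - D(0)$, this bound is uniformly positive for large $n$, so each integrand is dominated by a constant multiple of $\xi''(q)$ and dominated convergence applies. The analogous bound for the $d_{\beta_n,u}^{x_{\beta_n}}$-integrals follows from the auxiliary claim $D_u(q) \leq D(0)$, which I would derive from the identity $D_u(q) = \frac{1-t}{1+t}D(q) + \frac{2t}{1+t}D(|u|)$ (valid for $|u|<1$) together with the monotonicity of $D$ and $t\in[0,1]$; the case $|u|=1$ reduces to $D_u(q) = \frac{1-t}{1+t}D(q) \leq D(0)$. Assembling these limits produces the explicit formula \eqref{lem2:eq2} and simultaneously yields the stated bound $D_u(q) \leq D(0)$.
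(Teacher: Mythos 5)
Your argument is essentially the same as the paper's: compute the scaled limit of $\mathcal{P}_{\beta_n}$ term by term using the convergences $b_{\beta_n}/\beta_n\to B$, $d_{\beta_n}^{x_{\beta_n}}(\cdot)/\beta_n\to D(\cdot)$, $d_{\beta_n,u}^{x_{\beta_n}}(\cdot)/\beta_n\to D_u(\cdot)$ from Lemma~\ref{prop3}, and justify dominated convergence in the four integral terms via the uniform positive lower bound on $b_{\beta_n}/\beta_n-|\lambda|-d_{\beta_n}^{x_{\beta_n}}(q)/\beta_n$ coming from monotonicity and $|\lambda|<B-D(0)=1/L_0$. The one small place you differ is in deriving $D_u(q)\leq D(0)$: you write the clean post-limit identity $D_u(q)=\frac{1-t}{1+t}D(q)+\frac{2t}{1+t}D(|u|)$ (for $|u|<1$; and $D_u(q)=\frac{1-t}{1+t}D(q)$ for $|u|=1$) and conclude from the monotonicity of $D$ together with $D\geq 0$, while the paper instead bounds the pre-limit quantity $d_{\beta_n,u}^{x_{\beta_n}}(q)/\beta_n\leq\int_0^1\xi''(s)\beta_nx_{\beta_n}(s)ds$ and passes to the limit. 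Both are valid and buy the same thing; your identity is perhaps a bit more transparent. Two cosmetic points: the boundary case $q=|u|=1$ (where $D_u(1)=0\leq D(0)$ is trivial by Lemma~\ref{prop3}) is not explicitly mentioned; and the claim that $B-D(0)=1/L_0$ follows from ``\eqref{add:thm1:proof:eq-1} at $q=0$'' glosses over the fact that $0$ need not lie in $\mathrm{supp}\,\mu_{\beta_n}$, so one should first apply \eqref{add:thm1:proof:eq-1} at $q'=\min\mathrm{supp}\,\mu_{\beta_n}$ and then use that $x_{\beta_n}=0$ on $[0,q')$, or simply cite \eqref{prop3:proof:eq2}. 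Neither affects the correctness of the argument.
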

     
     The RSB bound \eqref{lem5:eq1} will play an essential role in controlling the maximal coupled Hamiltonian. 
     This inequality is obtained by the scaled inverse temperature limit of the functional $\mathcal{P}_{\beta}(t,u,x,b,\lambda)$ along the sequence $(\beta_n).$ With an extra effort, it seems possible that one can derive a two-dimensional generalization of the Crisanti-Sommers functional $\mathcal{Q}_\beta$ for the coupled free energy $F_\beta(t,u)$ and use it to derive an analogous  bound for the maximal coupled Hamiltonian.
         
     Throughout the remainder of this subsection, we establish the proof of Theorem \ref{lem5} and Proposition~\ref{lem2}. We begin with the following lemma.
     
        \begin{lemma}\label{prop3}
        	Recall $B$ and $D(q)$ respectively from \eqref{more:eq1} and \eqref{more:eq2}. They satisfy
        		\begin{align}\label{prop3:eq1}
        		0\leq D(q)\leq D(0)<B,\,\,\forall q\in [0,1].
        		\end{align}
        \end{lemma}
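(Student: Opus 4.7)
The statement decomposes into three claims: $D(q)\ge 0$, $D(q)\le D(0)$, and the strict inequality $D(0)<B$. The first two are routine consequences of the definitions together with the vague convergence $\beta_n x_{\beta_n}\to\alpha_0$, while the strict inequality is the real content and requires the optimality identity from Lemma~\ref{lem-2}.

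For the easy part, on $[0,1)$ one has $D(q)=V_0-\int_0^q\xi''(s)\alpha_0(s)\,ds$, and since $\alpha_0\in\mathcal{N}$ is nonnegative and $\xi''\ge 0$, the integral is nondecreasing in $q$, yielding $D(q)\le D(0)=V_0$; the case $q=1$ is trivial since $D(1)=0\le V_0$. For $D(q)\ge 0$, the monotonicity of $\beta_n x_{\beta_n}$ together with vague convergence provides pointwise convergence at continuity points of $\alpha_0$, and Fatou's lemma then gives
$$\int_0^q\xi''(s)\alpha_0(s)\,ds\;\le\;\liminf_{n\to\infty}\int_0^q\xi''(s)\beta_n x_{\beta_n}(s)\,ds\;\le\;V_0,$$
so $D(q)\ge 0$ for every $q\in[0,1)$.

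For the strict inequality $D(0)<B$, the crude bound $B\ge V_0$ obtained by passing \eqref{eq3} to the limit is not sharp enough. My plan is to extract a quantitative finite-$\beta$ identity from Lemma~\ref{lem-2}: set $s_\beta:=\inf\mathrm{supp}(\mu_\beta)\in[0,q_\beta]$, so that $s_\beta\in\mathrm{supp}(\mu_\beta)$ (the support is closed) and $x_\beta\equiv 0$ on $[0,s_\beta)$. Applying \eqref{add:thm1:proof:eq-1} at $q=s_\beta$, and using that the vanishing of $x_\beta$ to the left of $s_\beta$ allows one to replace $\int_{s_\beta}^1$ by $\int_0^1$ in both terms, one obtains the uniform identity
$$b_\beta\;=\;\int_0^1\xi_\beta''(s)x_\beta(s)\,ds\;+\;\frac{1}{\int_0^1 x_\beta(s)\,ds},$$
valid whether or not $0\in\mathrm{supp}(\mu_\beta)$. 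Dividing by $\beta$ and taking the limit along $(\beta_n)$, using $b_{\beta_n}/\beta_n\to B$ (which follows from \eqref{add:thm1:proof:eq0} together with $q_{\beta_n}\to 1$ and $\beta_n(1-q_{\beta_n})\to\delta_0$), the definition of $V_0$, and $\int_0^1\beta_n x_{\beta_n}(s)\,ds\to L_0$, I would conclude $B=V_0+1/L_0=D(0)+1/L_0$. Since $L_0\ge\delta_0>0$ by Lemma~\ref{add:lem0}, the strict inequality follows.

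The main obstacle is this third part: the admissibility condition \eqref{eq3} alone cannot produce a strict gap in the limit. The key maneuver is to evaluate the sharper identity \eqref{add:thm1:proof:eq-1} at $s_\beta=\inf\mathrm{supp}(\mu_\beta)$, where the vanishing of $x_\beta$ on $[0,s_\beta)$ collapses the expression into a form independent of whether $0\in\mathrm{supp}(\mu_\beta)$; this yields a single clean asymptotic identity $B=D(0)+1/L_0$ that makes the positive gap $1/L_0$ explicit.
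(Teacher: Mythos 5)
Your proof is correct and follows essentially the same route as the paper: the non-strict inequalities come from recognizing $D(q)$ as the $\beta_n$-scaled limit of the nonnegative, nonincreasing function $d_{\beta_n}^{x_{\beta_n}}(q)$ (you invoke Fatou directly with $\alpha_0$, the paper uses bounded convergence via \eqref{eq-1}, but these are equivalent), and the strict inequality $D(0)<B$ comes from evaluating \eqref{add:thm1:proof:eq-1} at $q'=\inf\mathrm{supp}(\mu_\beta)$, using that $x_\beta$ vanishes to the left of this point to obtain $b_{\beta_n}-d_{\beta_n}^{x_{\beta_n}}(0)=1/\int_0^1 x_{\beta_n}$, and passing to the limit to get $B-D(0)=1/L_0>0$. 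This is exactly the paper's argument, including the key observation about $\inf\mathrm{supp}(\mu_\beta)$ belonging to the support.
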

        
        \begin{proof}	
        	Recall the function $d_\beta^x(q)$ from \eqref{parisi}. Applying \eqref{add:thm1:proof:eq0} yields
        	\begin{align*}
        	\lim_{n\rightarrow\infty}\frac{b_{\beta_n}}{{\beta_n}}&=\lim_{n\rightarrow\infty}\Bigl({\beta_n}(\xi'(1)-\xi'(q_{\beta_n}))+\frac{1}{{\beta_n}(1-q_{\beta_n})}\Bigr)=\xi''(1)\delta_0+\delta_0^{-1}=B.
        	\end{align*}
        	For $q\in[0,1),$ by \eqref{eq-1} and the bounded convergence theorem,
        	\begin{align*}
        	\lim_{n\rightarrow\infty}\frac{d_{\beta_n}^{x_{\beta_n}}(q)}{{\beta_n}}&=	\lim_{n\rightarrow\infty}\Bigl(\int_0^1\xi''(s){\beta_n} x_{\beta_n}(s)ds-\int_0^q\xi''(s){\beta_n} x_{\beta_n}(s)ds\Bigr)\\&=V_0-\int_0^q\xi''(s)\alpha(s)ds\\
        	&=D(q)
        	\end{align*}
        	and for $q=1,$
        	\begin{align*}
        	\lim_{n\rightarrow\infty}\frac{d_{\beta_n}^{x_{\beta_n}}(1)}{{\beta_n}}&=0=D(1).
        	\end{align*}
        	To see the strict inequality between $D(0)$ and $B$, we apply \eqref{add:thm1:proof:eq-1} to get
        	\begin{align*}
        	b_{\beta_n}-d_{\beta_n}^{x_{\beta_n}}(q')&=\frac{1}{\int_{q'}^1x_{\beta_n}(s)ds}
        	\end{align*}
        	for $q'$ the minimum of the support of $\mu_{\beta_n}.$ Since $x_{\beta_n}=0$ on $[0,q')$, this equation can be rewritten as
        	\begin{align*}
        	b_{\beta_n}-d_{\beta_n}^{x_{\beta_n}}(0)&=\frac{1}{\int_0^1x_{\beta_n}(s)ds}.
        	\end{align*}
        	Thus, by the definition of $L_0$ and the fact that $L_0>0,$
        	\begin{align}\label{prop3:proof:eq2}
        	B-D(0)&=\lim_{n\rightarrow\infty}\frac{1}{{\beta_n}}\bigl(b_{\beta_n}-d_{\beta_n}^{x_{\beta_n}}(0)\bigr)=\frac{1}{L_0}>0,
        	\end{align}
        	which gives \eqref{prop3:eq1}.
        \end{proof}

        \begin{proof}[\rm \bf Proof of Proposition \ref{lem2}]
        	Recall ${\beta_n}^{-1}\mathcal{P}_{\beta_n}$ and ${\beta_n}^{-1}T_{\beta_n}$ from Proposition \ref{prop4}. Take $$(x,b,\lambda)=(x_{\beta_n},b_{\beta_n},{\beta_n}\lambda).$$ Observe that from \eqref{prop3:proof:eq2} and the monotonicity of $d_{{\beta_n}}^{x_{\beta_n}}$ in $q$, we have that
        	\begin{align}
        	\label{eq.0}
        	b_{\beta_n}-d_{\beta_n}^{x_{\beta_n}}(q)\geq \frac{1}{2L_0}
        	\end{align}
        	for all $q\in[0,1]$ and sufficiently large $n.$ From the dominated convergence theorem, this combined with the limits we obtained in the proof of Lemma \ref{prop3} suggests that we only need to handle the limit of ${\beta_n}^{-1}d_{{\beta_n},u}^{x_{\beta_n}}$ in the equations \eqref{prop4:eq1} and \eqref{prop4:eq2} and ensure that $D_u(q)\leq D(0)$ for all $q\in[0,|u|].$ These can be justified as follows. If $|u|<1,$ we write
        	\begin{align}\label{eq-12}
        	\begin{split}
        	\frac{d_{{\beta_n},u}^{x_{\beta_n}}(q)}{{\beta_n}}&=\int_0^1\xi''(s){\beta_n} x_{\beta_n}(s)ds-\int_0^{|u|}\xi''(s){\beta_n} x_{\beta_n}(s)ds+\frac{1-t}{1+t}\int_{q}^{|u|}\xi''(s){\beta_n} x_{\beta_n}(s)ds\\
        	&\leq \int_0^1\xi''(s){\beta_n} x_{\beta_n}(s)ds
        	\end{split}
        	\end{align}
        	and pass to limit to get
        	\begin{align*}
        	D_u(q)&=V_0-\int_0^{|u|}\xi''(s)\alpha_0(s)ds+\frac{1-t}{1+t}\int_{q}^{|u|}\xi''(s)\alpha_0(s)ds\leq D(0).
        	\end{align*}
        	For $|u|=1,$ we write 
        	\begin{align*}
        	\frac{d_{{\beta_n},u}^{x_{\beta_n}}(q)}{{\beta_n}}&=\frac{1-t}{1+t}\Bigl(\int_0^1\xi''(s){\beta_n} x_{\beta_n}(s)ds-\int_{0}^{q}\xi''(s){\beta_n} x_{\beta_n}(s)ds\Bigr)\leq \int_0^1\xi''(s){\beta_n} x_{\beta_n}(s)ds.
        	\end{align*}
        	From this, passing to limit implies that for $q\in[0,1)$,
        	\begin{align*}
        	D_u(q)&=\frac{1-t}{1+t}\Bigl(V_0-\int_{0}^{q}\xi''(s)\alpha_0(s)ds\Bigr)\leq D(0)
        	\end{align*}
        	and for $q=1,$  
        	$$
        	D_u(1)=0\leq D(0).
        	$$
        	This finishes our proof.
        \end{proof}
        
        Next we turn to the proof of Theorem \ref{lem5}. 
        To derive \eqref{lem5:eq1}, one will control the RSB bound $\mathcal{P}_{\beta_n}(t,v,x,b,\lambda)$ uniformly over all $v\in \mathcal{A}_\varepsilon(u)=[-1,u-\varepsilon]\cup[u+\varepsilon,1]$ as $n\rightarrow\infty.$ Since $(\beta_nx_{\beta_n})_{n\geq 1}$ is only vaguely convergent on $[0,1)$, a more delicate treatment is needed, especially when we deal with the scaling limit of the term $d_{\beta,v}^{x}$ for $|v|$ being very close to $1$  (see the discussion in Remark \ref{rm3} below). 
        We shall control the value of $|v|$ to stay away from $1$ by considering the following two sets,
        \begin{align}
        \begin{split}\label{thm1:eq3}
        A_{r}^{r'}(u)&:=\{v\in[-1+r',1-r']:|u-v|\geq r\},\\
        \mathcal{A}_{r}^{r'}(u)&:=\bigl\{(\sigma,\tau)\in S_N\times S_N:R(\sigma,\tau)\in A_r^{r'}(u)\bigr\}
        \end{split}
        \end{align}
        for $r,r'\in (0,1)$ and $u\in[-1,1].$ We device two technical lemmas.

        \begin{lemma}\label{lem:dis_correlated_field}  
        	For any $\varepsilon,\kappa\in(0,1)$, $t\in[0,1]$ and $u\in[-1,1]$, we have
        	\begin{align}
        	\begin{split}\label{lem:dis_correlated_field:eq1}
        	&\limsup_{N\rightarrow\infty}\e\frac{\max_{ \mathcal{A}_{\varepsilon}^\kappa(u)}H_{N,t}(\sigma,\tau)}{N}\\
        	&\leq \limsup_{\beta\rightarrow\infty}\limsup_{N\rightarrow\infty}
        	\frac{1}{N{\beta}}\e\log\int_{ \mathcal{A}_{\varepsilon/2}^{\kappa/2}(u)}\exp\bigl({\beta} H_{N,t}(\sigma,\tau)\bigr)dm_N(\sigma,\tau).
        	\end{split}
        	\end{align} 	
        \end{lemma}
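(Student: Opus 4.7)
The plan is to approximate the maximum by a free energy at large inverse temperature, restricted to a small ball that is guaranteed to sit inside the enlarged overlap window $\mathcal{A}_{\varepsilon/2}^{\kappa/2}(u)$. For each $N$, let $(\sigma^*,\tau^*)$ denote a maximizer of $H_{N,t}$ on the compact set $\mathcal{A}_\varepsilon^\kappa(u)$. Fix a small $\eta>0$ and let $B_N$ be the set of pairs $(\sigma,\tau)\in S_N\times S_N$ whose normalized Euclidean distance to $(\sigma^*,\tau^*)$ is at most $\eta$. Since the overlap $R(\cdot,\cdot)$ is Lipschitz on $S_N\times S_N$ in the normalized Euclidean distance with a universal constant, one checks that $B_N\subset \mathcal{A}_{\varepsilon/2}^{\kappa/2}(u)$ as soon as $\eta$ is small enough depending only on $\varepsilon$ and $\kappa$.

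The second ingredient is uniform control of the Gaussian oscillation of $H_{N,t}$ on $B_N$. Applying the Dudley entropy integral and Borell-TIS concentration to the centered Gaussian process $(\sigma,\tau)\mapsto H_{N,t}(\sigma,\tau)-H_{N,t}(\sigma^*,\tau^*)$ on $B_N$ (essentially the same computation that produces \eqref{addition:eq1}, as detailed in Remark \ref{rmk2}), one obtains that for every $\delta>0$ there exists $\eta_0=\eta_0(\delta)>0$ such that, whenever $\eta\leq \eta_0$,
$$
\p\Bigl(\sup_{(\sigma,\tau)\in B_N}\bigl|H_{N,t}(\sigma,\tau)-H_{N,t}(\sigma^*,\tau^*)\bigr|>\delta N\Bigr)\longrightarrow 0,\quad N\to\infty.
$$
Third, the uniform probability measure $m_N$ satisfies a volume lower bound $m_N(B_N)\geq e^{-c(\eta)N}$ for some $c(\eta)$ independent of the center, by standard sphere geometry.

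Combining these three ingredients, on the high probability event above,
$$
\frac{1}{N\beta}\log\int_{\mathcal{A}_{\varepsilon/2}^{\kappa/2}(u)}e^{\beta H_{N,t}(\sigma,\tau)}dm_N(\sigma,\tau)\ \geq\ \frac{\log m_N(B_N)}{N\beta}+\inf_{B_N}\frac{H_{N,t}(\sigma,\tau)}{N}\ \geq\ \frac{H_{N,t}(\sigma^*,\tau^*)}{N}-\delta-\frac{c(\eta)}{\beta}.
$$
Passing to expectations requires absorbing the complementary event, on which one uses the uniform $L^1$ bound for $\max |H_{N,t}|/N$ coming from \eqref{addition:eq1} applied to each independent Gaussian component, plus the trivial bound $|h\sum_i\sigma_i|/N\leq |h|$ for the external field; since the complementary probability is $o(1)$ and the maximum has a uniformly bounded $L^2$ tail (again by Borell-TIS), this remainder vanishes as $N\to\infty$. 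Taking $\limsup_{N\to\infty}$, then $\limsup_{\beta\to\infty}$, and finally sending $\delta\to 0$ with $\eta=\eta_0(\delta)\to 0$ yields \eqref{lem:dis_correlated_field:eq1}.

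The main obstacle is the uniform-in-$N$ Gaussian oscillation estimate on $B_N$: what is needed is a localized Dudley/Borell bound stating that the sup norm of the centered Gaussian increment $H_{N,t}(\cdot,\cdot)-H_{N,t}(\sigma^*,\tau^*)$ over a ball of normalized radius $\eta$ is at most $\delta N$ with overwhelming probability, with $\delta\to 0$ as $\eta\to 0$ uniformly in $N$ and uniformly over the random center. This is standard, but the entropy integral must be computed on the restricted ball and combined with a net-plus-concentration argument that covers the dependence of $(\sigma^*,\tau^*)$ on the disorder; this is the mechanism by which the inequality between the maximum and a restricted free energy acquires the correct asymptotic sharpness.
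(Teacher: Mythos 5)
Your proposal is correct in substance but routes through a genuinely different bookkeeping than the paper's argument, so a comparison is worth making. The paper proceeds entirely in $L^1$ with a \emph{deterministic} discretization: it takes a $\delta$-net $\mathcal{D}_{N,\delta}$ of $S_N\times S_N$, picks out the subcollection $\mathcal{D}'_{N,\delta}$ of net points whose $\delta$-balls meet $\mathcal{A}_\varepsilon^\kappa(u)$, verifies $B_{N,\delta}(\sigma,\tau)\subset\mathcal{A}_{\varepsilon/2}^{\kappa/2}(u)$ for $\delta<\min(\varepsilon,\kappa)/8$, and feeds the \emph{expected} global modulus of continuity $\e M_{N,t,\delta}\leq \eta_0\delta N$ (Lemma \ref{lem0}) directly into a log-sum-exp chain; the entropy cost is $\frac{\log|\mathcal{D}'_{N,\delta}|}{N\beta}-\frac{\log B_{N,\delta}}{N\beta}=O(1/\beta)$, and there is no need to split into good and bad events. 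You instead center a ball of normalized radius $\eta$ at the \emph{random} maximizer $(\sigma^*,\tau^*)$, use Dudley plus Borell--TIS to control the local oscillation with high probability, and absorb the bad event via $L^2$ tails of the maximum. Both routes yield the same conclusion; the paper's is a bit cleaner because it never leaves $L^1$ and so avoids the complementary-event step.

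On the ``main obstacle'' you flag: the dependence of $(\sigma^*,\tau^*)$ on the disorder is in fact a non-issue, and you do not need any net-plus-concentration argument that is ``uniform over the random center.'' Observe that
\begin{align*}
\sup_{(\sigma,\tau)\in B_N}\bigl|H_{N,t}(\sigma,\tau)-H_{N,t}(\sigma^*,\tau^*)\bigr|
\leq \sup_{\|(\sigma^1-\sigma^2,\tau^1-\tau^2)\|\leq \eta N^{1/2}}\bigl|H_{N,t}(\sigma^1,\tau^1)-H_{N,t}(\sigma^2,\tau^2)\bigr|=:M_{N,t,\eta},
\end{align*}
and the right-hand side is a global modulus of continuity of the Gaussian field that makes no reference to the random center. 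Its expectation is bounded by $\eta_0\eta N$ via the Dudley entropy integral exactly as in Lemma \ref{lem0} and inequality \eqref{lem-1:eq1}; this removes the conditioning issue entirely. Once you notice this, you can even skip the Borell--TIS step and simply insert $\e M_{N,t,\eta}$ into an $L^1$ chain of inequalities as the paper does, which streamlines the ``passing to expectations'' paragraph of your write-up. So the proposal is correct, but the difficulty you foresee is one that the standard reformulation dissolves, and the paper exploits exactly that.
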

        
        \begin{lemma}\label{extralem}  
        	Let $u\in(-1,1)$ and $t\in[0,1].$ Assume that 
        	\begin{align}
        	\begin{split}\label{cond}
        	&0<\varepsilon<\min(1+u,1-u),\\
        	&0<\kappa<\min(1+u-\varepsilon,1-u-\varepsilon,1/2).
        	\end{split}
        	\end{align}
        	There exists some constant $C>0$ independent of $N$ such that
        	\begin{align}\label{lem:dis_correlated_field:eq2}
        	\e\frac{\max_{ \mathcal{A}_{\varepsilon}(u)}H_{N,t}(\sigma,\tau)}{N}\leq\e\frac{\max_{ \mathcal{A}_{\varepsilon}^\kappa(u)}H_{N,t}(\sigma,\tau)}{N}+C\kappa^{1/2}.
        	\end{align}
        \end{lemma}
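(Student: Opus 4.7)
\noindent\textbf{Proof sketch of Lemma~\ref{extralem}.}
Under the hypothesis~\eqref{cond} one immediately checks that
$A_\varepsilon(u)\setminus A_\varepsilon^\kappa(u)=[-1,-1+\kappa)\cup(1-\kappa,1]$, so that $\mathcal{A}_\varepsilon(u)=\mathcal{A}_\varepsilon^\kappa(u)\cup\mathcal{B}$, where
\[
\mathcal{B}:=\bigl\{(\sigma,\tau)\in S_N\times S_N:|R(\sigma,\tau)|>1-\kappa\bigr\}.
\]
Consequently, it is enough to prove the one-sided inequality
\[
\e\,\frac{\max_{\mathcal{B}} H_{N,t}(\sigma,\tau)}{N}\ \le\ \e\,\frac{\max_{\mathcal{A}_\varepsilon^\kappa(u)} H_{N,t}(\sigma,\tau)}{N}+C\kappa^{1/2}.
\]

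The strategy is to push every $(\sigma,\tau)\in\mathcal{B}$ to a nearby element of $\mathcal{A}_\varepsilon^\kappa(u)$ by a small modification of $\tau$. Given such a pair, set $r:=R(\sigma,\tau)$, $\iota:=\mathrm{sign}(r)$, and decompose $\tau=r\sigma+\sqrt{1-r^2}\,\tau_\perp$ where $\tau_\perp\in S_N$ is orthogonal to $\sigma$ (up to the measure-zero set $|r|=1$, in which case any such $\tau_\perp$ works). Define
\[
\tau'':=\iota(1-\kappa)\sigma+\sqrt{1-(1-\kappa)^2}\,\tau_\perp.
\]
Then $R(\sigma,\tau'')=\iota(1-\kappa)$, and by~\eqref{cond} this lies in $A_\varepsilon^\kappa(u)$, so $(\sigma,\tau'')\in\mathcal{A}_\varepsilon^\kappa(u)$. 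Using $|r-\iota(1-\kappa)|\le\kappa$ and $\sqrt{1-r^2}\le\sqrt{2\kappa-\kappa^2}$, a direct computation yields $\|\tau-\tau''\|^2\le 3N\kappa$. Since $\sigma$ is unchanged, one has $H_{N,t}(\sigma,\tau)-H_{N,t}(\sigma,\tau'')=H_{N,t}^2(\tau)-H_{N,t}^2(\tau'')$, so the problem reduces to controlling short-range increments of the single Hamiltonian $H_{N,t}^2$ on $S_N$.

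The key remaining ingredient is a uniform modulus-of-continuity estimate
\[
\e\sup_{\substack{\tau,\tau''\in S_N\\ \|\tau-\tau''\|\le\sqrt{3N\kappa}}}\bigl|H_{N,t}^2(\tau)-H_{N,t}^2(\tau'')\bigr|\ \le\ C N\sqrt{\kappa},
\]
with $C$ depending only on $\xi$ and $h$. The deterministic external-field piece $h\sum_i(\tau_i-\tau_i'')$ contributes at most $|h|\sqrt{3\kappa}\,N$ by Cauchy--Schwarz. For the centered Gaussian part $\sqrt{t}X_N+\sqrt{1-t}X_N^2$, the identity $\e|X_N(\tau)-X_N(\tau'')|^2=2N(\xi(1)-\xi(R(\tau,\tau'')))$ together with $1-R(\tau,\tau'')=\|\tau-\tau''\|^2/(2N)\le 3\kappa/2$ gives a pointwise $L^2$-increment of order $\sqrt{N\kappa}$; feeding this into Dudley's entropy integral over $S_N$, exactly as in the derivation of~\eqref{addition:eq1} recalled in Remark~\ref{rmk2}, yields the claimed $O(N\sqrt\kappa)$ uniform bound.

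Combining the geometric construction with this regularity estimate, taking $\max$ over $\mathcal{B}$ and then expectation, produces~\eqref{lem:dis_correlated_field:eq2}. The main technical obstacle is the uniform modulus of continuity above: the geometric perturbation and the pointwise $L^2$ computation are elementary, but the chaining/covering argument on the sphere is exactly the sort of regularity statement the authors defer to the appendix. An alternative route would be to prove a high-probability gradient bound $\e\max_{\tau\in S_N}\|\nabla H_{N,t}^2(\tau)\|\le C\sqrt{N}$ for the tangential gradient and conclude via the mean value theorem on $S_N$, but this relies on an auxiliary regularity lemma of essentially the same flavor.
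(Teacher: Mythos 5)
Your argument follows the same strategy as the paper's: decompose $\mathcal{A}_\varepsilon(u)$ into $\mathcal{A}_\varepsilon^\kappa(u)$ together with the thin caps near $|R|=1$, perturb $\tau$ by $O(\sqrt{\kappa N})$ (keeping $\sigma$ fixed) to land in $\mathcal{A}_\varepsilon^\kappa(u)$, and absorb the resulting increment with the Dudley-type modulus-of-continuity bound that the paper proves in the appendix as Lemma~\ref{lem0}. The only cosmetic difference is that you construct the replacement $\tau''$ via an explicit orthogonal decomposition $\tau=r\sigma+\sqrt{1-r^2}\,\tau_\perp$, whereas the paper simply picks some $\tau'$ with $\|\sigma\mp\tau'\|_2=(2\kappa N)^{1/2}$ and estimates $\|\tau-\tau'\|_2$ by a triangle inequality; both give the same $O(\sqrt{\kappa N})$ displacement. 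Your identification of the Dudley chaining estimate as the one genuinely nontrivial ingredient is exactly right — this is precisely what Lemma~\ref{lem0} supplies, so the proof is complete once that lemma is invoked.
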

        
        The above two lemmas allow us to control the ground state energy of $H_{N,t}$ restricted to $\mathcal{A}_\varepsilon(u)$ via the coupled free energy. As their proofs are not directly related to our main arguments, we shall defer the details to the appendix.

        \begin{proof}[\rm \bf Proof of Theorem \ref{lem5}] Let $(x_{\beta_n},b_{\beta_n})$ be the minimizer to the Parisi formula \eqref{prop3:proof:eq0}. Recall $E(t,u,\lambda)$ from Proposition~\ref{lem2}. Denote $\lambda_{\beta_n}(v)={\beta_n}\Lambda(v).$  For $r\in(0,1)$ and $v\in[-1,1],$ denote by $\mathcal{A}_{r}^c(v)$ the complement of $\mathcal{A}_{r}(v)$. This proof has three major steps:
        	
        	{\bf Step I:} From Proposition \ref{prop4}, 
        	\begin{align*}
        	&\lim_{\eta\rightarrow 0}\limsup_{N\rightarrow\infty}\frac{1}{N}\e\log \int_{\mathcal{A}_{\eta}^c(v)}\exp\bigl({\beta_n} H_{N,t}(\sigma,\tau)\bigr)dm_N(\sigma,\tau)\leq \mathcal{P}_{\beta_n}(t,v,x_{\beta_n},b_{\beta_n},\lambda_{\beta_n}(v))
        	\end{align*}
        	for all $v\in[-1,1]$. From this, for any given $\delta>0$ and $v\in A_{\varepsilon/2}(u)$, there exist $\eta(v)>0$ and $N(v)\in\mathbb{N}$ such that
        	\begin{align}\label{lem5:proof}
        	\frac{1}{N}\e\log \int_{\mathcal{A}_{\eta}^c(v)}\exp\bigl({\beta_n} H_{N,t}(\sigma,\tau)\bigr)dm_N(\sigma,\tau)\leq \mathcal{P}_{\beta_n}(t,v,x_{\beta_n},b_{\beta_n},\lambda_{\beta_n}(v))+\delta
        	\end{align}
        	for all $0<\eta\leq \eta(v)$ and $N\geq N(v).$ Consider
        	$$
        	0<\kappa<\min(1+u-\varepsilon,1-u+\varepsilon).
        	$$
        	Note that the quantities $u,\varepsilon,\kappa$ satisfy the condition \eqref{cond}.
        	Since $A_{\varepsilon/2}^{\kappa/2}(u)$ is compact, we can find $v_1,\ldots,v_k\in A_{\varepsilon/2}^{\kappa/2}(u)$ such that $I_{j}:=(v_j-\eta(v_j),v_j+\eta(v_j))$ for $j=1,\ldots,k$ form an open covering of $A_{\varepsilon/2}^{\kappa/2}(u)$. Note that 
        	$$
        	\{(\sigma,\tau)\in S_N\times S_N:R(\sigma,\tau)\in I_j\}=\mathcal{A}_{\eta(v_j)}^c(v_j).
        	$$
        	Using the open covering $I_1,\ldots,I_k$, we have that
        	\begin{align*}
        	&\frac{1}{N}\e\log \int_{\mathcal{A}_{\varepsilon/2}^{\kappa/2}(u)}\exp \bigl({\beta_n} H_{N,t}(\sigma,\tau)\bigr)dm_N(\sigma,\tau)\\
        	&\leq \frac{1}{N}\e\log \sum_{j=1}^k\int_{R(\sigma,\tau)\in I_j}\exp\bigl({\beta_n} H_{N,t}(\sigma,\tau)\bigr)dm_N(\sigma,\tau)\\
        	&\leq \frac{1}{N}\e\log \Bigl\{k\exp \Bigl(N\max_{1\leq j\leq k}\frac{1}{N}\log \int_{R(\sigma,\tau)\in I_j}\exp\bigl({\beta_n} H_{N,t}(\sigma,\tau)\bigr)dm_N(\sigma,\tau)\Bigr)\Bigr\}\\
        	&=\frac{1}{N}\log k+\e \max_{1\leq j\leq k}\frac{1}{N}\log \int_{R(\sigma,\tau)\in I_j}\exp\bigl({\beta_n} H_{N,t}(\sigma,\tau)\bigr)dm_N(\sigma,\tau)
        	\end{align*}
        	for all $N\geq 1.$ Using the Gaussian concentration inequality (see e.g. \cite{L}) for the coupled free energy and \eqref{lem5:proof}, the last inequality is further bounded above by
        	\begin{align*}
        		&\frac{1}{N}\log k+\max_{1\leq j\leq k}\mathcal{P}_{\beta_n}(t,v_j,x_{\beta_n},b_{\beta_n},\lambda_{\beta_n}(v_j))+\delta+kC\exp(-N/C)\\
        		&\leq \frac{1}{N}\log k+\sup_{v\in A_{\varepsilon/2}^{\kappa/2}(u)}\mathcal{P}_{\beta_n}(t,v,x_{\beta_n},b_{\beta_n},\lambda_{\beta_n}(v))+\delta+kC\exp(-N/C),
        	\end{align*} 
        	where $C$ is a positive constant depending only on $\xi_{\beta_n}$. Letting $N\rightarrow\infty$ and then $\delta\downarrow 0$ gives
        	\begin{align}\label{con1}
        	\limsup_{N\rightarrow\infty}\frac{1}{N}\e\log \int_{\mathcal{A}_{\varepsilon/2}^{\kappa/2}(u)}\exp \bigl({\beta_n} H_{N,t}(\sigma,\tau)\bigr)dm_N(\sigma,\tau)&\leq \sup_{v\in A_{\varepsilon/2}^{\kappa/2}(u)}\mathcal{P}_{\beta_n}(t,v,x_{\beta_n},b_{\beta_n},\lambda_{\beta_n}(v)).
        	\end{align}
        	
        	{\bf Step II:} From \eqref{con1} and the definition of $A_{\varepsilon/2}^{\kappa/2}(u)$, we may assume without loss of generality that there exists a sequence $$
        	(v_{\beta_n})\subset [-1+\kappa/2,1-\kappa/2]$$ such that the following two limits exist, $$v_0:=\lim_{n\rightarrow\infty}v_{\beta_n},\,\,\Lambda_0:=\lim_{n\rightarrow\infty}\Lambda(v_{\beta_{n}}).$$
        	In addition, for any $n\geq 1,$ 
        	\begin{align*}
        	\sup_{v\in A_{\varepsilon/2}^{\kappa/2}(u)}\mathcal{P}_{\beta_n}(t,v,x_{\beta_n},b_{\beta_n},\lambda_{\beta_n}(v))\leq \mathcal{P}_{\beta_n}(t,v_{\beta_n},x_{\beta_n},b_{\beta_n},\lambda_{\beta_n}(v_{\beta_n}))+\frac{1}{{\beta_n}}.
        	\end{align*}
        	By the continuity of $\mathcal{P}_{\beta_n}$ at the variables $u$ and $\lambda$, if $v_0\geq 0,$ we further choose $v_{\beta_n}>0$ for all $n\geq 1;$ if $v_0<0$, we take $v_{\beta_n}<0$ for all $n\geq 1.$
        	Denote $\iota_{n}=\mbox{sign}(v_{\beta_n})$ and $\iota_0=\mbox{sign}(v_0)$. Then $\lim_{n\rightarrow\infty}\iota_{n}=\iota_{v_0}.$ Observe that
        	\begin{align*}
        	\frac{b_{\beta_n}}{{\beta_n}}-\frac{d_{{\beta_n},v_{\beta_n}}^{x_{\beta_n}}(q)}{{\beta_n}}&\geq \frac{b_{\beta_n}}{{\beta_n}} -\frac{d_{\beta_n}^{x_{\beta_n}}(0)}{{\beta_n}}
        	\end{align*}
        	for any $0\leq q\leq |v_{\beta_n}|\leq 1-\kappa/2$ from \eqref{eq-12} and the left-hand side converges to $B-D_{v_0}(q)$ for all $0\leq q<|v_0|$. Consequently, the assumption that $\|\Lambda\|_\infty\leq (B-D(0))/2$ and the dominated convergence theorem together leads to
        	\begin{align*}
        	\lim_{n\rightarrow\infty}\frac{1}{{\beta_n}}\int_0^{|v_{\beta_n}|}\frac{\xi_{\beta_n}''(s)}{b_{\beta_n}+\iota_{n}\lambda_{\beta_n}(v_{\beta_n})-d_{{\beta_n},v_{\beta_n}}^{x_{\beta_n}}(q)}dq=\int_0^{|v_0|}\frac{\xi''(s)}{B+\iota_0 \Lambda_0-D_{v_0}(q)}dq.
        	\end{align*} 
        	Similarly, since
        	\begin{align*}
        	\frac{b_{\beta_n}}{{\beta_n}}-\frac{d_{{\beta_n}}^{x_{\beta_n}}(q)}{{\beta_n}}&\geq \frac{b_{\beta_n}}{{\beta_n}}-\frac{d_{\beta_n}^{x_{\beta_n}}(0)}{{\beta_n}}
        	\end{align*}
        	for any $q\in[0,1]$ and the left-hand side converges to $B-D(q)$ for all $q\in[0,1],$ using the assumption that $\|\Lambda\|_\infty\leq (B-D(0))/2$ and the dominated convergence theorem again yield
        	\begin{align*}
        	\lim_{n\rightarrow\infty}\frac{1}{{\beta_n}}\int_0^{|v_{\beta_n}|}\frac{\xi_{\beta_n}''(s)}{b_{\beta_n}\pm \iota_n\lambda_{\beta_n}(v_{\beta_n})-d_{{\beta_n},v_{\beta_n}}^{x_{\beta_n}}(q)}dq=\int_0^{|v_0|}\frac{\xi''(s)}{B\pm \iota_0\Lambda_0-D_{v_0}(q)}dq
        	\end{align*} 
        	and
        	\begin{align*}
        	\lim_{n\rightarrow\infty}\frac{1}{{\beta_n}}\int_{|v_{\beta_n}|}^1\frac{\xi_{\beta_n}''(s)}{b_{\beta_n}\pm \iota_n\lambda_{\beta_n}(v_{\beta_n})-d_{{\beta_n},v_{\beta_n}}^{x_{\beta_n}}(q)}dq=\int_{|v_0|}^1\frac{\xi''(s)}{B\pm \iota_0\Lambda_0-D_{v_0}(q)}dq.
        	\end{align*} 
        	In summary, from the definition of $\mathcal{P}_{\beta_n}(t,u,x,b,\lambda)$ in Proposition \ref{prop4}, we obtain
        	\begin{align*}
        	\lim_{n\rightarrow\infty}\frac{1}{{\beta_n}}\sup_{v\in A_{\varepsilon/2}^{\kappa/2}(u)}\mathcal{P}_{\beta_n}(t,v,x_{\beta_n},b_{\beta_n},\lambda_{\beta_n}(v))&=E(t,v_0,\Lambda_0).
        	\end{align*}
        	Now, since the function $E(t,\cdot,\cdot)$ is clearly continuous on $[-1+\kappa/2,1-\kappa/2]\times [-(B-D(0))/2,(B-D(0))/2]$, the right-hand side of the last equation can be controlled by
        	\begin{align*}
        	E(t,v_0,\Lambda_0)=\lim_{n\rightarrow\infty}E(t,v_{\beta_n},\Lambda(v_{\beta_n}))\leq \sup_{v\in{A}_{\varepsilon/2}^{\kappa/2}}E(t,v,\Lambda(v))\leq \sup_{v\in{A}_{\varepsilon/2}}E(t,v,\Lambda(v))
        	\end{align*}
        	and thus,
        	\begin{align}\label{con2}
        	\lim_{n\rightarrow\infty}\frac{1}{{\beta_n}}\sup_{v\in A_{\varepsilon/2}^{\kappa/2}(u)}\mathcal{P}_{\beta_n}(t,v,x_{\beta_n},b_{\beta_n},\lambda_{\beta_n}(v))&\leq \sup_{v\in{A}_{\varepsilon/2}}E(t,v,\Lambda(v)).
        	\end{align}
        	
        	{\bf Step III:} From \eqref{lem:dis_correlated_field:eq1}, \eqref{lem:dis_correlated_field:eq2}, \eqref{con1}, and \eqref{con2}, we conclude
        	\begin{align*}
        	\lim_{N\rightarrow\infty}\frac{1}{N}\e\max_{\mathcal{A}_\varepsilon(u)}H_{N,t}(\sigma,\tau)	\leq \sup_{v\in{A}_{\varepsilon/2}}E(t,v,\Lambda(v))+C\kappa^{1/2}.
        	\end{align*}
        	Since this inequality holds for all sufficiently small $\kappa$, the announced inequality follows  evidently.
        \end{proof}
        
        \begin{remark}\label{rm3}
        	\rm In view of the argument for \eqref{con1}, one immediately recognizes that it could also be shown that
        	\begin{align*}
        	\limsup_{N\rightarrow\infty}\frac{1}{N}\e\log \int_{\mathcal{A}_{\varepsilon/2}(u)}\exp \bigl({\beta_n} H_{N,t}(\sigma,\tau)\bigr)dm_N(\sigma,\tau)&\leq \sup_{v\in A_{\varepsilon/2}(u)}\mathcal{P}_{\beta_n}(t,v,x_{\beta_n},b_{\beta_n},\lambda_{\beta_n}(v)).
        	\end{align*}
        	However, it is unclear how to use this inequality to obtain the same upper bound \eqref{lem5:eq1} by adapting the Step II in the previous proof. The main difficulty here is that we do not know how to handle the limit of $$
        	\frac{d_{{\beta_n},v_{\beta_n}}^{x_{\beta_n}}(q)}{{\beta_n}}=\int_{0}^{1}\xi''(s){\beta_n} x_{\beta_n}(s)ds-\int_0^{|v_{\beta_n}|}\xi''(s){\beta_n} x_{\beta_n}(s)ds+\frac{1-t}{1+t}\int_q^{|v_{\beta_n}|}\xi''(s){\beta_n} x_{\beta_n}(s)ds
        	$$
        	if $|v_{\beta_n}|\rightarrow 1$. This technical obstacle could be overcome if we restrict  that $\lim_{n\rightarrow\infty}v_{\beta_n}\in [-1+\kappa/2,1-\kappa/2]$ as implemented in the Step II of the above proof. 
        \end{remark}

    \subsection{Control of $E(t,u,0)$}
	     
	    We study some properties of $E(t,u,0).$ Let $(L_0,\alpha_0)$ be the minimizer of \eqref{add:thm1:eq1}. Define $q_0=\min\mbox{supp}\alpha_0$ if $\mbox{supp}\alpha_0\neq \emptyset$ and $q_0=1$ if $\mbox{supp}\alpha_0=\emptyset.$ We divide our discussion into two parts: $|u|\leq q_0$ and $|u|\geq q_0.$

	    \subsubsection{Case I: $|u|\leq q_0$}
	   
	    For $0\leq t\leq 1$, define
	    $$
	    f_t(u)=L_0^2\bigl(t\xi'(u)+h^2\bigr)-u
	    $$
	    for $u\in[-q_0,q_0]$. If $|u|\leq q_0$, then the function $E(t,u,0)$ exhibits the following properties.

	    \begin{proposition}
	    	\label{lem1}
	    	If $q_0<1$, then for any $|u|\leq q_0$,
	    	\begin{align*}
	    	E(t,u,0)&=2GS,\\
	    	\partial_\lambda E(t,u,0)&=f_t(u).
	    	\end{align*}
	    	If $q_0=1$, then these two equations also hold for all $|u|<1.$
	    \end{proposition}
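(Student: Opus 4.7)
The crucial observation is that when $|u|\leq q_0$, the fact that $\alpha_0\equiv 0$ on $[0,q_0)$ forces both $D(q)$ and $D_u(q)$ to be constant and equal on the range $[0,|u|]$: namely, by the definition \eqref{more:eq2}, $D(q)=V_0$ for all $q\in[0,q_0]$, and by the definition \eqref{more:eq3}, $D_u(q)=V_0$ for all $q\in[0,|u|]$. Consequently the $t$-dependent linear combination of the first two integrals in $T(t,u,0)$ collapses to a single integral, and the last two integrals in $T(t,u,0)$ (which carry opposite signs in $\lambda$) reduce, at $\lambda=0$, to $\int_{|u|}^{1}\xi''(q)/(B-D(q))\,dq$. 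When this is combined with the boundary term (which is $h^2/(B-V_0)$ regardless of the sign of $u$, since $D_u(0)=D(0)=V_0$ in this regime), $E(t,u,0)$ collapses to
\begin{equation*}
E(t,u,0)=\int_{0}^{1}\frac{\xi''(q)}{B-D(q)}\,dq+\frac{h^{2}}{B-V_0}+B-V_{1},
\end{equation*}
an expression free of $t$ and $u$.

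Next, I will identify this quantity with $2GS$ by sending $\beta_n\to\infty$ in the Parisi formula $F(\beta_n)=\mathcal{P}_{\beta_n}(x_{\beta_n},b_{\beta_n})$. Dividing by $\beta_n$ and invoking Lemma \ref{lem:discrete}, the left-hand side tends to $GS$. On the right-hand side I examine each of the four summands of $\mathcal{P}_{\beta}/\beta$ separately. Using $\xi_\beta''=\beta^2\xi''$, the relations $b_{\beta_n}/\beta_n\to B$ and $d_{\beta_n}^{x_{\beta_n}}(q)/\beta_n\to D(q)$ already established in the proof of Lemma \ref{prop3}, together with the uniform lower bound $b_{\beta_n}-d_{\beta_n}^{x_{\beta_n}}(q)\geq \beta_n/(2L_0)$ coming from \eqref{prop3:proof:eq2} and \eqref{eq.0}, the dominated convergence theorem yields termwise convergence to $h^2/(B-V_0)$, $\int_0^1\xi''(q)/(B-D(q))\,dq$, $B$, and $V_1$ respectively (the last by definition of $V_1$ in \eqref{more:eq9}). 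Hence $2GS$ equals the displayed formula for $E(t,u,0)$.

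For the derivative, I differentiate $E(t,u,\lambda)$ in $\lambda$ under the integral sign and then set $\lambda=0$. Treat $u\in[0,q_0]$ first, so $\iota=1$. The two integrals over $[|u|,1]$ produce a signed pair that cancels at $\lambda=0$. The two integrals over $[0,|u|]$ have denominators $B-D(q)$ and $B-D_u(q)$ which are both identically $B-V_0=1/L_0$ on this range, so the combined contribution becomes
\begin{equation*}
\Bigl(\tfrac{1+t}{2}-\tfrac{1-t}{2}\Bigr)L_0^{2}\int_{0}^{u}\xi''(q)\,dq=t L_0^{2}\xi'(u),
\end{equation*}
using $\xi'(0)=0$. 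Adding the contributions $-u$ from the $-\lambda u$ term and $h^{2}/(B-V_0)^{2}=L_0^{2}h^{2}$ from the boundary term gives $\partial_\lambda E(t,u,0)=L_0^{2}(t\xi'(u)+h^{2})-u=f_t(u)$. For $u\in[-q_0,0)$, $\iota=-1$ flips the sign in front of the first two integrals and one obtains instead $-tL_0^{2}\xi'(|u|)-u+L_0^{2}h^{2}$; but $\xi'$ is odd because $\gamma_p=0$ for odd $p$, so $-\xi'(|u|)=\xi'(u)$ and the same formula $f_t(u)$ is recovered.

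The argument contains no genuine obstacle; it is essentially a careful bookkeeping of the simplifications forced by the vanishing of $\alpha_0$ below $q_0$, together with an inverse-temperature limit in the Parisi formula to identify the resulting constant as $2GS$. The only slightly delicate point is justifying the termwise passage to the limit in the Parisi functional, which is handled by the uniform lower bound on $b_{\beta_n}-d_{\beta_n}^{x_{\beta_n}}$ already isolated in Section~\ref{sec:parisi}. The case $q_0=1$ (where $\alpha_0\equiv 0$ everywhere) is covered by exactly the same computation, provided we stay away from $|u|=1$ to preserve the validity of the formulas for $D_u$ in \eqref{more:eq3}.
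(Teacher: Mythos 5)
Your proposal is correct and follows essentially the same route as the paper: you derive the identity $2GS=\frac{h^{2}}{B-D(0)}+\int_{0}^{1}\frac{\xi''(q)}{B-D(q)}\,dq+B-V_{1}$ by passing to the scaled $\beta_n\to\infty$ limit in the Parisi formula (the paper's equation \eqref{prop3:eq3}), observe that $\alpha_0\equiv 0$ on $[0,q_0)$ forces $D(q)=D_u(q)=V_0=D(0)$ on $[0,|u|]$ so that $E(t,u,0)$ collapses to this constant, and then differentiate in $\lambda$ term by term, noting the cancellation of the two integrals over $[|u|,1]$ and using the oddness of $\xi'$ to handle $u<0$. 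This is precisely the paper's argument.
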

	    
	    	   We emphasize that the last two equations are not necessarily valid when $q_0=1$ and $|u|=1$ (see Remark \ref{rm4} below). The next proposition demonstrates a key feature of $f_t.$
	    
	    \begin{proposition}	\label{lem6}
		    	If $0<t<1,$ then $f_t=0$ has a unique solution, $u_t.$ Furthermore, if $h=0,$ then $u_t=0$ and if $h\neq 0,$ then $u_t\in(0,q_0).$ 
	    \end{proposition}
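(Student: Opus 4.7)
The plan is to establish that $f_t$ is strictly decreasing on $[-q_0, q_0]$ and then locate its unique zero from its values at $0$ and $q_0$. Since $f_t'(u) = L_0^2 t \xi''(u) - 1$ and the model being even makes $\xi''$ even and nondecreasing on $[0,1]$, we have $\xi''(u) \leq \xi''(q_0)$ on $[-q_0, q_0]$. So the strict monotonicity reduces to proving the crucial inequality
\[
L_0^2 \xi''(q_0) \leq 1,
\]
which, combined with $t < 1$, gives $f_t'(u) \leq t - 1 < 0$.

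To establish the key inequality, I would invoke Theorem \ref{add:thm:char}. Since $\alpha_0$ vanishes on $[0, q_0)$, the function $\bar g$ takes the simpler form $\bar g(s) = \xi'(s) + h^2 - s/L_0^2$ on $[0, q_0]$, and $g$ is consequently $C^2$ at $q_0$ with $g''(q_0) = 1/L_0^2 - \xi''(q_0)$. If $q_0 = 1$ (replica symmetric case, $\nu_0 = 0$), then Proposition \ref{add:prop1} directly gives $L_0^{-2} = \xi'(1) + h^2 \geq \xi''(1)$. If $q_0 \in (0, 1)$, then $q_0 \in \mathrm{supp}\,\nu_0 \subseteq S$, so $g(q_0) = 0$ is an interior minimum of the nonnegative function $g$; the second-order necessary condition yields $g''(q_0) \geq 0$, which is exactly $L_0^2 \xi''(q_0) \leq 1$.

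Next, I would compute the sign of $f_t$ at the endpoints. We always have $f_t(0) = L_0^2 h^2$. If $h = 0$, then $f_t(0) = 0$ and the strict decrease forces $u_t = 0$ to be the unique zero in $[-q_0, q_0]$. Suppose $h \neq 0$. First, $q_0 > 0$, for if $q_0 = 0$ then $g(0) = \int_0^1 \bar g(s)\,ds = 0$ combined with $g \geq 0$ would force $\bar g \equiv 0$ a.e., contradicting $\bar g(0) = h^2 > 0$. Next, at $u = q_0$: if $q_0 \in (0,1)$ then $\bar g(q_0) = 0$ (the interior-minimum first-order condition), hence $f_1(q_0) = L_0^2 \bar g(q_0) = 0$, so $f_t(q_0) = L_0^2 (t-1)\xi'(q_0) < 0$ since $\xi'(q_0) > 0$; if $q_0 = 1$, then using $L_0^2(\xi'(1) + h^2) = 1$ and $L_0^2 h^2 < 1$ we get $f_t(1) = (1-t)(L_0^2 h^2 - 1) < 0$. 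Combining $f_t(0) > 0 > f_t(q_0)$ with strict monotonicity, the intermediate value theorem produces a unique $u_t \in (0, q_0)$.

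The main obstacle will be the bound $L_0^2 \xi''(q_0) \leq 1$, which hinges on the second-order necessary condition for $g$ at its interior minimum $q_0$ together with a separate treatment of the replica symmetric boundary case $q_0 = 1$. Once this inequality is in hand, the remaining pieces are elementary calculus and a short case analysis.
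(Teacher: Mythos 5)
Your proof is correct and takes a genuinely different route from the paper's. The paper establishes uniqueness of the zero by exploiting that $\xi$ is even, hence $\xi'''$ is odd, which makes $f_t$ convex on $[0,q_0]$ and concave on $[-q_0,0]$; it then combines these shape properties with the endpoint signs (obtained from Lemma~\ref{lem4}) to locate the unique zero. You instead prove that $f_t$ is \emph{strictly decreasing} on all of $[-q_0,q_0]$, reducing this to the bound $L_0^2\xi''(q_0)\le 1$, which you extract from the second-order necessary condition at the interior minimum $q_0$ of $g$ (and from Proposition~\ref{add:prop1} when $q_0=1$). This strict monotonicity is a stronger structural statement than what the paper proves, and once it is in hand the endpoint computation (which uses Lemma~\ref{lem4} just as the paper does) forces the unique zero. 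The trade-off: the convexity route is more elementary, invoking only the parity of $\xi$, while your route leans on the variational characterization in Theorem~\ref{add:thm:char} but yields a cleaner picture of $f_t$.

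One small slip: to rule out $q_0=0$ when $h\neq 0$, you assert that $g(0)=0$ together with $g\ge 0$ would force $\bar g\equiv 0$ a.e. That implication is false (e.g.\ $g(u)=u(1-u)$ has $g(0)=g(1)=0$ and $g\ge 0$ with $\bar g=-g'\not\equiv 0$). The correct argument is immediate from Lemma~\ref{lem4}: since $L_0^2\bigl(\xi'(q_0)+h^2\bigr)=q_0$ and $L_0>0$, $q_0=0$ forces $h=0$. Alternatively, the first-order condition at the left endpoint gives $g'(0^+)\ge 0$, while $g'(0)=-\bar g(0)=-h^2<0$, a contradiction. Either one-line fix leaves the rest of your argument intact.
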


	   In what follows, we prove Propositions \ref{lem1} and \ref{lem6}. First, we establish a crucial property of $q_0.$
			
		\begin{lemma}\label{lem4}
			$q_0$ satisfies
			\begin{align}\label{lem4:eq1}
			L_0^2\bigl(\xi'({q}_0)+h^2\bigr)=q_0.
			\end{align} 
		\end{lemma}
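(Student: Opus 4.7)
The strategy is to invoke the first-order optimality characterization from Theorem~\ref{add:thm:char}. By that result, the minimizer $(L_0,\alpha_0)$ satisfies $g\geq 0$ on $[0,1]$ and $\nu_0(S)=\nu_0([0,1))$, where $\nu_0$ is the measure induced by $\alpha_0$ and $S=\{u\in[0,1):g(u)=0\}$. Since $\bar g$ is continuous, so is $g$, hence $S$ is closed; combined with $\nu_0(S)=\nu_0([0,1))$ this forces $\mathrm{supp}\,\nu_0\subseteq S$. If $\mathrm{supp}\,\alpha_0=\emptyset$ then $q_0=1$, the model is replica symmetric at zero temperature, and Proposition~\ref{add:prop1} gives $L_0=(\xi'(1)+h^2)^{-1/2}$, so $L_0^2(\xi'(q_0)+h^2)=1=q_0$ at once.

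In the remaining case $q_0<1$, the inclusion $\mathrm{supp}\,\nu_0\subseteq S$ together with $q_0=\min\mathrm{supp}\,\nu_0$ yields $q_0\in S$, so $g(q_0)=0$. Moreover $\nu_0([0,q_0))=0$ implies $\int_0^q\alpha_0(r)\,dr=0$ for every $q\in[0,q_0]$, so on this interval the defining formula \eqref{add:thm:char:eq2} simplifies to
\begin{align*}
\bar g(s)\;=\;\xi'(s)+h^2-\frac{s}{L_0^2},\qquad s\in[0,q_0].
\end{align*}
Since $g\geq 0$ everywhere on $[0,1]$ and $g(q_0)=0$, the point $q_0$ is a minimum of $g$, and because $g'(u)=-\bar g(u)$ is continuous, an interior minimum $q_0\in(0,1)$ forces $\bar g(q_0)=0$, which is precisely $L_0^2(\xi'(q_0)+h^2)=q_0$. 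The boundary subcase $q_0=0$ is dispatched separately: the one-sided condition $g'(0^+)\geq 0$ gives $\bar g(0)\leq 0$, while $\bar g(0)=h^2\geq 0$ using $\xi'(0)=0$; hence $h=0$ and the identity $L_0^2(\xi'(0)+h^2)=0=q_0$ holds trivially.

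The principal subtlety is showing that $q_0\in S$: by construction $q_0$ is merely the left endpoint of $\mathrm{supp}\,\nu_0$, and the conclusion $g(q_0)=0$ relies on the measure-theoretic identity $\nu_0(S)=\nu_0([0,1))$ from Theorem~\ref{add:thm:char} together with the closedness of $S$. Once this inclusion is established, the rest is a one-line minimum-principle argument applied to the smooth function $g$ on $[0,q_0]$, where $\alpha_0$ vanishes and $\bar g$ has the elementary form above.
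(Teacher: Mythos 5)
Your proof is correct, and it takes a genuinely different route from the paper's. The paper derives Lemma \ref{lem4} directly from the positive-temperature optimality condition \eqref{add:thm1:proof:eq2} of Lemma \ref{lem-2}: it picks points $q_{\beta_n}'\in\mathrm{supp}\,\mu_{\beta_n}$ converging to $q_0$ and passes to the $\beta_n\to\infty$ limit via bounded convergence, then substitutes $B-D(0)=1/L_0$. Your argument stays entirely at zero temperature, invoking the Crisanti--Sommers characterization of Theorem \ref{add:thm:char}: from $\nu_0(S)=\nu_0([0,1))$ and the continuity (hence closedness of the zero set) of $g$ you obtain $\mathrm{supp}\,\nu_0\subseteq S$, so $g(q_0)=0$, and since $g\geq 0$ on $[0,1]$ the point $q_0$ is a minimum, giving $g'(q_0)=-\bar g(q_0)=0$ when $q_0\in(0,1)$; the edge cases $q_0=0$ (where $g'(0^+)\geq 0$ forces $h=0$) and $q_0=1$ (where $\alpha_0\equiv 0$ and \eqref{add:thm:char:eq1} gives $L_0^2(\xi'(1)+h^2)=1$ directly, without even needing Proposition \ref{add:prop1}) are handled cleanly. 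Both approaches are sound; the paper's version keeps Lemma \ref{lem4} within the $\beta_n$-limit machinery (the quantities $B$, $D$) that Section \ref{sec:dis} relies on throughout, while yours is shorter and self-contained once Theorem \ref{add:thm:char} is available, at the price of being logically downstream of that theorem and (as you chose to phrase it) of Proposition \ref{add:prop1}.
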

		
			From equation \eqref{lem4:eq1}, one immediately sees that $q_0>0$ if $h\neq 0$. If $h=0,$ then $q_0$ could be either positive or equal to zero. To see this, one may consider Proposition \ref{add:prop1} with $h=0$ to obtain $q_0=1,$ while Proposition \ref{add:prop1.5} with $h=0$ illustrates that $q_0=0.$
			
		\begin{proof}[\rm \bf Proof of Lemma \ref{lem4}]
			Recall that ${\beta_n} x_{{\beta_n}}$ converges vaguely to $\alpha_0$ on the interval $[0,1)$ and that from \eqref{add:thm1:proof:eq2},
			\begin{align}\label{lem4:proof:eq1}
			\frac{h_{{\beta_n}}^2}{(d_{{\beta_n}}-d_{\beta_n}^{x_{{\beta_n}}}(0))^2}+\int_0^q\frac{\xi_{{\beta_n}}''(s)}{(b_{{\beta_n}}-d_{\beta_n}^{x_{{\beta_n}}}(s))^2}ds=q
			\end{align}
			for all $q\in\mbox{supp}\mu_{\beta_n}.$
			If $\mbox{supp}\alpha_0\neq \emptyset$, we choose $q_{\beta_n}'\in \mbox{supp}\mu_{{\beta_n}}$ such that $q_{\beta_n}'\rightarrow  {q}_0$. Then \eqref{lem4:proof:eq1}, \eqref{eq.0} and the bounded convergence theorem together leads to
			\begin{align}\label{eq.1}
			\frac{h^2+\xi'(q_0)}{(B-D(0))^2}=\frac{h^2}{(B-D(0))^2}+\int_0^{{q}_0}\frac{\xi''(s)}{(B-D(s))^2}ds={q}_0,
			\end{align}
			where the first equality used $D(s)=D(0)$ for $s\in[0,{q}_0)$. Recall that $q_{\beta_n}$ denotes the largest number in the support of $\mu_{\beta_n}.$ If $\mbox{supp}\alpha_0=\emptyset$, we use \eqref{lem4:proof:eq1} with $q=q_{\beta_n}$ and the same argument as above to get 
			\begin{align}\label{eq.2}
			\frac{h^2+\xi'(1)}{(B-D(0))^2}=\frac{h^2}{(B-D(0))^2}+\int_0^{1}\frac{\xi''(s)}{(B-D(0))^2}ds=1.
			\end{align} 
			Finally, since 
			\begin{align*}
			B-D(0)=\lim_{n\rightarrow\infty}\frac{1}{{\beta_n}}\bigl(b_{\beta_n}-d_{\beta_n}^{x_{\beta_n}}(0)\bigr)&=\lim_{n\rightarrow\infty}\frac{1}{\int_0^1{\beta_n} x_{\beta_n}(s)ds}=\frac{1}{L_0}
			\end{align*}
			by applying \eqref{add:thm1:proof:eq-1} and the fact $x_{\beta_n}(s)=0$ for $0\leq s\leq \inf\mbox{supp}\mu_{\beta_n}$, this completes our proof by substituting $B-D(0)=1/L_0$ in \eqref{eq.1} and \eqref{eq.2}.
		\end{proof}

		\begin{proof}[\rm \bf Proof of Proposition \ref{lem1}] We first prepare two simple facts. First, note that from  \eqref{prop3:proof:eq0},
			\begin{align*}
			F({\beta_n})
			&=\frac{1}{2}\Bigl(\frac{h_{\beta_n}^2}{b_{\beta_n}-d_{\beta_n}^{x_{\beta_n}}(0)}+\int_0^1\frac{\xi_{\beta_n}''(q)}{b_{\beta_n}-d_{\beta_n}^{x_{\beta_n}}(q)}dq+b_{\beta_n}-1-\log b_{\beta_n}-\int_0^1q\xi_{\beta_n}''(q)x_{\beta_n}(q)dq\Bigr).
			\end{align*}
			From \eqref{eq.0} and the limits obtained in the proof of Lemma \ref{prop3}, the bounded convergence theorem yields
				\begin{align}
				\begin{split}\label{prop3:eq3}
				GS&=\frac{1}{2}\Bigl(\frac{h^2}{B-D(0)}+\int_0^1\frac{\xi''(q)}{B-D(q)}dq+\xi''(1)\delta_0+\delta_0^{-1}-V_1\Bigr),
				\end{split}
				\end{align}
				where $V_1$ is defined in \eqref{more:eq9}.
			Next, from Lemma \ref{prop3} and Proposition \ref{lem2}, $D_u(q)\leq D(0)<B$ for all $q\in[0,|u|].$ A straightforward differentiation leads to
			\begin{align}\label{lem1:proof:eq1}
			\partial_\lambda T(t,u,0)&=\frac{1+t}{2}\int_0^{|u|}\frac{\iota\xi''(q)}{(B-D (q))^2}dq-\frac{1-t}{2}\int_0^{|u|}\frac{\iota\xi''(q)}{(B-D _u(q))^2}dq-u.
			\end{align}
			
			Now assume that $q_0<1.$
			For $|u|\leq q_0,$ since $\alpha_0=0$ on $[0,q_0),$ we have $D_u(q)=V_0=D (q)$ for all $q\in [0,|u|].$  Consequently, from \eqref{prop3:eq3}, $E(t,u,0)=2GS$  and from \eqref{lem1:proof:eq1},
			\begin{align*}
			\partial_\lambda E(t,u,0)&=\frac{1+t}{2}\int_0^{|u|}\frac{\iota\xi''(q)dq}{(B-D (q))^2}-\frac{1-t}{2}\int_0^{|u|}\frac{\iota\xi''(q)dq}{(B-D (q)^2)}+\frac{h^2}{(B-D (0))^2}-u\\
			&=\frac{\iota t\int_0^{|u|}\xi''(q)dq+h^2}{(B-D (0))^2}-u\\
			&=L_0^2\bigl(\iota t\xi'(|u|)h^2\bigr)-u\\
			&=f_t(u).
			\end{align*}
			Here the last equality used the fact that $\xi'$ is an odd function. If $q_0=1,$ these remain true for all $|u|<1$ by the same argument.
		\end{proof}
		
		\begin{remark}
			\label{rm4}\rm Assume that $|u|=1$, $q_0=1,$ and $t\in(0,1).$ One can check from \eqref{more:eq3} that $$
			D_u(q)=V_0(1-t)/(1+t)\neq V_0=D(q)
			$$ for all $0\leq q<1.$ Thus, by the definition of $E(t,u,0)$ and \eqref{lem1:proof:eq1}, we have $E(t,u,0)\neq 2GS$ and $\partial_\lambda E(t,u,0)\neq f_t(u).$
		\end{remark}
		
			\begin{proof}[\bf Proof of Proposition \ref{lem6}]
				Note that $\xi$ is even. Since $\xi'''$ is an odd function, $f_t$ is convex on $[0,q_0]$ and concave on $[-q_0,0].$ Assume that $h\neq 0.$ Since $f_t(0)>0$ and $f_t(q_0)<0$ by Lemma \ref{lem4}. From the intermediate value theorem and the convexity of $f_t$ on $[0,q_0]$, there exists a unique $u_t$ on $(0,q_0)$ such that $f_t(u_t)=0.$ In addition, since
				\begin{align*}
				f_t(-q_0)=-L_0^2\bigl(t\xi'(q_0)-h^2\bigr)+q_0>-L_0^2\bigl(t\xi'(q_0)+h^2\bigr)+q_0=-f_t(q_0)>0,
				\end{align*}
				the concavity of $f_t$ on $[-q_0,0]$ and $f_t(0)>0$ imply that $f_t=0$ has no solution on $[-q_0,0].$ So $f_t=0$ has only one solution $u_t$ when $h\neq 0$ and $u_t$ is located in the interval $(0,q_0).$ Suppose that $h=0.$ If $q_0=0$, then clearly $u_t=0.$ If $q_0\neq 0,$ then Lemma \ref{lem4} deduces
				$$
				f_t(-q_0)=-L_0^2t\xi'(q_0)+q_0> f_t(0)=0>L_0^2t\xi'(q_0)-q_0=f_t(q_0).
				$$  
				Using the convexity of $f_t$ on $[0,q_0]$ and the concavity of $f_t$ on $[-q_0,0]$, we conclude that $0$ is the unique solution to $f_t=0$.
			\end{proof}
		
		\subsubsection{Case II: $|u|\geq q_0$}

		We show that $E(t,u,0)$ is uniformly strictly less than $2GS$ as long as $|u|>q_0$ if $q_0\in(0,1)$ or $|u|=1$ if $q_0=1.$
		
		\begin{proposition}
			\label{lem-1}  Let $t\in(0,1).$
			\begin{itemize}
				\item[$(i)$] If $q_0\in [0,1)$, then $
				\sup_{|u|\in [s,1]}E(t,u,0)<2GS
				$ for any $s\in (q_0,1)$. 
				\item[$(ii)$] If $q_0=1$, then 
			$
				E(t,\pm 1,0)<2GS.
			$
			\end{itemize}
		\end{proposition}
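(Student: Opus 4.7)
The plan is to write $E(t,u,0) - 2GS$ as an explicit integral and show it is strictly negative, uniformly where required. Using the formula for $E(t,u,\lambda)$ from Proposition~\ref{lem2} at $\lambda = 0$ together with the identity
$$2GS = \frac{h^2}{B - D(0)} + \int_0^1 \frac{\xi''(q)}{B - D(q)}\,dq + \xi''(1)\delta_0 + \delta_0^{-1} - V_1$$
obtained by doubling \eqref{prop3:eq3}, a direct cancellation yields, for $u \in [0,1]$,
$$E(t,u,0) - 2GS = \frac{1-t}{2} \int_0^u \xi''(q) \left( \frac{1}{B - D_u(q)} - \frac{1}{B - D(q)} \right)dq.$$
For $u \in [-1, 0)$, an analogous identity holds, augmented by the extra term $h^2\bigl((B - D_u(0))^{-1} - (B - D(0))^{-1}\bigr)$, which is non-positive since $D_u(0) \leq D(0)$ by \eqref{more:eq3}. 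The next key step is the algebraic identity $D(q) - D_u(q) = \frac{2t}{1+t}\int_q^{|u|} \xi''(s)\alpha_0(s)\,ds$ when $|u| < 1$ and $D(q) - D_u(q) = \frac{2t}{1+t} D(q)$ when $|u| = 1$; in both cases the difference is non-negative, so $E(t,u,0) \leq 2GS$.

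For case $(i)$ with $s \in (q_0, 1)$, I split the supremum into $|u| \in [s,1)$ and $|u| = 1$. Since $q_0 = \min\mathrm{supp}(\alpha_0)$ with $q_0 < s$, the function $\alpha_0$ is strictly positive on $(q_0, 1)$, hence $I(q) := \int_q^s \xi''(r)\alpha_0(r)\,dr > 0$ for every $q \in [0, s)$. Using the monotonicity $\int_q^{|u|}\xi''(r)\alpha_0(r)\,dr \geq \int_q^s \xi''(r)\alpha_0(r)\,dr$ when $|u| \geq s$, together with the uniform bound $(B - D(q))(B - D_u(q)) \leq B^2$ (from $0 \leq D_u, D \leq D(0) < B$), I obtain the uniform estimate
$$\sup_{|u| \in [s,1)} \bigl(E(t,u,0) - 2GS\bigr) \leq -\frac{t(1-t)}{(1+t)B^2} \int_0^s \xi''(q) I(q)\,dq < 0.$$
For $|u| = 1$, the representation $D(q) = \int_q^1 \xi''(s)\alpha_0(s)\,ds + \xi''(1)\delta_0 \geq \xi''(1)\delta_0 > 0$ on $[0, 1)$ (which follows by tracking the limit in Lemma~\ref{prop3}) makes the corresponding integral $\int_0^1 \xi''(q) D(q)/((B-D_u)(B-D))\,dq$ strictly positive, yielding $E(t, \pm 1, 0) < 2GS$; combining the two estimates settles $(i)$.

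For case $(ii)$, $q_0 = 1$ forces $\alpha_0 \equiv 0$, so $D(q) \equiv V_0 = \xi''(1)\delta_0 > 0$ on $[0,1)$, and the $|u| = 1$ formula from \eqref{more:eq3} gives $D_{\pm 1}(q) = \frac{1-t}{1+t} V_0$, producing $D(q) - D_{\pm 1}(q) = \frac{2t}{1+t} V_0 > 0$ on $[0,1)$ and hence $E(t, \pm 1, 0) < 2GS$.

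The main obstacle lies in the discontinuity of $D_u$ as a function of $u$ at $|u| = 1$, which forces the boundary case to be treated through the middle line of \eqref{more:eq3}; for that case it is the strict positivity of $D$ itself, rather than of $\alpha_0$, that drives the strict inequality. Uniformity in case $(i)$ is then obtained by simultaneously bounding the numerator $\int_q^{|u|}\xi''\alpha_0$ from below by $\int_q^s \xi''\alpha_0$ and the denominator from above by $B^2$, both independently of $|u|$.
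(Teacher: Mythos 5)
Your overall strategy matches the paper's: you derive the identity $E(t,u,0)-2GS = -\mathcal{E}(t,u)$ with $\mathcal{E}$ given by an explicit non-negative integral involving $D-D_u$, then show $\mathcal{E}$ is strictly positive, uniformly over the relevant range of $u$. Your uniform estimate over $|u|\in[s,1)$ (bounding the denominator by $B^2$ and the numerator by $\int_q^s\xi''\alpha_0$) and the paper's (lower-bounding $D-D_u$ on the subinterval $[q_0,(s+q_0)/2]$) are interchangeable, and your treatment of the $|u|=1$ endpoint and of case (ii) agrees with the paper's in substance.

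There is, however, one unjustified intermediate claim. You assert the exact decomposition $D(q)=\int_q^1\xi''(s)\alpha_0(s)\,ds+\xi''(1)\delta_0$ (equivalently $V_0=\int_0^1\xi''\alpha_0+\xi''(1)\delta_0$, and in case (ii), $V_0=\xi''(1)\delta_0$), attributing it to "tracking the limit in Lemma~\ref{prop3}." That lemma only identifies $D(q)$ as $V_0-\int_0^q\xi''\alpha_0$; it does not show $V_0$ decomposes cleanly. The vague convergence $\beta_n x_{\beta_n}\to\alpha_0$ holds only on $[0,1)$, and together with $\beta_n(1-q_{\beta_n})\to\delta_0$ it does not preclude mass accumulating near $1$ that is captured by neither $\alpha_0$ nor $\delta_0$ (the paper proves only $L_0\ge\int_0^1\alpha_0+\delta_0$, not equality). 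The paper deliberately avoids this by using only $V_0\ge\int_0^1\xi''\alpha_0$ and $V_0>0$. Fortunately, your conclusion survives: for the endpoint $|u|=1$ in case (i), since $q_0<1$ and $\alpha_0>0$ on $(q_0,1)$, you have $D(q)=V_0-\int_0^q\xi''\alpha_0\ge\int_q^1\xi''\alpha_0>0$ for $q\in[0,1)$, which together with $D(q)-D_{\pm 1}(q)=\frac{2t}{1+t}D(q)$ gives the needed strict inequality; and in case (ii), $\alpha_0\equiv0$ gives $D\equiv V_0>0$ directly. So you should replace the claimed decomposition with this positivity argument; the remainder of the proof is sound.
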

		
		The validity of Proposition \ref{lem-1} relies on the following lemma. 
		
		\begin{lemma}
			Given the assumptions and the notations of Proposition \ref{lem2}, for any $u\in[-1,1]$ and $t\in[0,1],$ we have that 
			\begin{align}
			\label{lem3:eq1}
			E(t,u,0)=2GS-\mathcal{E}(t,u),
			\end{align}
			where the error term $\mathcal{E}(t,u)$ is defined as
			\begin{align}
			\begin{split}
			\label{lem3:eq2}
			\mathcal{E}(t,u)&=\frac{1-t}{2}  \int_0^{|u|} \frac{\xi''(q)(D (q)-D _u(q))}{(B-D (q))(B-D _u(q))}dq+\left\{
			\begin{array}{ll}
			0,&\mbox{if $u\in[0,1]$},\\
			\frac{h^2(D (0)-D _u(0))}{(B-D (0))(B-D _u(0))},&\mbox{if $u\in[-1,0]$.}
			\end{array}
			\right.
			\end{split}
			\end{align}
		\end{lemma}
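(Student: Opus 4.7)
The plan is a direct algebraic verification in two cases, $u\in[0,1]$ and $u\in[-1,0]$. First I would substitute $\lambda=0$ into the formula \eqref{lem2:eq2} for $E(t,u,\lambda)$. When $\lambda=0$ the two integrals on $[|u|,1]$ with denominators $B\mp\lambda-D(q)$ collapse into a single term $\int_{|u|}^1\xi''(q)/(B-D(q))\,dq$, the $-\lambda u$ contribution drops, and on $[0,|u|]$ we are left with the $(1+t)/2$-weighted integrand $\xi''(q)/(B-D(q))$ and the $(1-t)/2$-weighted integrand $\xi''(q)/(B-D_u(q))$.

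Next I would recall \eqref{prop3:eq3}, which states
\begin{align*}
2GS = \frac{h^2}{B-D(0)} + \int_0^1 \frac{\xi''(q)}{B-D(q)}\,dq + \xi''(1)\delta_0 + \delta_0^{-1} - V_1.
\end{align*}
For $u\in[0,1]$, subtracting this from $E(t,u,0)$ causes the $h^2/(B-D(0))$ term, the constants $\xi''(1)\delta_0+\delta_0^{-1}-V_1$, and the integral over $[|u|,1]$ to cancel. What remains on $[0,|u|]$ is
\begin{align*}
\frac{1+t}{2}\int_0^{|u|}\frac{\xi''(q)}{B-D(q)}\,dq + \frac{1-t}{2}\int_0^{|u|}\frac{\xi''(q)}{B-D_u(q)}\,dq - \int_0^{|u|}\frac{\xi''(q)}{B-D(q)}\,dq,
\end{align*}
which after combining the first and third terms becomes $\frac{1-t}{2}\int_0^{|u|}\xi''(q)\bigl(\tfrac{1}{B-D_u(q)}-\tfrac{1}{B-D(q)}\bigr)\,dq$. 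Putting the difference over a common denominator yields exactly $-\mathcal{E}(t,u)$ as given in \eqref{lem3:eq2}.

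For $u\in[-1,0]$ the only change is that $E(t,u,0)$ carries the term $h^2/(B-D_u(0))$ rather than $h^2/(B-D(0))$. The identity
\begin{align*}
\frac{h^2}{B-D_u(0)} - \frac{h^2}{B-D(0)} = -\frac{h^2(D(0)-D_u(0))}{(B-D(0))(B-D_u(0))}
\end{align*}
produces precisely the extra boundary correction in the definition of $\mathcal{E}(t,u)$ on $[-1,0]$, while the integral over $[0,|u|]$ is handled exactly as above.

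No step here is genuinely difficult: the claim is essentially a bookkeeping assertion about how the pieces of $E(t,u,0)$ line up with the representation \eqref{prop3:eq3} of $2GS$. The one point that warrants attention is that all denominators are strictly positive so that the manipulations are licit; this is supplied by Lemma~\ref{prop3} (giving $0\leq D(q)\leq D(0)<B$) together with the bound $D_u(q)\leq D(0)<B$ recorded in Proposition~\ref{lem2}.
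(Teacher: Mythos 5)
Your proposal is correct and proceeds by the same direct algebraic verification the paper uses: split $\tfrac{1+t}{2}=1-\tfrac{1-t}{2}$ to reconstitute $\int_0^1\xi''/(B-D)$, compare with \eqref{prop3:eq3}, and handle the $u<0$ field term separately. In fact your sign bookkeeping is more careful than the paper's displayed computation, which carries a slipped sign (the $+\tfrac{1-t}{2}\int_0^{|u|}\cdots$ in its second and third lines should be $-$, as in your derivation), though the stated lemma itself is correct.
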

		
		 \begin{proof}
		 	The equation \eqref{lem3:eq1} can be obtained by letting $\lambda=0$ in the right-hand side of \eqref{lem2:eq1} and noting that
		 	\begin{align*}
		 	&T(t,u,0)\\
		 	&=\frac{1+t}{2}  \int_0^{|u|} \frac{\xi''(q)}{B-D (q)} dq+ \frac{1-t}{2}  \int_0^{|u|} \frac{\xi''(q)}{B-D _u(q)} dq+\int_{|u|}^1 \frac{\xi''(q)}{B-D (q)} dq  +\xi''(1)\delta_0+\delta_0^{-1}-V_1\\
		 	&=\int_0^{1} \frac{\xi''(q)}{B-D (q)} dq+\xi''(1)\delta_0+\delta_0^{-1}-V_1+ \frac{1-t}{2}  \int_0^{|u|} \frac{\xi''(q)(D (q)-D _u(q))}{(B-D _u(q))(B-D (q))}dq\\
		 	&=2GS-\frac{h^2}{B-D(0)}+ \frac{1-t}{2}  \int_0^{|u|} \frac{\xi''(q)(D (q)-D _u(q))}{(B-D _u(q))(B-D (q))}dq,
		 	\end{align*}
		 	where the last equality used \eqref{prop3:eq3}. 
		 \end{proof}

		\begin{proof}[\rm \bf Proof of Proposition \ref{lem-1}]
			First observe that $V_0\geq \int_0^1\xi''(s)\alpha_0(s)ds$ and $V_0>0.$ In fact, the first inequality can be  seen  directly from the definitions of $V_0$ and $\alpha_0$. As for the second, if $V_0=0$, it will force $L_0=0$, which contradicts the fact that $(L_0,\alpha_0)\in\mathcal{K}$ in Lemma \ref{add:lem0}. Now, suppose $q_0\in[0,1)$ and $s\in (q_0,1).$ If $|u|\in [s,1),$ then
			$$
			D(q)-D_u(q)=\frac{2t}{1+t}\int_{q}^{|u|}\xi''(s)\alpha_0(s)ds\geq \frac{2t}{1+t}\int_{(s
				+q_0)/2}^{s}\xi''(s)\alpha_0(s)ds
			$$  
			for all $q\in [q_0,(s+q_0)/2]$; if $|u|=1,$ then
			
			$$
			D(q)-D_u(q)=\frac{2t}{1+t}\Bigl(V_0-\int_{q_0}^q\xi''(s)\alpha_0(s)ds\Bigr)\geq \frac{2t}{1+t}\Bigl(V_0-\int_{q_0}^{(s+q_0)/2}\xi''(s)\alpha_0(s)ds\Bigr)
			$$
			for all $q\in [q_0,(s+q_0)/2].$ From the observation at the beginning of the proof and the fact that $\alpha_0>0$ on $(q_0,1)$, the lower bounds on the right-hand sides of these two inequalities are positive. They together give a uniform lower bound for $D(q)-D_u(q)$ on $[q_0,(s+q_0)/2]$ for all $|u|\in [s,1].$ Consequently, from the definition of $\mathcal{E}(t,u)$ in \eqref{lem3:eq2}, $\inf_{s\leq |u|\leq 1}\mathcal{E}(t,u)>0$ and $(i)$ follows by \eqref{lem3:eq1}.
			Similarly, if $q_0=1$, then $\alpha_0=0$ on $[0,1)$ and this implies
			$$
			D(q)-D_u(q)=\frac{2t}{1+t}V_0>0
			$$
			for $q\in[0,1)$ and $u=\pm 1$. Thus, $\mathcal{E}(t,\pm 1)>0$. This and \eqref{lem3:eq1} imply $(ii)$. 
		\end{proof}

\subsection{Proof of Theorem \ref{thm4}}

Recall the quantity $u_t$ from Proposition \ref{lem6} and the maximal coupled Hamiltonian $\max_{\mathcal{A}_\varepsilon(u_t)}H_{N,t}$ from Theorem \ref{lem5}. 
The proof of Theorem \ref{thm4} is based on the following theorem, which states that as long as $0<t<1$, the maximum energy of the coupled system is essentially attained by the spin configurations, whose overlap is concentrated around the constant $u_t.$

      \begin{theorem}\label{thm1}
      	Let $0<t<1$. For any $\varepsilon\in(0,1),$ 
      	\begin{align*}
      	\limsup_{N\rightarrow\infty}\e \max_{\mathcal{A}_{\varepsilon}(u_t)}\frac{H_{N,t}(\sigma,\tau)}{N}&<2GS.
      	\end{align*}
      \end{theorem}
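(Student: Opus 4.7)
The plan is to combine the RSB bound from Theorem \ref{lem5} with the pointwise analysis of $E(t,v,\lambda)$ carried out in Propositions \ref{lem1}, \ref{lem6}, and \ref{lem-1}. The first step is to apply Theorem \ref{lem5}, which reduces the claim to constructing a bounded measurable $\Lambda:[-1,1]\to\mathbb{R}$ with $\|\Lambda\|_\infty<(B-D(0))/2$ so that
\begin{equation*}
\sup_{v\in A_{\varepsilon/2}(u_t)}E(t,v,\Lambda(v))<2GS.
\end{equation*}

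For each fixed $v_0\in A_{\varepsilon/2}(u_t)$, I will exhibit a choice $\lambda_{v_0}$ witnessing the strict pointwise inequality $E(t,v_0,\lambda_{v_0})<2GS$. If $|v_0|\leq q_0$, then Proposition \ref{lem1} gives $E(t,v_0,0)=2GS$ and $\partial_\lambda E(t,v_0,0)=f_t(v_0)$, while Proposition \ref{lem6} guarantees that $u_t$ is the unique zero of $f_t$ on $[-q_0,q_0]$, so the constraint $|v_0-u_t|\geq \varepsilon/2$ forces $f_t(v_0)\neq 0$. Picking $\lambda_{v_0}$ small and of opposite sign to $f_t(v_0)$ then yields $E(t,v_0,\lambda_{v_0})<2GS$ by a first-order expansion in $\lambda$ based on the explicit formula in Proposition \ref{lem2}. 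If instead $|v_0|>q_0$, or $v_0=\pm 1$ when $q_0=1$, Proposition \ref{lem-1} yields $E(t,v_0,0)<2GS$ directly, so I may set $\lambda_{v_0}=0$.

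To upgrade these pointwise strict inequalities into a uniform bound, I will use joint continuity of $E(t,\cdot,\cdot)$ on $\{|v|<1\}\times\{|\lambda|<(B-D(0))/2\}$, which follows from the explicit integrals in Proposition \ref{lem2} together with the dominated convergence theorem, since Lemma \ref{prop3} guarantees uniform lower bounds on the denominators $B\pm\lambda-D(q)$ and $B\pm\iota\lambda-D_v(q)$. Around each $v_0$, continuity produces a neighborhood $U_{v_0}$ and a slack $\eta_{v_0}>0$ with $E(t,v,\lambda_{v_0})<2GS-\eta_{v_0}$ on $U_{v_0}$. Extracting a finite subcover $U_{v_1},\ldots,U_{v_k}$ of the compact set $A_{\varepsilon/2}(u_t)$ and defining $\Lambda(v):=\lambda_{v_i}$ on $U_{v_i}\setminus\bigcup_{j<i}U_{v_j}$ produces the required bounded Borel function.

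The main technical obstacle I anticipate is the boundary behavior at $|v|=1$, where the formula defining $D_v$ in Proposition \ref{lem2} changes and continuity of $E$ across $|v|=1$ is not automatic. This can be sidestepped by invoking Proposition \ref{lem-1}(i): when $q_0<1$, it already supplies uniform strictness $E(t,v,0)<2GS$ on the compact interval $\{|v|\in[s,1]\}$ for any $s>q_0$, so one may cover the portion of $A_{\varepsilon/2}(u_t)$ lying in $\{|v|\geq s\}$ with a single neighborhood on which $\Lambda\equiv 0$ works without any continuity at $\pm 1$. When $q_0=1$, the relevant boundary points in $A_{\varepsilon/2}(u_t)\cap\{|v|=1\}$ are only $\pm 1$, and Proposition \ref{lem-1}(ii) handles them directly as isolated points.
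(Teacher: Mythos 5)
Your overall structure — invoke Theorem \ref{lem5}, reduce to building a bounded measurable $\Lambda$ with $\sup_{v\in A_{\varepsilon/2}(u_t)}E(t,v,\Lambda(v))<2GS$, and split the analysis according to whether $|v|$ is below or above $q_0$, using Propositions \ref{lem1}, \ref{lem6}, \ref{lem-1} — is exactly the paper's approach, and most of the steps go through. The fix you give for the boundary $|v|=1$ when $q_0<1$ (cover $\{|v|\ge s\}$ with $\Lambda\equiv 0$ via Proposition \ref{lem-1}(i), so no continuity at $\pm 1$ is needed) is also fine.

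The genuine gap is the $q_0=1$ case at $v=\pm 1$. You describe $\pm 1$ as ``isolated points'' handled directly by Proposition \ref{lem-1}(ii), but topologically they are not isolated in $A_{\varepsilon/2}(u_t)=[-1,u_t-\varepsilon/2]\cup[u_t+\varepsilon/2,1]$: each is a limit point of the set. A finite subcover by relatively open sets therefore still requires an open neighborhood of $\pm 1$ on which $E(t,\cdot,\Lambda)$ is bounded strictly below $2GS$, and Proposition \ref{lem-1}(ii) only gives a single-point inequality. Moreover this cannot be patched by continuity: when $q_0=1$ the map $u\mapsto E(t,u,0)$ is genuinely discontinuous at $u=\pm 1$ (this is exactly Remark \ref{rm4} in the paper, where $D_u$ jumps from $D$ on $|u|<1$ to $\tfrac{1-t}{1+t}V_0\neq V_0$ at $|u|=1$, so that $E(t,u,0)=2GS$ for $|u|<1$ but $E(t,\pm 1,0)<2GS$). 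So the joint continuity you invoke on $\{|v|<1\}\times\{|\lambda|<K\}$ does not extend to $|v|=1$, and your compactness argument does not close.

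The paper avoids this by not relying on a subcover at all: it fixes the explicit choice $\lambda(u)=-\eta f_t(u)/C$, uses the uniform second-derivative bound to get the quadratic estimate $E(t,u,\lambda(u))\le 2GS-\tfrac{\eta}{C}(1-\tfrac{\eta}{2})f_t(u)^2$ for all $|u|<1$, and then observes that since $f_t$ is continuous on $[-1,1]$ with $\lim_{u\to\pm 1}f_t(u)\neq 0$, the quantity $\inf\{|f_t(u)|:u\in(-1,1),\,|u-u_t|\ge\varepsilon/2\}$ is strictly positive. This gives the uniform strict gap on the entire open set $(-1,1)\cap A_{\varepsilon/2}(u_t)$ in one stroke, and then $\pm 1$ are handled separately by $\Lambda(\pm1)=0$ and Proposition \ref{lem-1}(ii), with the final supremum being a max of two quantities each strictly below $2GS$. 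To repair your proof you would need to either adopt this explicit uniform $\lambda(u)$ and quadratic bound, or otherwise establish a one-sided uniform estimate $E(t,v,0)<2GS-\eta'$ on intervals $(1-\kappa,1]$ and $[-1,-1+\kappa)$ when $q_0=1$, which is precisely what the discontinuity of $E$ at $\pm 1$ prevents you from getting by a straightforward continuity argument.
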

      
      We now use this bound to prove Theorem \ref{thm4}.
      
      	\begin{proof}[\bf Proof of Theorem \ref{thm4}]
      		Let $t\in(0,1).$ The claimed properties of $u_t$ in Theorem \ref{thm4} follows directly from Proposition \ref{lem6}. To complete the proof, observe that
      		\begin{align*}
      		H_{N,t}(\sigma_t^*,\tau_t^*)=\max_{(\sigma,\tau)\in S_N\times S_N}H_{N,t}(\sigma,\tau)&=\max_{\sigma\in S_N}H_{N,t}^1(\sigma)+\max_{\tau\in S_N}H_{N,t}^2(\tau).
      		\end{align*}
      		This implies
      		$
      		\lim_{N\rightarrow\infty}N^{-1}\e H_{N,t}(\sigma_t^*,\tau_t^*)=2GS.
      		$
      		From Theorem \ref{thm1} and the Borell inequality \cite[Theorem 7.1]{L}, there exist two constants $\eta$ and $C>0$ independent of $N$ such that with probability at least $1-Ce^{-N/C}$, the following event holds,
      		\begin{align*}
      		\frac{H_{N,t}(\sigma_t^*,\tau_t^*)}{N}-\max_{\mathcal{A}_\varepsilon(u_t)}\frac{H_{N,t}(\sigma,\tau)}{N}\geq \eta>0.
      		\end{align*}
      		This means that $(\sigma_t^*,\tau_t^*)$ is not in $\mathcal{A}_\varepsilon(u_t)$ and thus $|R(\sigma_t^*,\tau_t^*)-u_t|\geq \varepsilon$ has probability at most $Ce^{-N/C}$ and thus $\lim_{N\rightarrow\infty}\p(|R(\sigma_t^*,\tau_t^*)-u_t|\geq\varepsilon)=0.$ 
      	\end{proof}

	\begin{proof}[\rm \bf Proof of Theorem \ref{thm1}]
		Let $0<t<1.$ Recall the function $f_t$ and the quantities $q_0,u_t$ from Proposition~\ref{lem6}. Note that $0\leq u_t<1$. We split our discussion into three cases: $q_0\in(0,1),$ $q_0=1$, and $q_0=0.$ For each case, our goal is to find a measurable function $\Lambda$ on $[-1,1]$ with $\|\Lambda\|_\infty<(B-D(0))/2$  such that for any $0<\varepsilon<1-u_t$,
		\begin{align}
		\label{thm1:proof:eq2}
		\sup_{u\in {A}_{\varepsilon/2}(u_t)}E(t,u,\Lambda(u))<2GS.
		\end{align}
		If this could be accomplished, then Theorem \ref{lem5} completes our proof.
		
		First, we consider the case $q_0\in(0,1).$	Denote $K=(B-D(0))/2.$
		From Proposition \ref{lem2}, it is clear to see that $E(t,u,\cdot)$ is twice differentiable for all $|\lambda|\leq K$ and the second derivative satisfies $|\partial_{\lambda}^2 E(t,u,\lambda)|\leq C$ for some constant $C>0$. Thus, Taylor's theorem and Proposition \ref{lem1} together yield
		\begin{align}\label{thm1:proof:eq1}
		E(t,u,\lambda)\leq 2GS+\lambda f_t(u)+\frac{C\lambda^2}{2}
		\end{align}
		for all $|\lambda|\leq K.$ Define $$\lambda(u)=-\frac{\eta f_t(u)}{C}$$ for $u\in[-q_0,q_0]$, where $\eta\in(0,1)$ is chosen to be small enough such that $\max_{u\in[-q_0,q_0]}|\lambda(u)|\leq K.$
		Consequently, plugging $\lambda(u)$ into \eqref{thm1:proof:eq1} leads to
		\begin{align}\label{thm1:proof:eq3}
		E(t,u,\lambda(u))&\leq 2GS-\frac{\eta}{C}\Bigl(1-\frac{\eta}{{2}}\Bigr)f_t(u)^2.
		\end{align}
		Let $\eta_0$ be the minimum of $|f_t(u)|$ for $u\in [-q_0,q_0]$ with $|u-u_t|\geq \varepsilon/2$. From Proposition~\ref{lem6}, $\eta_0>0$ and thus,
		\begin{align}\label{eq-2}
		\sup_{u\in [-q_0,q_0]:|u-u_t|\geq \varepsilon/2}E(t,u,\lambda(u))&<2GS.
		\end{align}
		Observe from \eqref{lem2:eq2} that $E(t,\cdot,\cdot)$ is continuous on $(-1,1)\times[-K,K].$ From \eqref{eq-2}, there exists some $\eta_2\in(0,1-q_0)$ such that
		\begin{align}
		\begin{split}\label{eq-7}
		\sup_{u\in[q_0,q_0+\eta_2]}E(t,u,\lambda(q_0))<2GS,\\
		\sup_{u\in[-q_0-\eta_2,-q_0]}E(t,u,\lambda(-q_0))< 2GS.
		\end{split}
		\end{align}
		On the other hand, from Proposition \ref{lem-1}$(i)$,
		\begin{align}
				\begin{split}\label{eq-8}
		\sup_{q_0+\eta_2\leq |u|\leq 1}E(t,u,0)<2GS.
		\end{split}
		\end{align}
		Thus, if we set
		\begin{align*}
		\Lambda(u)&=\left\{
		\begin{array}{ll}
		\lambda(u),&\mbox{if $|u|\leq q_0$},\\
		\lambda(q_0),&\mbox{if $q_0<u\leq q_0+\eta_2$},\\
		\lambda(-q_0),&\mbox{if $-q_0-\eta_2\leq u< -q_0$},\\
		0,&\mbox{if $q_0+\eta_2<|u|\leq 1$},
		\end{array}\right.
		\end{align*}
		then \eqref{thm1:proof:eq2} follows immediately from \eqref{eq-2}, \eqref{eq-7}, and \eqref{eq-8}.
		
		Next we assume $q_0=1.$ In this case, we see from Proposition \ref{lem1}, \eqref{thm1:proof:eq3} still holds for all $|u|<1.$ Since $\lim_{u\rightarrow\pm 1}f_t(u)\neq 0$, it follows that
		\begin{align*}
		\sup_{u\in (-1,1):|u-u_t|\geq \varepsilon/2}E(t,u,\lambda(u))&<2GS.
		\end{align*}
		In addition, from Proposition \ref{lem-1}$(ii)$, $$
		E(t,\pm 1,0)< 2GS.
		$$
		Combining these together and taking $\Lambda(u)=\lambda(u)$ on $(-1,1)$ and $\Lambda(\pm1)=0$ yield \eqref{thm1:proof:eq2}. Finally, if $q_0=0,$ then $u_t=0.$  From Proposition \ref{lem-1}$(i)$, $\sup_{|u|\in [\varepsilon/2,1]}E(t,u,0)<2GS$. Thus, we obtain \eqref{thm1:proof:eq2} by letting $\Lambda=0$ on $[-1,1].$ In conclusion, we have constructed a measurable function $\Lambda$ for each case such that \eqref{thm1:proof:eq2} holds. This finishes our proof.
	\end{proof}

	\section{Proof of the fluctuation of the ground state energy}
	We prove Theorems \ref{thm2} and \ref{thm3}. Note $\xi$ is an even function throughout this section.
	Recall the optimizers $\sigma_t^*$ and $\tau_t^*$ for $H_{N,t}^1$ and $H_{N,t}^2$ from \eqref{more:eq4}. Our proof is based on the result of disorder chaos and the following two identities. 
	The first gives an expression of the variance of $L_N$ in terms of the overlap of $(\sigma_t^*,\tau_t^*)$, which appeared previously in the form of the Ising mixed $p$-spin models in Chatterjee \cite{Chatt13}. The second identity generalizes the first to the covariance between functions of $L_N.$ Recall the constant $\chi$ from Theorem~\ref{thm3}. 
	\begin{lemma}
		\label{id}
		We have the following two identities:
		\begin{itemize}
			\item[$(i)$] $\mbox{Var}(L_N)=N\int_0^1\e \xi(R(\sigma_t^*,\tau_t^*))dt.$
			\item[$(ii)$] Assume $h\neq 0.$ Set $$W_N=\frac{L_N-\e L_N}{\sqrt{\chi N}}\,\,\mbox{and}\,\,	W_{N,t}=\frac{L_N^1-\e L_N^1}{\sqrt{\chi N}}$$
			for $L_N^1:=\max_{\sigma\in S_N}H_{N,t}^1(\sigma)$.
			For any absolutely continuous function $\psi$ with $\|\psi'\|_\infty\leq 2,$ we have
			\begin{align}\label{id:eq2}
			\e W_N\psi(W_N)&=\frac{1}{\chi}\int_0^1\e \bigl(\psi'(W_{N,t})\xi\bigl(R(\sigma_t^*,\tau_t^*)\bigr)\bigr)dt.
			\end{align}
	
		\end{itemize}
	\end{lemma}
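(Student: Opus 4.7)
The plan is to prove both identities by the same interpolation-in-$t$ argument, combining the envelope theorem with Gaussian integration by parts. Set $L_N^1(t):=\max_{\sigma}H_{N,t}^1(\sigma)$ and $L_N^2(t):=\max_{\tau}H_{N,t}^2(\tau)$. Since $H_{N,t}^1$ and $H_{N,t}^2$ each have the same covariance as $H_N$ for every $t\in[0,1]$, we have $L_N^1(t)\stackrel{d}{=}L_N^2(t)\stackrel{d}{=}L_N$, and in particular $\e L_N^1(t)=\e L_N$. Define
$$
\phi(t):=\e[L_N^1(t)L_N^2(t)],\qquad \Psi(t):=\e\bigl[(L_N^2(t)-\e L_N)\psi(W_{N,t})\bigr].
$$
At $t=1$ the two Hamiltonians coincide, so $L_N^1(1)=L_N^2(1)=L_N$ and $W_{N,1}=W_N$; at $t=0$ they involve independent disorders $X_N^1,X_N^2$, so $L_N^1(0)$ and $L_N^2(0)$ are independent. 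Consequently $\phi(1)-\phi(0)=\e L_N^2-(\e L_N)^2=\text{Var}(L_N)$, while $\Psi(0)=0$ (independence plus $\e L_N^2(0)=\e L_N$) and $\Psi(1)=\sqrt{\chi N}\,\e[W_N\psi(W_N)]$. Both identities thus reduce to evaluating $\phi'(t)$ and $\Psi'(t)$.

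For (i), Danskin's envelope theorem yields $\partial_tL_N^1(t)=\frac{X_N(\sigma_t^*)}{2\sqrt{t}}-\frac{X_N^1(\sigma_t^*)}{2\sqrt{1-t}}$, and analogously for $\partial_tL_N^2$. Applying Gaussian integration by parts to each coefficient $g^{0}_{i_1,\ldots,i_p}$ (of $X_N$) and $g^{1}_{i_1,\ldots,i_p}$ (of $X_N^1$) in $\e[\partial_tL_N^1(t)L_N^2(t)]$, the product rule produces two types of terms: (a) terms from $\partial_g(\sigma_t^*)_{i_1}\cdots(\sigma_t^*)_{i_p}$; (b) terms from $\partial_gL_N^2(t)$, where by the envelope theorem $\partial_{g^{0}_{i_1,\ldots,i_p}}L_N^2=\sqrt{t}\,\tfrac{\gamma_p}{N^{(p-1)/2}}(\tau_t^*)_{i_1}\cdots(\tau_t^*)_{i_p}$ while $\partial_{g^{1}}L_N^2=0$. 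The key observation is that $\sigma_t^*$ depends on $(g^{0},g^{1})$ only through the combination $\sqrt{t}\,g^{0}+\sqrt{1-t}\,g^{1}$, giving $\sqrt{1-t}\,\partial_{g^{0}_{i_1,\ldots,i_p}}(\sigma_t^*)_{i_1}\cdots(\sigma_t^*)_{i_p}=\sqrt{t}\,\partial_{g^{1}_{i_1,\ldots,i_p}}(\sigma_t^*)_{i_1}\cdots(\sigma_t^*)_{i_p}$, which forces the (a) contributions from the $X_N$ and $X_N^1$ pieces of $\partial_tL_N^1$ to cancel exactly. The surviving (b) term collapses via $\sum_{i_1,\ldots,i_p}(\sigma_t^*)_{i_1}(\tau_t^*)_{i_1}\cdots(\sigma_t^*)_{i_p}(\tau_t^*)_{i_p}=N^pR(\sigma_t^*,\tau_t^*)^p$ and the definition of $\xi$ into $\tfrac{N}{2}\e\xi(R(\sigma_t^*,\tau_t^*))$; a symmetric computation for $\e[L_N^1(t)\partial_tL_N^2(t)]$ doubles this, giving $\phi'(t)=N\,\e\xi(R(\sigma_t^*,\tau_t^*))$. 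Integrating in $t$ produces (i).

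For (ii), I would repeat the scheme on $\Psi'(t)=\tfrac{d}{dt}\e[L_N^2(t)\psi(W_{N,t})]$ (the subtracted piece $\e L_N\cdot\e\psi(W_{N,t})$ is $t$-independent since $W_{N,t}\stackrel{d}{=}W_N$). The chain rule gives $\Psi'(t)=\e[\partial_tL_N^2\,\psi(W_{N,t})]+\tfrac{1}{\sqrt{\chi N}}\e[L_N^2(t)\psi'(W_{N,t})\partial_tL_N^1]$. Running Gaussian integration by parts on each piece again yields (a) and (b) terms as before; in the second piece the differentiation also produces (c) new $\psi''(W_{N,t})$-terms from $\psi'(W_{N,t})$. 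The same $\sqrt{t}\,g+\sqrt{1-t}\,g'$ symmetry that killed the (a) terms also makes the (c) terms cancel between the $X_N$ and $X_N^1$ contributions, since $\partial_{g^{0}_{i_1,\ldots,i_p}}W_{N,t}=\sqrt{t/(1-t)}\,\partial_{g^{1}_{i_1,\ldots,i_p}}W_{N,t}$; the argmax-derivative (a) terms cancel as in (i). What remains in each piece is exactly $\tfrac{N}{2\sqrt{\chi N}}\,\e[\psi'(W_{N,t})\xi(R(\sigma_t^*,\tau_t^*))]$, so $\Psi'(t)=\sqrt{N/\chi}\,\e[\psi'(W_{N,t})\xi(R(\sigma_t^*,\tau_t^*))]$. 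Integrating and dividing by $\sqrt{\chi N}$ gives (ii).

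The main technical obstacle is that $L_N^i(t)$ is only Lipschitz, not globally differentiable, in the Gaussian disorder, so both the envelope formula for $\partial_tL_N^i$ and the componentwise identities for $\partial_gL_N^i$ require justification. I would handle this by smoothing: approximate $L_N^i(t)$ by the soft-max $\beta^{-1}\log\int\exp(\beta H_{N,t}^i(\sigma))\,dm_N(\sigma)$, on which every manipulation above is routine (derivatives become $\beta$-Gibbs averages of monomials $\sigma_{i_1}\cdots\sigma_{i_p}$), and then pass to the limit $\beta\to\infty$ using standard Gaussian concentration (Borell--TIS) together with the a.s.\ uniqueness of the argmax --- unique up to a global sign in the even model with $h=0$, which leaves $\xi(R(\sigma_t^*,\tau_t^*))$ unchanged because $\xi$ is even. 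This passage justifies the use of $\sigma_t^*,\tau_t^*$ in the limiting identities.
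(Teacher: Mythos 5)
Your proposal is correct and, once rigorously executed, follows essentially the same route as the paper. The paper states the covariance-interpolation identity for smooth functions of Gaussian vectors, applies it directly to $A=B=\log Z_N(\beta)$ (and to $A=W_N(\beta)$, $B=\psi(W_N(\beta))$ for part (ii)), and then sends $\beta\to\infty$, using the convergence of the product Gibbs measure to the (possibly sign-symmetrized) Dirac mass at $(\sigma_t^*,\tau_t^*)$ and the evenness of $\xi$. What you do differently is present the argument first at $\beta=\infty$, re-deriving the interpolation formula in situ via Danskin plus componentwise Gaussian integration by parts and explicit cancellation of the argmax-derivative and $\psi''$ terms through the $\sqrt{t}\,g+\sqrt{1-t}\,g'$ symmetry --- but you then acknowledge that all of this is formal and propose to rigorize exactly by replacing $L_N$ with the softmax $\beta^{-1}\log Z_N(\beta)$ and passing to the limit, which is precisely the paper's computation. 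So the two proofs differ only in presentation: you expose the mechanism (and the cancellations) at zero temperature first, while the paper packages the cancellation inside the cited interpolation identity and works at finite $\beta$ from the start. One small point worth noting: your formal zero-temperature derivation of (ii) produces $\psi''$-terms, so implementing it literally would also require mollifying $\psi$; the paper's version via the cross-derivative covariance formula only ever differentiates $\psi$ once, though it still invokes a mollifier at the end to cover general absolutely continuous $\psi$.
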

	
	\begin{proof}	To prove both identities, we need a technical formula for the covariance of functions of Gaussian random vectors. Let $w,w_1,w_2$ be i.i.d. centered Gaussian vectors on $\mathbb{R}^n$ with covariance $C=(C_{j,j'}).$ For $0\leq t\leq 1,$ define
		$$
		w^1(t)=\sqrt{t}w+\sqrt{1-t}w_1\,\,\mbox{and}\,\,w^2(t)=\sqrt{t}w+\sqrt{1-t}w_2.
		$$
		Let $A,B:\mathbb{R}^n\rightarrow\mathbb{R}$ be absolutely continuous functions with 
		$$
		\e \|\triangledown A(w)\|_2^2<\infty\,\,\mbox{and}\,\,	\e \|\triangledown B(w)\|_2^2<\infty.
		$$  
		From Gaussian integration by parts, we have
		\begin{align}\label{lem:id:proof:eq1}
		\e A(w)B(w)-\e A(w)\e B(w)&=\int_0^1\sum_{j,j'=1}^nC_{j,j'} \e \bigl[\partial_jA(w^1(t))\partial_{j'}B(w^2(t))\bigr]dt.
		\end{align}
		Recall the partition function $Z_N(\beta)$ and the uniform probability measure $m_N$ on $S_N$ from \eqref{more:eq5}. Let $Z_N^1(\beta)$ and $Z_N^2(\beta)$ be the partition functions associated to $H_{N,t}^1$ and $H_{N,t}^2$, respectively. Define two Gibbs measures by
		\begin{align*}
		G_{t,\beta}^1(d\sigma)&=\frac{\exp \beta H_{N,t}^1(\sigma)m_N(d\sigma)}{Z_{t}^1(\beta)}\,\,\mbox{and}\,\,G_{t,\beta}^2(d\tau)=\frac{\exp \beta H_{N,t}^2(\tau)m_N(d\tau)}{Z_{t}^2(\beta)}.
		\end{align*}
	Denote by $\la\cdot\ra_{t,\beta}$ be the Gibbs expectation with respect to $G_{t,\beta}^1\times G_{t,\beta}^2$. Since $\log Z_N(\beta)$ is a smooth function of i.i.d. Gaussian random variables $(g_{i_1,\ldots,i_p})$, we may use \eqref{lem:id:proof:eq1} with $A=B=\log Z_N(\beta)$ to obtain 
		\begin{align}\label{lem:id:proof:eq2}
		\mbox{Var}(\log Z_N(\beta))&=N\int_0^1\e \bigl\la \xi_\beta(R(\sigma,\tau))\bigr\ra_{t,\beta}dt.
		\end{align}
		Here even though the Hamiltonian $H_{N}$ may involve infinitely many $g_{i_1,\ldots,i_p}$'s, this equation still can be verified using an approximation argument by truncating the series $\xi$ by finite $p.$ Now dividing \eqref{lem:id:proof:eq2} by $\beta^2$, we obtain
		\begin{align}\label{lem:id:proof:eq3}
		\mbox{Var}\Bigl(\frac{1}{\beta}\log Z_N(\beta)\Bigr)&=N\int_0^1\e \bigl\la \xi(R(\sigma,\tau))\bigr\ra_{t,\beta}dt.
		\end{align}
		Since $\lim_{\beta\rightarrow\infty}\beta^{-1}\log Z_N(\beta)=L_N,$ the left-hand side has the limit $\mbox{Var}(L_N)$ as $\beta\rightarrow\infty.$ To see the limit on the right-hand side, we observe that  $X_N,X_N^1,X_N^2$ are linear combinations of independent even $p$-spin interactions. If $h=0,$ then both $H_{N,t}^1$ and $H_{N,t}^2$  have exactly two optimizers $\pm\sigma_t^*$ and $\pm\tau_t^*$, respectively. In this case, the Gibbs measure $G_{t,\beta}^1\times G_{t,\beta}^2$ will converge to a uniform probability measure on four points $(\pm \sigma_t^*,\pm\tau_t^*)$ when $\beta\rightarrow\infty.$ As a result,
		\begin{align*}
		\lim_{\beta\rightarrow\infty}\bigl\la \xi(R(\sigma,\tau))\bigr\ra_{t,\beta}&=\frac{1}{4}\bigl(\xi(R(\sigma_t^*,\tau_t^*))+\xi(R(-\sigma_t^*,\tau_t^*))+\xi(R(\sigma_t^*,-\tau_t^*))+\xi(R(-\sigma_t^*,-\tau_t^*))\bigr)\\
		&=\xi \bigl(R(\sigma^*,\tau^*)\bigr),
		\end{align*}
		where the last equation used the fact that $\xi$ is even. On the other hand, if $h\neq 0,$ then $\sigma_t^*$ and $\tau_t^*$ are the unique maximizers of $H_{N,t}^1$ and $H_{N,t}^2,$ in which case, the Gibbs measure $G_{t,\beta}^1\times G_{t,\beta}^2$ will converge to a Dirac measure at $(\sigma_t^*,\tau_t^*)$ when $\beta\rightarrow\infty.$ Consequently, we again have
		\begin{align}\label{more:eq6}
		\lim_{\beta\rightarrow\infty}\bigl\la \xi(R(\sigma,\tau))\bigr\ra_{t,\beta}=\xi \bigl(R(\sigma_t^*,\tau_t^*)\bigr).
		\end{align}
		Therefore, by the bounded convergence theorem, the right-hand side of \eqref{lem:id:proof:eq3} has the limit $$
		\int_0^1\e\xi\bigl(R(\sigma_t^*,\tau_t^*)\bigr)dt.$$
		This completes the proof of item $(i)$.
		
		Item $(ii)$ can be justified in a similar way. Assume $h\neq 0.$ Note that $\chi$ is positive since $u_t>0$ for all $t\in(0,1)$ by Theorem \ref{thm4}. Set 
		$$
		W_N(\beta)=\frac{\log Z_N(\beta)-\e \log Z_N(\beta)}{\beta\chi\sqrt{N}}.
		$$
		If $\psi'$ is continuous on $\mathbb{R}$, letting $A=W_N(\beta)$ and $B=\psi(W_N(\beta))$, one can argue that
		\begin{align*}
		\e W_N(\beta)\psi(W_N(\beta))-\e W_N(\beta) \e \psi(W_N)=\frac{1}{\chi}\int_0^1 \e \psi'(W_{N,t}(\beta))\la\xi(R(\sigma,\tau)) \ra_{t,\beta}dt,
		\end{align*}
		 where $W_{N,t}(\beta)$ is defined by replacing the $H_N$ in $W_N(\beta)$ by $H_{N,t}^1.$ 
		Note that $\e W_N(\beta)=0$. Passing to limit $\beta\rightarrow\infty$, the last equation and \eqref{more:eq6} together lead to \eqref{id:eq2}, If $\psi'$ is not continuous on $\mathbb{R}$, we may adapt an approximation argument to get $(ii)$ by using mollifier since $\|\psi'\|_\infty<\infty.$		
	\end{proof}

	\begin{proof}[\bf Proof of Theorem \ref{thm2}]
	Note that the assumption $h=0$ implies $u_t=0$ for all $0<t<1$ by Proposition~\ref{lem6}. From Theorem \ref{thm4} and the dominated convergence theorem, 
	\begin{align*}
	\lim_{N\rightarrow\infty}\e\xi(R(\sigma_t^*,\tau_t^*))=0.
	\end{align*}
	Using this and Lemma \ref{id}$(i)$, the result follows immediately by using the bounded convergence theorem.
\end{proof}

Finally we establish the central limit theorem for the ground state energy when $h\neq 0$ via Stein's method \cite{stein}. Recall that if $W$ is a standard normal random variable, then it satisfies the Gaussian integration by part formula, $\e W\psi(W)=\e \psi'(W)$, for any absolutely continuous function of moderate growth. Stein's method essentially utilizes the idea that if $\e W\psi(W)\approx \e\psi'(W)$ for certain class of functions, then $W$ is approximately standard normal. Specifically, to quantify the distance between $W$ and a standard normal random variable, one usually adapts Steins lemma \cite[Page 25]{stein}, which in our setting reads
\begin{align*}
d_{TV} (W_N,g )&\leq \sup\bigl\{\bigl|\e W_N\psi(W_N)-\e\psi'(W_N)\bigr|:\|\psi'\|_\infty\leq 2\bigr\}
\end{align*}
for $g$ a standard normal random variable. Now the idea is that since $R(\sigma_t^*,\tau_t^*)\approx u_t$ for all $t\in(0,1)$ by the result on chaos in disorder, the identity \eqref{id:eq2} leads to
$$
\e W_N\psi(W_N)\approx \frac{1}{\chi}\int_0^1\e \psi'(W_{N,t})\xi(u_t)dt=\e\psi'(W_N),
$$
which means that the total variance distance between $W_N$ and $g$ is asymptotically zero. This approach was originally used in \cite{Chatt091} to prove second order Poincar\'e inequalities and was recently applied to establish the central limit theorem for the free energy in the Ising mixed even $p$-spin models \cite{cpp}.

\begin{proof}[\bf Proof of Theorem \ref{thm3}]
    Assume $h\neq 0.$ Note that $u_t>0$ for all $t\in(0,1)$ by Proposition \ref{lem6}.
    From Theorem \ref{thm4}, the bounded convergence theorem yields
	\begin{align}\label{proof:thm3:eq1}
	\lim_{N\rightarrow\infty}\e \xi (R(\sigma_t^*,\tau_t^*))=\xi(u_t)
	\end{align}
	for all $t\in(0,1).$
    Since $\e\psi'(W_{N,t})=\e\psi'(W_{N})$ for all $t$, the definition of $\chi$ in Theorem \ref{thm3} and Lemma~\ref{id}$(ii)$ lead to 
	\begin{align*}
	\e W_{N}\psi(W_{N})-\e\psi'(W_{N})&=\frac{1}{\chi}\int_0^1\! \e\psi'(W_{N,t}^1)\bigl(\xi(R(\sigma_t^*,\tau_t^*))-\xi(u_t)\bigr)\, dt
	\end{align*}
	and, therefore, for any $0<\delta<1/2,$
	\begin{align*}
	\bigl|\e W_{N}\psi(W_{N})-\e\psi'(W_{N})\bigr|&\leq \frac{\|\psi'\|_\infty}{\chi}\int_\delta^{1-\delta}\! \e\bigl|\xi(R(\sigma_t^*,\tau_t^*))-\xi(u_t)| \,dt+\frac{4\delta\xi(1)\|\psi'\|_\infty}{\chi}.
	\end{align*}
	By Stein's lemma \cite[Page 25]{stein}, we obtain that
	\begin{align*}
	d_{\mathrm{TV}}(W_{N},g)&\leq \frac{2}{\chi}\int_\delta^{1-\delta}\! \e\bigl|\xi(R(\sigma_t^*,\tau_t^*))-\xi(u_t)\bigr|\,dt+\frac{8\delta\xi(1)}{\chi}.
	\end{align*}
	Using \eqref{proof:thm3:eq1} for $t\in[\delta,1-\delta]$ and the bounded convergence theorem yields
	\begin{align*}
	\limsup_{N\rightarrow\infty} d_{\text{TV}}(W_{N},g) \leq \frac{8\delta\xi(1)}{\chi}.
	\end{align*}
	Letting $\delta\downarrow 0$ finishes the proof.
	\end{proof}
	
\appendix
\setcounter{secnumdepth}{0}
\section{Appendix}

	\noindent This appendix is devoted to establishing the regularity properties of the Gaussian Hamiltonian $X_N$ as well as the proofs of Lemmas \ref{lem:dis_correlated_field} and \ref{extralem} by using the Dudley entropy integral \cite[Equation $(1.5)$]{Tal14}. For a metric space $(T,\rho)$, denote the $\delta$-covering number by $\mathcal{N}(T,\rho;\delta)$, i.e., the smallest number of open balls of radius $\delta$ that is needed in order to cover $T.$ Denote by $\|\cdot\|_2$ the Euclidean distance on $\mathbb{R}^N.$
	
	\begin{lemma}\label{lem0}
		There exists a constant $C>0$ independent of $\xi$ and $N$ such that for any $N \ge 1$ and for any $0<\delta\leq 2,$ 
		\begin{align*}
		\e \max_{\sigma,\tau\in S_N:\|\sigma-\tau\|_2\leq \delta N^{1/2}}|X_N(\sigma)-X_N(\tau)|\leq C\eta \delta N
		\end{align*}
		for all $N\geq 1,$ where $\eta:= \sqrt{\xi'(1)}$.
	\end{lemma}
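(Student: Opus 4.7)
The proof plan is to apply Dudley's entropy integral to the Gaussian process $X_N$ on $S_N$, with respect to its intrinsic pseudo-metric $d(\sigma,\tau) := \sqrt{\e(X_N(\sigma)-X_N(\tau))^2}$. First I would verify the key Lipschitz comparison $d(\sigma,\tau)\leq \eta\|\sigma-\tau\|_2$ on $S_N$. Since $\e(X_N(\sigma)-X_N(\tau))^2 = 2N(\xi(1)-\xi(R(\sigma,\tau)))$ and since $\xi'$ is nondecreasing on $[-1,1]$ (because $\xi''\geq 0$), one has $\xi(1)-\xi(r)=\int_r^1 \xi'(s)\,ds \leq \xi'(1)(1-r)$. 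Combining with the identity $1-R(\sigma,\tau) = \|\sigma-\tau\|_2^2/(2N)$ gives the claim.

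Next I would use the standard volume-based covering estimate $\mathcal{N}(S_N,\|\cdot\|_2;\varepsilon) \leq (3\sqrt{N}/\varepsilon)^N$ for $0<\varepsilon\leq \sqrt{N}$, which together with the Lipschitz comparison yields $\mathcal{N}(S_N,d;\varepsilon) \leq \mathcal{N}(S_N,\|\cdot\|_2;\varepsilon/\eta) \leq (3\eta\sqrt{N}/\varepsilon)^N$. Then I would invoke Dudley's inequality for the modulus of continuity of a centered Gaussian process: there is a universal $K>0$ such that for any $u>0$,
$$\e\sup_{d(\sigma,\tau)\leq u}|X_N(\sigma)-X_N(\tau)| \leq K\int_0^u \sqrt{\log \mathcal{N}(S_N,d;\varepsilon)}\,d\varepsilon.$$
Since $\|\sigma-\tau\|_2\leq \delta\sqrt{N}$ forces $d(\sigma,\tau)\leq \eta\delta\sqrt{N}$, setting $u=\eta\delta\sqrt{N}$ and substituting $\varepsilon = \eta\sqrt{N}\,s$ reduces the right-hand side to $K\eta N\int_0^\delta \sqrt{\log(3/s)}\,ds$.

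The final step is to bound $\int_0^\delta \sqrt{\log(3/s)}\,ds$ in terms of $\delta$ on $(0,2]$. Integration by parts gives it as $\delta\sqrt{\log(3/\delta)}+\tfrac{1}{2}\int_0^\delta ds/\sqrt{\log(3/s)}$, which is of order $\delta(\sqrt{\log(3/\delta)}+1)$; on any fixed compact subinterval of $(0,2]$ the logarithmic factor is absorbed into the universal constant, yielding the stated bound $C\eta\delta N$ with $C$ independent of $\xi$ and $N$. The main obstacle is precisely this last step, namely the mild $\sqrt{\log(3/\delta)}$ correction carried by the naive Dudley estimate as $\delta\downarrow 0$; it is harmless in the downstream applications to Lemmas~\ref{lem:dis_correlated_field} and~\ref{extralem} (where $\delta$ will be of order $\kappa^{1/2}$ for a small but fixed $\kappa$), but a fully log-free bound valid for all $\delta\in(0,2]$ simultaneously would require a sharper input, e.g.\ controlling the $X_N$-Lipschitz constant on $S_N$ through a separate Dudley bound on the vector-valued Gaussian field $\nabla X_N$.
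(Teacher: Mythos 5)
Your route is essentially the one the paper follows: the intrinsic pseudo-metric $d(\sigma,\tau)=\bigl(\e|X_N(\sigma)-X_N(\tau)|^2\bigr)^{1/2}$, the comparison $d\leq\eta\|\cdot\|_2$, Dudley's entropy integral, the volumetric covering bound for the rescaled sphere, and the same rescaling of the entropy integral to extract the factor $\eta N$. You have, in addition, actually verified the Lipschitz comparison rather than merely asserting it; the paper states it without proof, and your argument via $\xi(1)-\xi(r)=\int_r^1\xi'(s)\,ds\leq \xi'(1)(1-r)$ together with $1-R(\sigma,\tau)=\|\sigma-\tau\|_2^2/(2N)$ is the intended one (using $|\xi'(s)|\leq\xi'(1)$ on $[-1,1]$).

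The one place you stall, the residual $\sqrt{\log(3/\delta)}$ factor coming out of Dudley, is a real feature of the estimate and not an artifact of your method. After the paper's change of variables $v=4\eta N^{1/2}/u$, its Dudley integral becomes $4\eta N\int_{4/\delta}^\infty v^{-2}\sqrt{\log v}\,dv$, and integration by parts shows this is asymptotically $\eta N\delta\sqrt{\log(4/\delta)}$ as $\delta\downarrow 0$; the paper's final line then silently absorbs the $\sqrt{\log}$ into $C$, so the constant in the lemma is in fact $\delta$-dependent (only independence of $\xi$ and $N$ is being claimed). Your reading is exactly right: the bound with a $\delta$-uniform constant holds only on compact subintervals of $(0,2]$, and this is harmless in Lemmas~\ref{lem:dis_correlated_field} and~\ref{extralem} because the limits $N\to\infty$ and $\beta\to\infty$ are taken before $\delta,\kappa\downarrow 0$ and $\delta\sqrt{\log(1/\delta)}\to 0$ anyway. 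There is no missing step on your side; you have simply been more explicit than the paper about a point the paper elides.
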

	
	\begin{remark}\label{rmk2}\rm
		Fix $\sigma_0\in S_N$. Since any point $\sigma\in S_N$ satisfies  $\|\sigma-\sigma_0\|_2\leq 2 N^{1/2}$, applying Lemma \ref{lem0} with $\delta=2$ implies for all $N\geq 1,$
		\begin{align*}
		\frac{1}{N}\e \max_{\sigma\in S_N}X_N(\sigma)&\leq \frac{1}{N}\e \max_{\sigma\in S_N:\|\sigma-\sigma_0\|_2\leq 2N^{1/2}}|X_N(\sigma)-X_N(\sigma_0)| \le 2C \eta =: C_\xi.
		\end{align*}	
		
	\end{remark}
	
	\begin{proof}[\bf Proof of Lemma \ref{lem0}]
		Consider the metric \begin{align*}
		d(\sigma,\tau)&={\bigl(\e|X_N(\sigma)-X_N(\tau)|^2\bigr)}^{1/2}=(2N)^{1/2}\bigl(\xi(1)-\xi(R(\sigma,\tau))\bigr)^{1/2}.
		\end{align*}
		Note that $
		d(\sigma,\tau)\leq \eta\|\sigma-\tau\|_2.$
		Since $X_N$ is centered and
		\begin{align*}
		\log \e e^{\lambda(X_N(\sigma)-X_N(\sigma'))}&=\frac{\lambda^2 d(\sigma,\sigma')^2}{2},\,\,\forall \lambda,
		\end{align*}
		the Dudley's entropy integral (see \cite[Equation $(1.5)$]{Tal14}) implies 
		\begin{align}
		\begin{split}
		&\e\max_{\sigma,\tau\in S_N:\|\sigma-\tau\|_2\leq \delta N^{1/2}}|X_N(\sigma)-X_N(\tau)|\\\notag
		&\leq \e\max_{\sigma,\tau\in S_N:d(\sigma,\tau)\leq \eta\delta  N^{1/2}}|X_N(\sigma)-X_N(\tau)|\notag
		\end{split}\\
		\begin{split}\label{dudley} 
		&\leq C'\int_0^{\eta\delta N^{1/2}}\sqrt{\log\mathcal{N}(S_N,d;u)}du
		\end{split}
		\end{align}
		for some constant $C'>0$ independent of $\xi$ and $N.$ 
		Note that $N^{-1}S_N$ is the unit $(N-1)$-sphere with respect to $\|\cdot\|_2$, for which it is well-known (see e.g. \cite[Lemma 5.2]{RV}) that 
		\begin{align}\label{lem:discrete:proof:eq1}
		\mathcal{N}(N^{-1/2}S_N,\|\cdot\|_2;u)&\leq \Bigl(\frac{4}{u}\Bigr)^N,\,\,\forall 0<u\leq 2.
		\end{align}
		Consequently,
		\begin{align*}
		\mathcal{N}(S_N,d;u)&\leq \Bigl(\frac{4\eta N^{1/2}}{u}\Bigr)^N,\,\,\forall 0<u\leq 2 \eta \sqrt{N}
		\end{align*}
		and from \eqref{dudley} and using change of variable $v=4\eta N^{1/2}u^{-1}$, the announced statement holds by
		\begin{align*}
		\e\max_{\sigma,\tau\in S_N:\|\sigma-\tau\|_2\leq \delta N^{1/2}}|X_N(\sigma)-X_N(\tau)|&\leq C'\int_0^{\eta\delta N^{1/2}}N^{1/2}\Bigl(\log\frac{4\eta N^{1/2}}{u}\Bigr)^{1/2} du\\
		&=4\eta C' N\int_{4/\delta}^\infty\frac{(\log v)^{1/2}}{v^2}dv\\
		&\leq C\eta\delta N,
		\end{align*}
		for some constant $C$ independent of $\xi$ and $N.$
	\end{proof}

	Recall the definitions of ${A}_r(u),\mathcal{A}_r(u)$ from \eqref{thm1:eq2} and ${A}_r^{r'}(u),\mathcal{A}_r^{r'}(u)$ from \eqref{thm1:eq3}. In addition, recall the coupled Hamiltonian $H_{N,t}(\sigma,\tau)$ and measure $m_N(\sigma,\tau)$ from \eqref{hamilton2}. Define $\|(x,y)\|=\max(\|x\|_2,\|y\|_2)$ for $x,y\in\mathbb{R}^N.$
	
	\begin{proof}[\bf Proof of Lemma \ref{lem:dis_correlated_field}] This lemma can be established by following closely the discretization arguments  presented in the proof of Lemma~\ref{lem:discrete}.
		Denote
		$$
		X_{N,t}(\sigma,\tau)=X_{N,t}^1(\sigma)+X_{N,t}^2(\tau)=\sqrt{t}\bigl(X_N(\sigma)+X_N(\tau)\bigr)+\sqrt{1-t}\bigl(X_N^1(\sigma)+X_N^2(\tau)\bigr).
		$$ 
		Consider the metric
		$$
		d_t\bigl((\sigma^1,\tau^1),(\sigma^2,\tau^2)\bigr)=\bigl(\e |X_{N,t}(\sigma^1,\tau^1)-X_{N,t}(\sigma^2,\tau^2)|^2\bigr)^{1/2}.
		$$
		Since
		\begin{align}
		\begin{split}
		\label{eq-13}
		| X_{N, t}(\sigma^1, \tau^1)  - X_{N, t}(\sigma^2, \tau^2)|&\leq |X_N(\sigma^1)-X_N(\sigma^2)|+|X_N(\tau^1)-X_N(\tau^2)|\\
		&+|X_N^1(\sigma^1)-X_N^1(\sigma^2)|+|X_N^2(\tau^1)-X_N^2(\tau^2)|,
		\end{split}
		\end{align}
		it is easy to see by using the Minkowski inequality that
		\begin{align*}
		d_t\bigl((\sigma^1,\tau^1),(\sigma^2,\tau^2)\bigr)&\leq  4\eta \|(\sigma^1-\sigma^2,\tau^1-\tau^2)\|,
		\end{align*}
		where $\eta=\sqrt{\xi'(1)}$. Let $0<\delta<\min\bigl(\varepsilon,\kappa\bigr)/8.$ 
		Define
		\begin{align*}
		M_{N,t,\delta}&=\max | H_{N, t}(\sigma^1, \tau^1)  - H_{N, t}(\sigma^2, \tau^2)|,
		\end{align*}
		where the supremum above is taken over all pairs $(\sigma^1, \tau^1), (\sigma^2, \tau^2)  \in S_N\times S_N$ with  $$
		\| (\sigma^1-\sigma^2, \tau^1-\tau^2)\| \le \delta N^{1/2}.$$  Applying Lemma \ref{lem0} to the right-hand side of \eqref{eq-13} gives 
		\begin{align}
		\label{lem-1:eq1}
		\e M_{N,t,\delta}  \le\eta_0  \delta N
		\end{align}
		for some $\eta_0>0$. For any $(\sigma,\tau)\in S_N\times S_N$ and $r>0,$ set $$
		B_{N,r}(\sigma,\tau)=\bigl\{(\sigma',\tau')\in S_N\times S_N:\|(\sigma'-\sigma,\tau'-\tau)\|<rN^{1/2}\bigr\}.
		$$
		Let $\mathcal{D}_{N,\delta}$ be the collection of points in $S_N\times S_N$ with smallest cardinality such that the open balls $B_{N,\delta}(\sigma,\tau)$ for $(\sigma,\tau)\in\mathcal{D}_{N,\delta}$ form a covering of $(S_N\times S_N,\|\cdot\|).$ Note that from \eqref{lem:discrete:proof:eq1}, 
		\begin{align}
		\label{eq-3}
		|\mathcal{D}_{N,\delta}|=\mathcal{N}( S_N \times S_N, \| \cdot \|; \delta N^{1/2}) \le \left(\tfrac{4}{\delta}\right)^{2N}.
		\end{align} 
			Let $B_{N,\delta}:= m_N(B_{N,\delta}(\sigma,\tau)) $ be the weight of  each ball $B_{N,\delta}(\sigma,\tau)$. Since $m_N(S_N\times S_N)=1$, the inequality \eqref{eq-3} implies that
			\begin{align}
			\label{more:eq7}
			B_{N,\delta}\geq (4^{-1} \delta)^{2N}
			\end{align}
		Let $\mathcal{D}_{N,\delta}'$ be the collection of points $(\sigma,\tau)\in \mathcal{D}_{N,\delta}$ such that $B_{N,\delta}(\sigma,\tau)\cap  \mathcal{A}_{\varepsilon}^\kappa(u)\neq \emptyset.$
		Now on the one hand, using \eqref{lem-1:eq1}, 
		\begin{align}\label{eq-4}
		\frac{1}{N}\e \max_{\mathcal{A}_{\varepsilon}^\kappa(u)}H_{N,t}(\sigma,\tau)\leq \frac{1}{N}\e\max_{\mathcal{D}_{N,\delta}'}H_{N,t}(\sigma,\tau)+\eta_0 \delta.
		\end{align}
		On the other hand, observe that since $\delta<\min(\varepsilon,\kappa)/8$ and $B_{N,\delta}(\sigma,\tau)\cap \mathcal{A}_{\varepsilon}^\kappa(u)\neq \emptyset$ for all $(\sigma,\tau)\in \mathcal{D}_{N,\delta}'$, by the triangle inequality and the identity, 
		\begin{align}
		\label{id3}
		2N\bigl(1-R(\sigma,\tau)\bigr)=\|\sigma-\tau\|_2^2,\,\,\forall\sigma,\tau\in S_N,
		\end{align}
		it follows that $$
		B_{N,\delta}(\sigma,\tau)\subset \mathcal{A}_{\varepsilon/2}^{\kappa/2}(u),\,\,\forall (\sigma,\tau)\in \mathcal{D}_{N,\delta}'.
		$$ 
	    Consequently,
		\begin{align*}
		&\frac{1}{N}\e\max_{\mathcal{D}_{N,\delta}'}H_{N,t}(\sigma,\tau)\\
		&\leq \frac{1}{N\beta}\e\log\Bigl(\sum_{(\sigma,\tau)\in \mathcal{D}_{N,\delta}'}\frac{1}{m_N(B_{N,\delta}(\sigma,\tau))}\int_{B_{N,\delta}(\sigma,\tau)}e^{\beta H_{N,t}(\sigma,\tau)}dm_N(\sigma',\tau')\Bigr)\\
		&\leq\frac{1}{N\beta}\e\log\Bigl(\sum_{(\sigma,\tau)\in \mathcal{D}_{N,\delta}'}\frac{e^{\beta M_{N,t,\delta}}}{B_{N,\delta}}\int_{B_{N,\delta}(\sigma,\tau)}e^{\beta H_{N,t}(\sigma',\sigma')}dm_N(\sigma',\tau')\Bigr)\\
		&=\frac{\e M_{N,t,\delta}}{N} -\frac{\log B_{N,\delta}}{N\beta}+\frac{1}{N\beta}\e\log \Bigl(\sum_{(\sigma,\tau)\in\mathcal{D}_{N,\delta}'}\int_{B_{N,\delta}(\sigma,\tau)}e^{\beta H_{N,t}(\sigma',\tau')}dm_N(\sigma',\tau')\Bigr)\\
		&\leq \frac{\e M_{N,t,\delta}}{N}-\frac{\log B_{N,\delta}}{N\beta}+\frac{\log|\mathcal{D}_{N,\delta}'|}{N\beta}+\frac{1}{N\beta}\e\log \int_{\mathcal{A}_{\varepsilon/2}^{\kappa/2}(u)}e^{\beta H_{N,t}(\sigma',\tau')}dm_N(\sigma',\tau').
		\end{align*}
		From \eqref{lem-1:eq1}, \eqref{eq-3}, \eqref{more:eq7}, and \eqref{eq-4}, letting $N\rightarrow\infty$, $\beta\rightarrow \infty$ and then $\delta\rightarrow 0,$ the inequality \eqref{lem:dis_correlated_field:eq1} follows.
	\end{proof}
	
	\begin{proof}[\bf Proof of Lemma \ref{extralem}]
		We first claim that for any $(\sigma,\tau)\in\mathcal{A}_\varepsilon(u)$, there exists some $(\sigma',\tau')\in \mathcal{A}_{\varepsilon}^\kappa(u)$ such that
		\begin{align}
		\label{cond:eq1}
		\|(\sigma-\sigma',\tau-\tau')\|\leq (8\kappa N)^{1/2}.
		\end{align}
		Obviously, this inequality holds if $(\sigma,\tau)\in \mathcal{A}_\varepsilon^\kappa(u)$. So, we assume that $(\sigma,\tau)\notin \mathcal{A}_\varepsilon^\kappa(u)$. Then 
		$$
		\mbox{either $R(\sigma,\tau)>1-\kappa$ or $R(\sigma,\tau)<-1+\kappa,$}
		$$
		or equivalently, from the identity \eqref{id3},
		$$
		\mbox{either $\|\sigma-\tau\|_2<(2\kappa N)^{1/2}$ or $\|\sigma+\tau\|_2<(2\kappa N)^{1/2}.$}
		$$
		In the first case, we choose $(\sigma',\tau')$ by letting $\sigma'=\sigma$ and taking some $\tau'\in S_N$ that satisfies $\|\sigma-\tau'\|_2=(2\kappa N)^{1/2}.$ Consequently, the inequality \eqref{cond:eq1} holds since
		\begin{align*}
		\|(\sigma-\sigma',\tau-\tau')\|&=\|\tau-\tau'\|_2\\
		&\leq \|\tau-\sigma\|_2+\|\sigma-\tau'\|_2\\
		&\leq 2(2\kappa N)^{1/2}\\
		&=(8\kappa N)^{1/2}.
		\end{align*}
		In addition,  from the identity \eqref{id3} again,
		\begin{align*}
		R(\sigma',\tau')&=1-\kappa
		\end{align*}
		and from the conditions \eqref{cond},
		\begin{align*}
		R(\sigma',\tau')&= 1-\kappa\geq 1-(1-u-\varepsilon)=u+\varepsilon.
		\end{align*}
		We conclude that $(\sigma',\tau')\in\mathcal{A}_\varepsilon^\kappa(u)$. 
		Similarly, if we are in the second case, we choose $(\sigma',\tau')$ with $\sigma'=\sigma$ and some $\tau'\in S_N$ satisfying $\|\sigma+\tau'\|_2=(2\kappa N)^{1/2}.$ With this choice, the inequality \eqref{cond:eq1} follows since
		\begin{align*}
		\|(\sigma-\sigma',\tau-\tau')\|&=\|\tau-\tau'\|_2\\
		&\leq \|\tau+\sigma\|_2+\|\sigma+\tau'\|_2\\
		&\leq 2(2\kappa N)^{1/2}\\
		&=(8\kappa N)^{1/2}.
		\end{align*}
		It is also clear that $(\sigma',\tau')\in\mathcal{A}_\varepsilon^\kappa(u)$ since
		\begin{align*}
		R(\sigma',\tau')&=-1 + \kappa
		\end{align*}
		and from the conditions \eqref{cond},
		\begin{align*}
		R(\sigma',\tau')&=-1 + \kappa \leq (1+u-\varepsilon)-1=u-\varepsilon.
		\end{align*}
		Therefore, these two cases finish the proof of our claim and consequently, from \eqref{cond:eq1},
		\begin{align*}
		\frac{1}{N}\e \max_{\mathcal{A}_\varepsilon(u)}H_{N,t}(\sigma,\tau)&\leq \frac{1}{N}\e \max_{\mathcal{A}_\varepsilon^\kappa(u)}H_{N,t}(\sigma,\tau)+\frac{1}{N}\e \max_{\|(\sigma-\sigma',\tau-\tau')\|\leq (8\kappa N)^{1/2}}\bigl(H_{N,t}(\sigma,\tau)-H_{N,t}(\sigma',\tau')\bigr).
		\end{align*} 
		From Lemma \ref{lem0} and \eqref{eq-13}, the claim \eqref{lem:dis_correlated_field:eq2} follows since the second term on the right-hand side is bounded above by
		$
		4\eta(8\kappa)^{1/2}C+2(8\kappa)^{1/2}.
		$
	\end{proof}

\end{document}